\newcommand{\erase}[1]{}
\theoremstyle{plain}
\newtheorem{theorem}{Theorem}[section]
\newtheorem{lemma}[theorem]{Lemma}
\newtheorem{proposition}[theorem]{Proposition}
\newtheorem{corollary}[theorem]{Corollary}
\theoremstyle{definition}
\newtheorem{definition}[theorem]{Definition}
\newtheorem{example}[theorem]{Example}
\theoremstyle{remark}
\newtheorem{remark}[theorem]{Remark}
\numberwithin{equation}{section}
\numberwithin{table}{section}
\numberwithin{figure}{section}
\renewcommand{\qed}{\hfill {$\Box$}}
\renewcommand{\AA}{\mathord{\mathbb A}}
\newcommand{\CC}{\mathord{\mathbb C}}
\newcommand{\PP}{\mathord{\mathbb  P}}
\newcommand{\RR}{\mathord{\mathbb R}}
\newcommand{\ZZ}{\mathord{\mathbb Z}}
\newcommand{\AAA}{\mathord{\mathcal A}}
\newcommand{\LLL}{\mathord{\mathcal L}}
\newcommand{\NNN}{\mathord{\mathcal N}}
\newcommand{\PPP}{\mathord{\mathcal P}}
\newcommand{\UUU}{\mathord{\mathcal U}}
\newcommand{\maprightsp}[1]{\; \smash{\mathop{\; \longrightarrow \; }\limits\sp{#1}}\; }
\newcommand{\maprightsb}[1]{\; \smash{\mathop{\; \longrightarrow \; }\limits\sb{#1}}\; }
\newcommand{\mapdown}{\phantom{\Big\downarrow}\hskip -8pt \downarrow}
\newcommand{\mapdownright}[1]{\mapdown\rlap{$\vcenter{\hbox{$\scriptstyle#1$}}$}}
\newcommand{\mapdownleft}[1]{\llap{$\vcenter{\hbox{$\scriptstyle#1$\;}}$}\mapdown}
\newcommand{\inj}{\hookrightarrow}
\newcommand{\isom}{\xrightarrow{\sim}}
\newcommand{\set}[2]{\{\,{#1}\mid {#2} \,\}}
\newcommand{\tensor}{\otimes}
\newcommand{\inv}{\sp{-1}}
\newcommand{\dual}{\sp{\vee}}
\newcommand{\sprime}{\sp{\prime}}
\newcommand{\spprime}{\sp{\prime\prime}}
\newcommand{\spcirc}{\sp{\mathord{\circ}}}
\newcommand{\sperp}{\sp{\perp}}
\newcommand{\bdr}{\partial\,}
\DeclareMathOperator{\Image}{Im}
\DeclareMathOperator{\Sing}{Sing}
\DeclareMathOperator{\Ker}{Ker}
\DeclareMathOperator{\Hom}{Hom}
\DeclareMathOperator{\rank}{rank}
\DeclareMathOperator{\pr}{pr}
\DeclareMathOperator{\disc}{disc}
\newcommand{\id}{\mathord{\mathrm{id}}}
\newcommand{\rmand}{\textrm{and}}
\newcommand{\rmor}{\textrm{or}}
\newcommand{\quand}{\quad\rmand\quad}
\newcommand{\intf}[1]{\langle #1 \rangle}
\newcommand{\Cham}{\boldsymbol{C\hskip -.3pt h}}
\newcommand{\Chamb}{\Cham_{\mathrm{b}}}
\newcommand{\Arr}{\mathcal{A}}
\newcommand{\ArrC}{\Arr_{\CC}}
\newcommand{\SingBC}{\PPP}
\newcommand{\thei}{\sqrt{-1}\,}
\newcommand{\res}{|}
\newcommand{\AtR}{\AA^2(\RR)}
\newcommand{\AtC}{\AA^2(\CC)}
\newcommand{\BR}{B(\RR)}
\newcommand{\BC}{B(\CC)}
\newcommand{\Vertexes}{\mathord{\mathrm{Vert}}}
\newcommand{\spbcirc}{\sp{\bullet}}
\newcommand{\betash}{\beta^{\sharp}}
\newcommand{\sigmaA}{\sigma_{\AA}}
\newcommand{\betaRR}{\beta_{\RR}}
\newcommand{\varee}{\varepsilon}
\newcommand{\vareep}{{\varepsilon\sprime}}
\newcommand{\prR}{\pr_{\RR}}
\newcommand{\posR}{\RR_{>0}}
\newcommand{\negR}{\RR_{<0}}
\newcommand{\nonnegR}{\RR_{\ge 0}}
\newcommand{\tilxi}{\tilde{\xi}}
\newcommand{\tilx}{\tilde{x}}
\newcommand{\tilu}{\tilde{u}}
\newcommand{\tilQ}{\widetilde{Q}}
\DeclareMathOperator{\Real}{Re}
\DeclareMathOperator{\Imag}{Im}
\newcommand{\rP}{\mathord{\mathrm P}}
\newcommand{\iP}{i\mathord{\mathrm P}}
\newcommand{\ori}{\mathrm{ori}}
\renewcommand{\ss}{\sigma}
\newcommand{\Pp}{P\sprime}
\newcommand{\spsh}{\sp{\sharp}}
\newcommand{\stdvan}{\Sigma}
\newcommand{\pants}{\Delta}
\newcommand{\intval}{I_{\varee}}
\newcommand{\intvalp}{I_{\vareep}}
\newcommand{\YC}{Y(\CC)}
\newcommand{\YR}{Y(\RR)}
\newcommand{\XC}{X}
\newcommand{\Cp}{C\sprime}
\newcommand{\projNN}{p_{\NNN}}
\newcommand{\Tl}[1]{T[#1]}
\newcommand{\Der}[1]{\partial /\partial #1}
\newcommand{\fDDer}[2]{\frac{\partial #1}{\partial #2}}
\newcommand{\sppprime}{\sp{\prime\prime\prime}}
\newcommand{\tilR}{\widetilde{R}}
\newcommand{\spm}{^{-}}
\newcommand{\ellej}{\ell_{ej}}
\newcommand{\ellPj}{\ell_{Pj}}
\newcommand{\ellPpj}{\ell_{\Pp j}}
\newcommand{\ellez}{\ell_{e0}}
\newcommand{\ellPz}{\ell_{P0}}
\newcommand{\ellPpz}{\ell_{\Pp 0}}
\newcommand{\gp}{g\sprime}
\newcommand{\UW}[1]{U_{W, #1}}
\newcommand{\UX}[1]{U_{X, #1}}
\newcommand{\CCCC}{\mathord{\mathscr{C}}}
\newcommand{\CCCCb}{\mathord{\boldsymbol{C}}}
\newcommand{\tilell}{\tilde{\ell}}
\newcommand{\tilrho}{\tilde{\rho}}
\newcommand{\tilpi}{\tilde{\pi}}
\newcommand{\tilX}{\widetilde{X}}
\newcommand{\tilW}{\widetilde{W}}
\newcommand{\tilArrC}{\widetilde{\Arr}_{\CC}}
\newcommand{\tilBC}{\widetilde{B}(\CC)}
\newcommand{\PtC}{\PP^2(\CC)}
\newcommand{\intfnull}{\intf{\phantom{aa}}}
\newcommand{\ellinf}{\tilell_{\infty}(\CC)}
\newcommand{\PPPinf}{\PPP_{\infty}}
\newcommand{\aone}{\mathord{\mathrm{a1}}}
\newcommand{\dfour}{\mathord{\mathrm{d4}}}
\newcommand{\PPPinfa}{\PPP_{\infty, \aone}}
\newcommand{\PPPinfd}{\PPP_{\infty, \dfour}}
\newcommand{\Hinf}{H_{\infty}}
\newcommand{\Rinf}{R_{\infty}}
\newcommand{\Rinfa}{R_{\infty, +}}
\newcommand{\Rinfb}{R_{\infty, -}}
\newcommand{\LLLinf}{\LLL_{\infty}}
\newcommand{\rinf}{r_{\infty}}
\newcommand{\varGammaPtriangle}{\varGamma_P}
\newcommand{\nopslines}{N}
\newcommand{\discg}{\mathord{\mathrm{disc}}}
\newcommand{\barH}{\overline{H}}
\newcommand{\tilN}{\widetilde{\nopslines}}
\newcommand{\vect}[1]{\boldsymbol{#1}}
\newcommand{\free}{\mathrm{fr}}
\begin{document}

\title[Real line arrangement]
{Topology of a complex double plane branching along a real line arrangement}

\author[Ichiro  Shimada]{Ichiro Shimada}
\address{Mathematics Program \\
Graduate School of Advanced Science and Engineering \\
Hiroshima University \\
1-3-1 Kagamiyama, Higashi-Hiroshima, 739-8526 JAPAN}
\email{ichiro-shimada@hiroshima-u.ac.jp}

\begin{abstract}
We investigate the topology of the double cover of the complex affine plane 
branching along a nodal real line arrangement.
We define certain topological $2$-cycles in the double plane 
using the real structure of the arrangement,
and calculate their intersection numbers.
\end{abstract}
\keywords{Real line arrangement, complex double plane,   
topological cycle, vanishing cycle, intersection number}

\subjclass{14F25, 14N20} 

\thanks{This work was supported by JSPS KAKENHI,  Grant Number~20K20879,~20H01798,~23H00081 and~20H00112.} 
\thanks{The author declares that there is no conflict of interest. The data supporting the findings of this study are available from the author upon reasonable request.}
\maketitle
\section{Introduction}\label{sec:introduction}
Explicit description of topological cycles in complex algebraic varieties is an important task.
It is essential, for example,
in the study of periods and related differential equations in a family of  algebraic varieties.
For the recent development of 
\emph{numerical algebraic geometry}~(see, for example,~\cite{ElsenhansJorg2024, LairezSertoz2019}),
we need explicit descriptions of topological cycles suited for the numerical computation of periods
by multiple integrals.
For example, in~\cite{CynkVanStraten2019}, 
the arithmetic of certain Calabi-Yau threefolds obtained as 
the minimal desingularizations of 
double covers of $\PP^3$ is studied by  numerical integration over some topological $3$-cycles.
\par
The first major general theory on topological cycles in 
complex algebraic varieties is the theory of vanishing cycles 
due to Lefschetz~\cite{Lefschetz1924}.
(See also~\cite{Lamotke1981} for the modern accounts of this theory.)
In the present work,
we investigate the topology of the smooth algebraic surface $X$
obtained as the minimal desingularization of 
the double cover of the complex affine plane
branching along a nodal \emph{real} line arrangement.
Using the real structure of the arrangement, 
we construct certain topological $2$-cycles in $X$,
which resemble, in some way, the vanishing cycles of Lefschetz.
\par
To understand the topology of an algebraic surface,
it is important to calculate the intersection form on the middle homology group.
The main purpose of this paper is to calculate the intersection numbers of our topological $2$-cycles in $X$.
Since some pairs of these cycles intersect in  loci of real dimension $\ge 1$,
we have to construct small displacements of these cycles.
\par
Let $\AtR$ be a real affine plane.
An arrangement of ${\nopslines}$ real lines 
 \[
\Arr:=\{\ell_1(\RR ), \dots, \ell_{\nopslines}(\RR )\}
\]
on $\AtR$ is called \emph{nodal} 
if  no three lines of $\Arr$ are concurrent.
Suppose that $\Arr$ is 
a nodal real line arrangement.
Let  
\[
\ArrC :=\{\ell_1(\CC ), \dots, \ell_{\nopslines}(\CC )\}
\]
be the arrangement of complex affine lines in the complex affine plane  $\AtC$
obtained by complexifying the lines in $\Arr$.
We put
\[
\BR:=\bigcup_{i=1}^{\nopslines} \ell_i(\RR ),
\quad 
\BC:=\bigcup_{i=1}^{\nopslines} \ell_i(\CC ).
\]
We consider the morphisms 
\begin{equation}\label{eq:rhopi}
X\maprightsp{\rho} W \maprightsp{\pi} \AtC,
\end{equation}
where  $\pi\colon W\to \AtC$ is the double covering whose branch locus is equal to $\BC$,
and $\rho\colon X\to W$ is the minimal resolution of singularities.
The purpose of this paper is to investigate  the intersection form
\[
\intfnull\;\;\colon\;\;  H_2(X; \ZZ)\times H_2(X; \ZZ)\to  \ZZ
\]
on the second homology group $H_2(X; \ZZ)$ of $X$,
where $X$ is oriented as a smooth complex surface.
\par
For simplicity, we put 
\[
\SingBC:=\Sing \BC.
\]
Let $\beta\colon \YC\to \AtC$ be the blowing-up at the points in $\SingBC$.
For a subset $S$ of $\AtC$, we put
\[
S\spbcirc:=S\setminus(S\cap \SingBC),
\quad
\betash S:=\textrm{the closure of $\beta\inv(S\spbcirc)$ in $\YC$},
\]
and call $\betash S$ the \emph{strict transform} of $S$.
We then put
\[
\YR:=\betash \AtR. 
\]
Note that $\betash B(\CC)$ is the disjoint union of smooth rational curves $\betash\ell_i(\CC )$ on $\YC$.
Then $X$ fits in the following commutative diagram:
\begin{equation}\label{eq:commdiagXWYA}
\begin{array}{ccc}
X  &\maprightsp{\rho} & W \\
\mapdownleft{\phi}  && \mapdownright{\pi}\\
\YC & \maprightsb{\beta} & \AtC,
\end{array}
\end{equation}
where $\phi\colon X\to \YC$ is the double covering whose branch locus is 
equal to $\betash B(\CC)$.
For $P\in \SingBC$, 
let $E_P$ denote the exceptional $(-1)$-curve of $\beta$ over $P$,
and we put 
\[
D_P:=\phi\inv(E_P)=(\pi\circ \rho)\inv (P),
\]
which is a smooth rational curve on $X$ with self-intersection number $-2$.
Note  that $D_P$ is the exceptional curve of  the minimal desingularization 
$\rho\colon X\to W$ over the ordinary node of $W$ that is mapped to $P$ by $\pi$.
If $P$ is the intersection point of the lines $\ell_a(\RR )$ and $\ell_b(\RR )$ in $\Arr$,
then  $\phi\res D_P\colon D_P\to E_P$
is the double covering branching at 
the intersection points 
of $E_P$ and
the strict transforms $\betash\ell_a(\CC )$ and $\betash\ell_b(\CC )$.
\par
A \emph{chamber} is the closure in $\AtR$ of a connected component  of  
 the complement $\AtR \setminus \BR$ of $\BR$.
We denote by  $\Chamb$  the set of \emph{bounded} chambers.
 Let $C$ be a bounded chamber.
We  put  
\[
\Vertexes (C):=C\cap \SingBC.
\]
A point of $\Vertexes (C)$ is called a \emph{vertex} of $C$.
Let $P$ be a vertex of $C$.
Then $\YR\cap E_P$ is a circle on the Riemann sphere $E_P$, 
on which the two branch points of the double covering $\phi|D_P\colon D_P\to E_P$ locate,
and 
\[
J_{C, P}:=\betash C\cap E_P
\] 
is a part of the circle  $\YR\cap E_P$ connecting
these two branch points.
Therefore 
\[
S_{C, P}:=\phi\inv(\betash C)\cap D_P=\phi\inv (J_{C, P})
\]
is a circle on the Riemann sphere $D_P$.
The space $\phi\inv(\betash C)$ is homeomorphic to a $2$-sphere
minus a union of  disjoint open discs,
and we have
\[
\bdr \phi\inv(\betash C)=\bigsqcup_{P\in\Vertexes (C)}  S_{C, P}.
\]
Let $\gamma_C$ be an orientation  of $\phi\inv(\betash C)$.
We denote by $\pants(C, \gamma_C)$
the topological $2$-chain $\phi\inv(\betash C)$ oriented by $\gamma_C$.
Each boundary component $S_{C, P}$ of $\pants(C, \gamma_C)$
is oriented by $\gamma_{C}$.
Note that  $S_{C, P}$ divides the $2$-sphere $D_P$ into the union of two closed hemispheres, 
and that the two  hemispheres with their complex structures induce orientations on $S_{C, P}$ that
are opposite to each other.
\begin{definition}\label{def:capping}
The \emph{capping hemisphere} for  $\gamma_C$ at $P$ is 
the closed hemisphere $H_{C, \gamma_C, P}$ on 
$D_P$ with  $\bdr H_{C, \gamma_C, P}=S_{C, P}$
such that 
the orientation on $S_{C, P}$ induced by the complex structure of $H_{C, \gamma_C, P}$
is opposite to the orientation  
induced by the orientation $\gamma_C$ of 
$\pants(C, \gamma_C)$.
\end{definition}
Let $H_{C, \gamma_C,  P}$ be the capping hemisphere  for $\gamma_C$ at $P$.
Then
\[
\stdvan(C, \gamma_C):=\pants(C, \gamma_C) \;\;\cup\;\; \bigsqcup_{P\in\Vertexes (C)}  H_{C, \gamma_C, P}
\]
with the orientations coming from the complex structure on each $H_{C, \gamma_C, P}$
is a topological $2$-cycle homeomorphic to a $2$-sphere.
Figure~\ref{fig:vanishingcycle} illustrates $\stdvan(C, \gamma_C)$
when $C$ is a triangle.
\par
In  this paper, we show that this topological $2$-cycle $\stdvan(C, \gamma_C)$ 
resembles, in many aspects,
the vanishing cycle for an ordinary node of a complex surface.
Hence we make the  following:
\begin{definition}\label{def:vanishing}
The $2$-cycle $\stdvan(C, \gamma_C)$ is called a \emph{vanishing cycle} 
over the bounded chamber $C$.
Its homology class $[\stdvan(C, \gamma_C)]\in H_2(X; \ZZ)$ is also called 
a vanishing cycle over $C$.
\end{definition}
\begin{figure}
\begin{tikzpicture}[x=2cm, y=2cm]
%
\draw[thick](-0.75,0.8660254037844386)--(-0.75,1.166025403784439);
\draw[thick](0.75,0.8660254037844386)--(0.75,1.166025403784439);
\draw[thick](-0.3761491871287599,-1.082132981209968)--(-0.6361159271456396,-1.231857041016654);
\draw[thick](-1.12476948616219,0.2177007188744306)--(-1.384736226179069,0.06797665906774464);
\draw[thick](1.125457222599352,0.2141168841986855)--(1.384945727466262,0.06356551143199263);
\draw[thick](0.3727003587658874,-1.083325640135864)--(0.6321888636327971,-1.233877012902557);
\draw[thick] (-1.12476948616219,0.2177007188744306) arc [start angle=-60, end angle=0, radius=0.75];
\draw[thick] (0.3727003587658874,-1.083325640135864) arc [start angle=60, end angle=120, radius=0.75];
\draw[thick] (0.75,0.8660254037844386) arc [start angle=180, end angle=240, radius=0.75];
\draw[thick] (0.75,1.166025403784439) arc [start angle=0, end angle=180, radius=0.75];
\draw[thick] (-1.384736226179069,0.06797665906774464) arc [start angle=120, end angle=300, radius=0.75];
\draw[thick] (0.6321888636327971,-1.233877012902557) arc [start angle=240, end angle=420, radius=0.75];
\draw[thick, rotate=0,shift={(0,1.166025403784439)}, variable =\t, domain =0:180]plot[smooth]({0.75*cos(\t)}, {0.2*sin(\t)});
\draw[thick, rotate=120,shift={(0,1.166025403784439)}, variable =\t, domain =0:180]plot[smooth]({0.75*cos(\t)}, {0.2*sin(\t)});
\draw[thick, rotate=240,shift={(0,1.166025403784439)}, variable =\t, domain =0:180]plot[smooth]({0.75*cos(\t)}, {0.2*sin(\t)});
\draw[thin, rotate=0,shift={(0,1.166025403784439)}, variable =\t, domain =180:360]plot[smooth]({0.75*cos(\t)}, {0.2*sin(\t)});
\draw[thin, rotate=120,shift={(0,1.166025403784439)}, variable =\t, domain =180:360]plot[smooth]({0.75*cos(\t)}, {0.2*sin(\t)});
\draw[thin, rotate=240,shift={(0,1.166025403784439)}, variable =\t, domain =180:360]plot[smooth]({0.75*cos(\t)}, {0.2*sin(\t)});
\node [right] at (-0.4,0){$\pants(C, \gamma_C)$};
\node [right] at (0.75,1.166025403784439){$S_{C,P}$};
\node [right] at (-0.3,1.646025403784439){$H_{C,\gamma_C, P}$};
%
\end{tikzpicture}
\caption{Vanishing cycle $\stdvan(C, \gamma_C)$}\label{fig:vanishingcycle}
\end{figure}
By definition,
the homology class $[\stdvan(C, \gamma_C)]$
 depends on 
 $ \gamma_C$ as follows:
\begin{equation}\label{eq:orientationreversing}
[\stdvan(C, \gamma_C)]+[\stdvan(C, -\gamma_C)]=\sum_{P\in \Vertexes  (C)} [D_P], 
\end{equation}
where $[D_P] \in H_2(X; \ZZ)$ is the homology class of the smooth rational curve $D_P$.
\par
Our first main result is as follows.
\begin{theorem}\label{thm:basis}
The $\ZZ$-module $H_2(X; \ZZ)$ is free.
We fix an orientation $\gamma_C$
for each $C\in \Chamb$.
Then the homology classes $[\stdvan(C, \gamma_C)]$,
where $C$ runs through $\Chamb$, 
and the homology classes $[D_P]$,
where $P$ runs through $\SingBC$, 
form a basis of $H_2(X; \ZZ)$.
\end{theorem}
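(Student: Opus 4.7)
The plan has three steps: establish that $H_2(X;\ZZ)$ is free of the correct rank, construct a finite cellular spine of $X$, and identify the proposed classes inside that spine. Since $X$ is a smooth affine complex surface, the Andreotti--Frankel theorem gives $X$ the homotopy type of a CW complex of real dimension at most $2$; in particular $H_k(X;\ZZ)=0$ for $k\ge 3$, and $H_2(X;\ZZ)$ is automatically torsion-free (as the kernel of a boundary in a free cellular chain complex). Writing $N=|\Arr|$, inclusion--exclusion on the nodal union $B(\CC)$ of $N$ lines gives $\chi(B(\CC))=N-\binom{N}{2}$, hence $\chi(W)=2\chi(\AtC)-\chi(B(\CC))=2-N+\binom{N}{2}$; since $\rho$ replaces each node of $W$ by a smooth rational $(-2)$-curve (adding $+1$ to $\chi$), we obtain $\chi(X)=\chi(W)+|\SingBC|=(N-1)^2+1$. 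The proposed basis has size $|\Chamb|+|\SingBC|=\binom{N-1}{2}+\binom{N}{2}=(N-1)^2$, which matches $b_2(X)$ precisely once $H_1(X;\ZZ)=0$ is established.

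To establish both $H_1(X;\ZZ)=0$ and the basis statement, I would construct a finite $2$-dimensional spine $Y\subset X$ onto which $X$ deformation retracts, namely $Y=\phi^{-1}(\betash K)\cup\bigcup_{P\in\SingBC}D_P$, where $K\subset\AtR$ is a compact contractible regular neighborhood of $\bigcup_{C\in\Chamb}C$ onto which $\AtR$ retracts by pushing in from infinity along unbounded chambers. In $Y$ the $2$-chain $\pants(C,\gamma_C)$ has boundary $\sum_{P\in\Vertexes(C)}\pm S_{C,P}$, and each $D_P\cong S^2$ is a closed $2$-sphere. Any cellular $2$-cycle of $Y$ thus has the form $\sum_C a_C\,\pants(C,\gamma_C)+\sum_P z_P$ with $z_P$ a $2$-chain on $D_P$; the cycle condition forces $\partial z_P=-\sum_{C}a_C\,\epsilon_{C,P}\,S_{C,P}$, which, since $H_1(S^2)=0$, determines $z_P$ up to a $\ZZ$-multiple of $[D_P]$. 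Choosing $z_P=\hemi{C,\gamma_C}{P}$ realizes $[\stdvan(C,\gamma_C)]=[\pants(C,\gamma_C)]+\sum_{P\in\Vertexes(C)}[\hemi{C,\gamma_C}{P}]$, and this decomposition immediately exhibits $\{[\stdvan(C,\gamma_C)]\}_{C\in\Chamb}\cup\{[D_P]\}_{P\in\SingBC}$ as a $\ZZ$-basis of $H_2(Y;\ZZ)=H_2(X;\ZZ)$ and gives $H_1(Y;\ZZ)=0$ at the same time.

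The main obstacle is rigorously producing the retraction $X\simeq Y$. The naive straight-line projection $\AtC\to\AtR$ does not lift through $\pi$ in a neighborhood of the non-real part $B(\CC)\setminus B(\RR)$ of the branch locus, so the homotopy must be modified locally to track the monodromy of the double cover; likewise, behavior near each exceptional divisor $E_P$ of $\beta$ requires a careful local normal-form analysis so that the pants retract correctly onto $\pants(C,\gamma_C)\cup\bigsqcup_{P}\hemi{C,\gamma_C}{P}$. An alternative route that bypasses the retraction altogether is to verify $\ZZ$-linear independence of the proposed classes directly via the intersection pairing --- $[D_P]\cdot[D_Q]=-2\delta_{PQ}$ is immediate, while the mixed intersections $[\stdvan(C,\gamma_C)]\cdot[D_P]$ are governed by the capping hemisphere $\hemi{C,\gamma_C}{P}$ and the cross-intersections $[\stdvan(C,\gamma_C)]\cdot[\stdvan(C',\gamma_{C'})]$ are the central computations of the subsequent sections of the paper --- combined with an independent verification of $H_1(X;\ZZ)=0$, for instance via the Zariski--van Kampen presentation of $\pi_1(\AtC\setminus B(\CC))$ modulo the squares of the meridians.
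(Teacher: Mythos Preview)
Your overall strategy---retract to a $2$-dimensional spine built from the chambers and the exceptional curves, then read off the basis---is the paper's approach as well, but two of your steps fail as stated.

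First, $X$ is \emph{not} affine: it contains the complete rational curves $D_P$, so Andreotti--Frankel does not apply to $X$. The paper sidesteps this by working on the singular model $W$, which is a closed affine subvariety of $\CC^3$. It factors the problem through $\rho_*\colon H_2(X;\ZZ)\to H_2(W;\ZZ)$, which is surjective with kernel freely generated by the $[D_P]$ (cited from an external source); this reduces everything to showing that $H_2(W;\ZZ)$ is free on the classes $\rho_*[\stdvan(C)]=[\pi^{-1}(C)]$, and also cleanly separates out the behaviour near the exceptional divisors that you flag as a second obstacle.

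Second---and this is the key insight you are missing---the straight-line retraction from $\AtC$ to $\AtR$ \emph{does} lift through $\pi$. For a real line $\ell_i$, the fibre $\prR^{-1}(Q_0)\cong\sqrt{-1}\,T(\AtR)$ over $Q_0\in\AtR$ meets $\ell_i(\CC)$ either in $\emptyset$ (if $Q_0\notin\ell_i(\RR)$) or in the real line $\sqrt{-1}\,\Tl{\ell_i(\RR)}$ through the origin of the fibre (if $Q_0\in\ell_i(\RR)$). Hence the straight segment from any $Q\in\AtC$ to $\prR(Q)$ is either disjoint from $\BC$, entirely contained in $\BC$, or meets $\BC$ only at its real endpoint; in all cases, once the homotopy enters $\BC$ it stays there. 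This is exactly the condition needed to lift through the branched cover: outside $\BC$ use the \'etale structure of $\pi$, and on $\BC$ use that $\pi$ restricts to a homeomorphism $\pi^{-1}(\BC)\to\BC$. A second, analogous retraction of $\AtR$ onto $\bigcup_{C\in\Chamb}C$ (pushing unbounded chambers onto their boundaries, then collapsing the remaining pieces of $\BR$) lifts to $W$ for the same reason, and a Mayer--Vietoris argument on the resulting union of $2$-spheres $\pi^{-1}(C)$ finishes the computation of $H_2(W;\ZZ)$. No local modification ``to track the monodromy'' is needed anywhere; the real structure of the arrangement does all the work.
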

To state the second result, we need the following:
\begin{definition}\label{def:coherent}
Let $C$ and $C\sprime$ be bounded chambers such that 
$C\cap C\sprime$ consists of a single point $P\in \SingBC$.
Note that we have $S_{C, P}=S_{C\sprime, P}$.
We say that the orientations $\gamma_C$ and $\gamma_{C\sprime}$  are \emph{coherent}
if the capping hemispheres  $H_{C, \gamma_C, P}$ and  $H_{C\sprime, \gamma_{C\sprime}, P}$ 
are distinct.
We say that a collection $\set{\gamma_C}{C\in \Chamb}$ of orientations 
are \emph{coherent} if 
 $\gamma_C$ and $\gamma_{C\sprime}$ are coherent
whenever 
$C\cap C\sprime$ consists of a single point.
 \end{definition}
We have the following:
\begin{proposition}\label{prop:coherentcollection}
A coherent collection  of orientations exists.
\end{proposition}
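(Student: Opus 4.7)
The plan is to construct coherent orientations explicitly using the real structure of the double cover, exploiting the observation that two opposite sectors $C, C'$ at a vertex $P$ always have the same sign for the defining polynomial $f := \prod_{i=1}^{\nopslines} \ell_i$ of the arrangement (since passing between $C$ and $C'$ flips exactly the two linear forms of the lines through $P$).

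First, I would define $\gamma_C$ for each bounded chamber $C$ by a positive-sheet rule. Let $\wt z$ denote the pullback to $X$ of the coordinate $z$ on $W$ satisfying $z^2 = f$; then $\wt z^2 = f$ has constant sign on $\pants(C, \gamma_C)$, so $\wt z$ is purely real when $f > 0$ on $C$, and purely imaginary when $f < 0$ on $C$. I pick the \emph{positive sheet} to be $\{\wt z > 0\}$ in the first case and $\{\Imag \wt z > 0\}$ in the second; in either case it is a real $2$-manifold diffeomorphic to $\betash C$ via $\phi$. I orient the positive sheet by pulling back the standard orientation of $\AtR$, and orient the other sheet (its image under the deck involution of $\phi$) oppositely. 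Because the two sheets glue along the line-edges of $\betash C$ with opposite induced boundary orientations, they assemble into a consistent orientation $\gamma_C$ of $\pants(C, \gamma_C)$.

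Next, I would verify coherence at each shared vertex $P$ by a local computation. Choose affine coordinates $(u, v)$ on $\AtC$ centered at $P$ so that the two lines of $\Arr$ through $P$ are $\{u = 0\}$ and $\{v = 0\}$; then locally $f = u v \cdot g$ with $g(P) \neq 0$. In the blow-up chart $(u, \lambda)$ with $v = \lambda u$, the surface $X$ has local coordinates $(u, \mu)$ with $\mu^2 = \lambda g$, and $D_P = \{u = 0\}$. Consider the case $f > 0$ on $C$ and $C'$ (the case $f < 0$ is symmetric, with $\mu \in \RR$ replaced by $\mu \in i\RR$); after relabeling we may assume $g(P) > 0$ and that $C, C'$ are the two opposite sectors in which $uv > 0$. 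Restricted to this chart, my rule then makes $\pants(C, \gamma_C)$ the half-plane $\{u \geq 0,\ \mu \in \RR\}$ with orientation $+du \wedge d\mu$, and $\pants(C', \gamma_{C'})$ the half-plane $\{u \leq 0,\ \mu \in \RR\}$ also with orientation $+du \wedge d\mu$. Their union is the single oriented plane $\{(u, \mu) \in \RR^2\}$ crossing the shared circle $S_{C, P} = S_{C', P} = \{u = 0\} \subset D_P$; the outward normals $-\der_u$ and $+\der_u$ induce opposite boundary orientations on this circle, which is precisely the condition that $H_{C, \gamma_C, P}$ and $H_{C', \gamma_{C'}, P}$ are the two distinct hemispheres of $D_P$.

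The main point I will need to check is that the positive-sheet orientation is well-defined globally on $\pants(C)$ for each chamber $C$, independent of which vertex or local chart one uses to describe it. This will be immediate from the formulation, because the rule---``positive real or imaginary sheet of $\wt z$ carries the pullback of the standard orientation of $\betash C$''---depends only on the chamber $C$ and the globally defined function $\wt z$. The local verification at each shared vertex then yields coherence throughout, completing the construction of a coherent collection.
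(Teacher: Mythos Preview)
Your approach is essentially the same as the paper's: you define exactly the paper's \emph{standard orientations} $\sigma_C$ (Definition~\ref{def:sigmaC}) via the positive-sheet rule, and then verify coherence by a local computation at each shared vertex, which is the content of Corollary~\ref{cor:coherent}. Your local argument is somewhat more streamlined than the paper's---rather than building up the eight-case Table~\ref{table:functions} to identify the capping hemispheres explicitly, you observe directly that in suitable blow-up coordinates $(u,\mu)$ the two pieces $\pants(C)$ and $\pants(C')$ glue to the single oriented real plane $\{(u,\mu)\in\RR^2\}$, whence the induced boundary orientations on $S_{C,P}=S_{C',P}$ are opposite---but the underlying idea and the orientation being constructed are identical.
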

\begin{definition}\label{def:edge}
Let $C$ be a chamber.
We say that 
 $\ell_i(\RR)\in \Arr$ defines an \emph{edge} $C\cap \ell_i(\RR)$ of  $C$ if
 $C\cap \ell_i(\RR)$ contains a non-empty open subset of  $\ell_i(\RR)$.
 \end{definition}
Our second main result is as follows.
\begin{theorem}\label{thm:intnumbs}
For each bounded chamber $C\in \Chamb$,
we fix an orientation $\gamma_C$.
Let $C$ and $C\sprime$ be distinct bounded chambers.
\begin{enumerate}[{\rm (1)}]
\item For $P\in \SingBC$, we have
\[
\intf{[\stdvan(C, \gamma_C)], [D_P]}=\begin{cases}
-1 & \textrm{if $P\in \Vertexes (C)$}, \\
0 & \textrm{if $P\notin \Vertexes (C)$}.
\end{cases}
\]
\item
If $C$ and $C\sprime$ are disjoint,  then $\intf{[\stdvan(C, \gamma_C)], [\stdvan(C\sprime, \gamma_{C\sprime})]}=0$.
\item
Suppose that $C\cap C\sprime$ consists of a single point.
Then we have
\[
\intf{[\stdvan(C,\gamma_C)], [\stdvan(C\sprime, \gamma_{C\sprime})]}=
\begin{cases}
0 &\textrm{if $\gamma_C$ and $\gamma_{C\sprime}$ are coherent}, \\
-1 &\textrm{otherwise}.
\end{cases}
\]
\item
If $C$ and $C\sprime$ share a common edge,  
then 
$\intf{[\stdvan(C,\gamma_C)],  [\stdvan(C\sprime, \gamma_{C\sprime})]}=-1$.
\item The self-intersection number $\intf{[\stdvan(C, \gamma_C)], [\stdvan(C, \gamma_C)]}$ is equal to $-2$.
\end{enumerate}
\end{theorem}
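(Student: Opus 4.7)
My plan is to prove each case by perturbing one of the two $2$-cycles to a generic representative transverse to the other, and then counting signed intersection points, exploiting the decomposition
\[
\stdvan(C, \gamma_C) = \pants(C, \gamma_C) \;\cup\; \bigsqcup_{P \in \Vertexes(C)} H_{C, \gamma_C, P}.
\]
Parts~(1) and~(2) reduce either to outright disjointness or to a short homological argument. For~(1) with $P \notin \Vertexes(C)$, the pants $\phi\inv(\betash C)$ is disjoint from $D_P = \phi\inv(E_P)$ because $\betash C \cap E_P = \emptyset$, and every capping hemisphere lies on some $D_Q$ with $Q \ne P$; hence the intersection is~$0$. For $P \in \Vertexes(C)$, I would pair~(\ref{eq:orientationreversing}) with $[D_P]$, use $\intf{[D_P], [D_P]} = -2$ and $\intf{[D_P], [D_Q]} = 0$ for $P \ne Q$, to obtain
\[
\intf{[\stdvan(C, \gamma_C)], [D_P]} + \intf{[\stdvan(C, -\gamma_C)], [D_P]} = -2,
\]
and show equality of the two summands by a local computation near $D_P$: both arise by perturbing, inside the tubular neighborhood of $D_P$, the capping hemisphere off $D_P$ into its normal disk bundle, and the geometric picture is symmetric in $\gamma_C$ and $-\gamma_C$. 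Part~(2) is immediate since $C \cap C\sprime = \emptyset$ forces the two vanishing cycles to have disjoint supports.

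The core of the proof lies in parts~(3) and~(4), where the two cycles share a positive-dimensional locus and genuine perturbations are required. In~(3), the common locus is the circle $S_{C, P} = S_{C\sprime, P} \subset D_P$, enlarged to a full hemisphere of $D_P$ in the non-coherent sub-case. I would work inside the tubular neighborhood of $D_P$, which is a disk bundle over $\PP^1$ of Euler number~$-2$, and displace $\stdvan(C\sprime, \gamma_{C\sprime})$ in the normal direction of $D_P$: in the coherent case the two complementary capping hemispheres separate cleanly after a generic push, yielding~$0$; in the non-coherent case, the shared hemisphere, combined with the pants components of $C$ and $C\sprime$ meeting $D_P$ along the same circle but approaching from opposite sides in $X$ (since $C$ and $C\sprime$ occupy opposite quadrants around $P$ in $\AtR$), produces a single transverse intersection of sign $-1$. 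In~(4), the common locus is $\phi\inv$ of the shared edge, contained in the branch divisor $\betash\ell_i(\CC)$; a small displacement of $\pants(C\sprime, \gamma_{C\sprime})$ along the normal direction to this branch curve, combined with a careful accounting of the contributions near the two shared endpoint vertices (whose capping hemispheres on $D_P$ and $D_Q$ may or may not coincide with those of $\stdvan(C, \gamma_C)$), gives net intersection $-1$ independent of the chosen orientations.

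Part~(5) can then be obtained by pairing~(\ref{eq:orientationreversing}) with $[\stdvan(C, \gamma_C)]$, substituting the values of $\intf{[\stdvan(C, \gamma_C)], [D_P]}$ from part~(1), and computing the cross-intersection $\intf{[\stdvan(C, \gamma_C)], [\stdvan(C, -\gamma_C)]}$ by a local analysis at each vertex parallel to that of~(3), applied to two vanishing cycles over the same chamber with opposite orientation. The main obstacle throughout is the meticulous sign bookkeeping when displacing complex-analytic pieces (the capping hemispheres on the $(-2)$-curves $D_P$ and the pants components adjacent to the branch locus) inside the real $4$-manifold $X$: each local displacement must be chosen coherently with both the complex orientation inherited from $X$ and the chosen real orientation $\gamma_C$, and the local combinatorics of the arrangement near each vertex and edge subtly control the resulting signs. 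I expect that setting up an explicit real-analytic normal form around each $P \in \SingBC$ and along each edge of $\Arr$ will make these sign computations tractable.
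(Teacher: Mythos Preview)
Your treatment of (1)--(3) is essentially the paper's approach (perturbation via tubular-neighbourhood displacements), with only cosmetic differences; for (3) the paper works only in the coherent case, since the standard orientations it fixes are automatically coherent (Corollary~\ref{cor:coherent}), and recovers the non-coherent case from~\eqref{eq:orientationreversing} together with part~(1).

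For (4) your proposed route differs from the paper's. Rather than displacing normal to the branch curve along the shared edge, the paper first shows (Lemma~\ref{lem:same}) that the intersection number is independent of the ambient arrangement, via an explicit local biholomorphism near the common edge; it then compares the standard orientations for the defining polynomials $f$ and $-f$, observing that the isomorphism $\omega\mapsto\thei\omega$ between the two double covers preserves one of the two standard orientations and reverses the other, so that~\eqref{eq:orientationreversing} combined with part~(1) yields $2\,\intf{[\stdvan(C)],[\stdvan(C')]}=-2$. This sign-flip trick bypasses the direct edge-displacement you sketch.

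Your plan for (5), however, has a genuine gap. Pairing~\eqref{eq:orientationreversing} with $[\stdvan(C,\gamma_C)]$ does give
\[
\intf{[\stdvan(C,\gamma_C)],[\stdvan(C,\gamma_C)]}+\intf{[\stdvan(C,\gamma_C)],[\stdvan(C,-\gamma_C)]}=-|\Vertexes(C)|,
\]
but the cross-term is not ``parallel to~(3)'': in (3) the two cycles overlap only along a single circle $S_{C,P}\subset D_P$, whereas $\stdvan(C,\gamma_C)$ and $\stdvan(C,-\gamma_C)$ share the entire two-dimensional pants $\phi\inv(\betash C)$. Displacing that pants off itself is precisely the self-intersection computation you are trying to avoid, so the reduction is circular (indeed, substituting $[\stdvan(C,-\gamma_C)]=\sum_P[D_P]-[\stdvan(C,\gamma_C)]$ into the cross-term returns a tautology). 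The paper proves (5) by constructing two genuinely different displacements of $\stdvan(C)$---an $E$-displacement at every vertex and a separate $C$-displacement pushing the pants into the purely imaginary directions via a carefully built map $\delta\colon C\to T(\AtR)$---and then computing their transverse intersection through explicit local-coordinate calculations (Sections~\ref{subsec:proof5intnumb1}--\ref{subsec:proof5intnumb2}); this is the most technical step of the whole argument and cannot be shortcut through~\eqref{eq:orientationreversing}.
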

\begin{remark}\label{rem:compatibility}
Using~\eqref{eq:orientationreversing},
we can easily check that, if 
Theorem~\ref{thm:intnumbs} is valid for one choice of  
orientations, 
then Theorem~\ref{thm:intnumbs} is true for any choice of
orientations.
Hence it is enough to prove Theorem~\ref{thm:intnumbs}
for a fixed collection of orientations.
We will prove Theorem~\ref{thm:intnumbs}
for  \emph{standard orientations},
which will be  defined in Section~\ref{subsec:StandardOrientations}.
The collection of standard orientations is in fact coherent.
\end{remark}
Our construction of vanishing cycles over bounded chambers
is similar to the construction of topological cycles by Pham~\cite{Pham1965}
in hypersurfaces defined  by equations of Fermat type.
Pham's construction has many applications.
For example, it was applied in~\cite{DegtyarevShimada2016}
to the study of integral Hodge conjecture for Fermat varieties.
It was also applied to the calculation of 
Picard lattices of quartic surfaces in~\cite{LairezSertoz2019}.
\par
A totally different algorithm to calculate 
the intersection form on the middle homology group of an open complex surface,
which we called a \emph{Zariski-van Kampen method}, 
was presented in~\cite{ArimaShimada2009} and~\cite{ShimadaTakahashi2010}.
It was applied to the construction of arithmetic Zariski pairs
in~\cite{ArimaShimada2009} and~\cite{Shimada2008}.
\par
For other results on double coverings of the plane and real algebraic geometry, 
the reader is referred to~\cite{Shustin1990}.
\par
This paper is organized as follows.
In Section~\ref{sec:Terminology},
we fix terminology about real line arrangements.
In particular,
we define standard orientations $\sigma_C$.
In Section~\ref{sec:proofThmA},
we prove Theorem~\ref{thm:basis}.
In Section~\ref{sec:cappings},
we calculate the capping hemispheres explicitly.
In Section~\ref{sec:displacements},
we introduce   the notion of   displacements of vanishing  cycles.
In Sections~\ref{sec:proofofThmB},~\ref{sec:Proof4},~\ref{sec:Proof5}, 
we prove Theorem~\ref{thm:intnumbs}.
 In Section~\ref{sec:examples},
 we calculate some concrete examples.
 \begin{remark}
 A referee for the first version of this paper 
 suggests a simpler proof of Theorem~\ref{thm:intnumbs} (5),
 which uses a normal vector field.
 \end{remark}
\section{Terminology}\label{sec:Terminology}
Let $\AAA=\{\ell_1(\RR ), \dots, \ell_{\nopslines}(\RR )\}$ be a nodal real line arrangement.
\subsection{Standard orientations}\label{subsec:StandardOrientations}
We define \emph{standard orientations}
$\sigma_C$ for bounded chambers $C$.
\begin{definition}
A \emph{defining polynomial} of the arrangement $\AAA$  is the product 
\[
f:=\prod_{i=1}^n  \lambda_i,
\]
where $\lambda_i$ is  an affine linear function 
$\lambda_i\colon \AtR\to \RR $
such that $\ell_i(\RR )=\lambda_i\inv (0)$.
\end{definition}
A defining polynomial of $\AAA$ is unique up to real multiplicative constant.
We can regard a defining polynomial  $f\colon \AtR\to \RR $ of $\AAA$ as a complex-valued polynomial function $f\colon \AtC\to \CC$.
\par
We  fix an orientation $\sigmaA$ of   $\AtR$
and  a  defining polynomial $f$ of $\AAA$.
Then the double covering $\pi\colon W\to \AtC$ in the diagram~\eqref{eq:commdiagXWYA}
is given by the projection to the second factor 
 from 
\begin{equation}\label{eq:Weq}
W:=\set{(\omega, Q)\in \CC \times \AtC}{\omega^2=f(Q)}, 
\end{equation}
where $\omega$ is an affine coordinate of $\CC $.
Let $C$ be  a bounded chamber of $\Arr$, and 
let $C\spcirc$ be the interior of $C$ in $\AtR$.
The pull-back  $\pi\inv(C\spcirc)$ has two connected components,(
which we call \emph{sheets}.
We can consider  $\pi\inv(C\spcirc)$ as an open subset  of 
$\phi\inv(\betash C)$   via 
 $\rho\colon X\to W$,
 where $\phi$ and $\rho$ are given~\eqref{eq:commdiagXWYA}.
Let $\gamma_C$ be an orientation of $\phi\inv(\betash C)$.
We let the two sheets be oriented by 
$\gamma_C$.
Then $\pi\colon W\to\AtC$ restricted to one  sheet 
is an orientation-preserving homeomorphism to $C\spcirc$  oriented by 
$\sigmaA$,
whereas 
$\pi$ restricted to the other  sheet is orientation-reversing.
\begin{definition}\label{def:sheets}
The sheet on which $\pi$ is orientation-preserving
is called the \emph{positive-sheet} with respect to $\sigmaA$ and $\gamma_C$,
and the other sheet is called the \emph{negative-sheet}.
\end{definition}
The orientation $\gamma_C$ of $\phi\inv(\betash C)$ 
 is specified by indicating which sheet  is positive.
Note that we have either $f(C\spcirc)\subset \RR_{> 0}$ or $f(C\spcirc)\subset \RR_{< 0}$.
In the former case,
the two sheets  are distinguished by the sign on  $\phi\inv(\betash C)$ of the function $\omega=\pm \sqrt{f}\in \RR $,
and in the latter case,
the two  are distinguished by the sign of  $\omega/\thei=\pm\sqrt{-f}\in \RR $.
\begin{definition}\label{def:sigmaC}
The \emph{standard orientation} $\sigma_C$ 
for a bounded chamber $C$ with respect to $\sigmaA$ and $f$ is 
the orientation of $\phi\inv(\betash C)$ such that 
\begin{itemize}
\item
when  $f(C\spcirc)\subset \RR_{>0}$,
 the sheet with $\omega>0$ is positive,  and 
\item
when  $f(C\spcirc)\subset \RR_{<0}$,
 the sheet with $\omega/\thei>0$ is positive.
\end{itemize}
\end{definition}
From now on to the end of this paper, 
we fix  $f$ and $\sigmaA$, and use the standard orientation $\sigma_C$.
We omit $\sigma_C$
from the notation $H_{C, \sigma_C, P}$, $\pants(C,  \sigma_C)$, and $\stdvan(C, \sigma_C)$
introduced in Section~\ref{sec:introduction}:
\begin{equation}\label{eq:omit}
H_{C,  P}:=H_{C, \sigma_C, P},
\quad 
\pants(C):=\pants(C,  \sigma_C), 
\quad
\stdvan(C):=\stdvan(C, \sigma_C).
\end{equation} 
In Section~\ref{subsec:capping}, 
we show that 
the collection $\set{\sigma_C}{C\in \Chamb}$ of standard orientations is coherent in the sense of Definition~\ref{def:coherent}.
In Sections~\ref{sec:proofofThmB}, ~\ref{sec:Proof4},~\ref{sec:Proof5},
we prove Theorem~\ref{thm:intnumbs}
for the standard orientations. 
See~Remark~\ref{rem:compatibility}.
\subsection{The vector space of translations}\label{subsec:translations}
Let $T(\AtC)$ denote the $\CC$-vector space of translations of $\AtC$,
and let $T(\AtR)$ denote  the $\RR$-vector space of translations of $\AtR$.
For a vector $\tau \in T(\AtC)$ and $P\in \AtC$, we denote by  $P+\tau$ the image of $P$ by the translation $\tau\colon \AtC\to\AtC$.
For $Q, Q\sprime\in \AtC$, let $\tau_{Q, Q\sprime}\in T(\AtC)$ 
denote the unique translation such that $Q+\tau_{Q, Q\sprime}=Q\sprime$.
We have
\[
T(\AtC)=T(\AtR)\tensor\CC=T(\AtR)\oplus \thei T(\AtR).
\]
Let $\overline{Q}$ denote the complex conjugate of the point $Q\in \AtC$.
Then the mapping 
\[
Q\;\mapsto\; Q+(1/2) \tau_{Q, \overline{Q}}\;=\;(Q+\overline{Q})/2
\]
 that gives the real part of $Q\in \AtC$
yields a projection 
\[
\prR \colon \AtC\to \AtR.
\]
Then we have a natural identification 
\begin{equation}\label{eq:naturalidentification}
\prR\inv (Q)\;\cong\; \thei T(\AtR)
\end{equation}
of the fiber $\prR\inv (Q)$ 
over $Q\in \AtR$ with  $\thei T(\AtR)$ 
by
$Q\sprime\mapsto  \tau_{Q, Q\sprime}$.
For a real affine line $\lambda (\RR)\subset \AtR$,
we put
\[
\Tl{\lambda(\RR)}:=\set{\tau\in T(\AtR)}{\lambda(\RR)+\tau=\lambda(\RR)},
\]
which is a $1$-dimensional $\RR$-vector subspace of $T(\AtR)$.
In the same way,
we define the subspace of $\Tl{\lambda(\CC)}\subset T(\AtC)$ 
for a complex affine line $\lambda (\CC)\subset \AtC$.
If $\lambda (\CC)$ is 
 the complexification $\lambda(\RR)\tensor\CC$ of a real affine line $\lambda(\RR)$,
then we have
\[
\Tl{\lambda(\RR)\tensor\CC}=\Tl{\lambda(\RR)}\tensor\CC=\Tl{\lambda(\RR)}\oplus \thei \Tl{\lambda(\RR)}.
\]
For affine coordinates  $(x, y)$ of $\AtR$,
we define a basis $e_x, e_y$ of $T(\AtR)$ by
\begin{equation}\label{eq:exey}
e_x\colon (x, y)\mapsto (x+1, y),
\quad 
e_y\colon (x, y)\mapsto (x, y+1).
\end{equation}
Suppose that a real line $\lambda(\RR)\subset \AtR$ is 
defined by 
\[
ax+by+c=0,
\]
where $a, b, c\in \RR$.
Then the linear subspace $T[\lambda(\RR)]$ 
of $T(\AtR)$ (and hence  the linear subspace $T[\lambda(\RR)\tensor\CC]$ 
of $T(\AtC)$)
is generated by the vector
\[
b e_x-a e_y.
\]

\subsection{Good coordinates}\label{subsec:goodcoordinates} 
Let  $(\xi, \eta)$ be affine coordinates   of $\AtC$.
We put
\[
\xi=x+\thei u,
\quad
\eta=y+\thei v,
\]
where $x, u, y, v$ are real-valued functions on $\AtC$.
We say that $(\xi, \eta)$ is \emph{compatible with the $\RR$-structure} if 
the complex conjugation  $Q\mapsto \overline{Q}$ is given by 
\[
(x+\thei u, y+\thei v) \;\mapsto\; (x-\thei u, y-\thei v).
\]
Suppose that $(\xi, \eta)$ is compatible with the $\RR$-structure.
Then 
$\AtR$ is equal to $\{u=v=0\}$, 
and 
the restriction of $(x, y)$ to $\AtR$  
is an affine coordinate system of $\AtR$.
From now on, 
we regard $(x, y)$ as affine coordinates of $\AtR$
by restriction.
Then $\prR\colon \AtC\to \AtR$ is given by
\[
(\xi, \eta) \;\mapsto\; (x, y).
\]
We say that 
 the affine coordinates $(\xi, \eta)$ compatible with the $\RR$-structure are  \emph{good coordinates}
if the ordered basis $(\partial/\partial x, \partial/\partial y)$ 
of tangent vectors of $\AtR$ is positive with respect to the orientation $\sigmaA$ of $\AtR$
fixed in Section~\ref{subsec:StandardOrientations}.
\section{Proof of Theorem~\ref{thm:basis}}\label{sec:proofThmA}
In this section,
we prove Theorem~\ref{thm:basis}.
For a topological space $T$,
we write $H_2(T)$ for $H_2(T; \ZZ)$.
Recall the commutative diagram~\eqref{eq:commdiagXWYA}.
\begin{lemma}\label{lem:kernel}
The homomorphism $\rho_{*} \colon H_2(X)\to H_2(W)$ is surjective,
and its kernel is a 
free $ \ZZ$-module with basis $[D_P]$, where $P$ runs through $\SingBC=\Sing B(\CC)$.
\end{lemma}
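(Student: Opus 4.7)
The plan is to view $\rho\colon X\to W$ as the topological quotient map that collapses each connected component of
\[
E:=\rho\inv(\Sing W)=\bigsqcup_{P\in\SingBC} D_P
\]
to the corresponding node of $W$, and to deduce both assertions from the long exact sequences of the pairs $(X,E)$ and $(W,\Sing W)$. The restriction of $\rho$ to $X\setminus E$ is a homeomorphism onto $W\setminus\Sing W$. Since each $D_P$ admits a tubular neighborhood in $X$ that $\rho$ maps onto a conical neighborhood of the corresponding node of $W$, both $(X,E)$ and $(W,\Sing W)$ are good pairs, and excision yields an isomorphism $\rho_*\colon H_*(X,E)\isom H_*(W,\Sing W)$.

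Because $\Sing W$ is a finite set of points, the long exact sequence of $(W,\Sing W)$ gives $H_2(W)\isom H_2(W,\Sing W)$. Each $D_P$ is a topological $2$-sphere, so $H_1(E)=0$ and $H_2(E)=\bigoplus_P \ZZ\cdot[D_P]$, and the long exact sequence of $(X,E)$ yields the exact sequence
\[
H_2(E)\maprightsp{\iota_*} H_2(X)\maprightsp{j_*} H_2(X,E)\to 0.
\]
By naturality, the composition of $j_*$ with the two isomorphisms above equals $\rho_*$, so $\rho_*$ is surjective and its kernel is the image of $\iota_*$, which is the $\ZZ$-submodule generated by the classes $[D_P]$.

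It remains to check that the classes $[D_P]$ are $\ZZ$-linearly independent in $H_2(X)$. Since the $D_P$ are pairwise disjoint smooth rational curves with $D_P^2=-2$, the intersection matrix $\bigl(\intf{[D_P],[D_{P\sprime}]}\bigr)_{P,P\sprime\in\SingBC}$ is $-2$ times the identity, in particular non-degenerate; hence $\iota_*$ is injective and $\{[D_P]\}_{P\in\SingBC}$ is a $\ZZ$-basis of $\ker\rho_*$. The main obstacle is the excision step: one must verify carefully that $(X,E)$ and $(W,\Sing W)$ are genuinely good pairs and that the excision isomorphism is compatible with $\rho_*$. This is routine — each node of $W$ has a local model as the quotient $\CC^2/\{\pm 1\}$ with resolution the cotangent bundle of $\PP^1$ — but it is what converts the local collapsing picture around each singular point into the required global statement about $H_2$.
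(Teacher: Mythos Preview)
Your proof is correct. The paper's own proof is a single sentence citing Lemma~4.4 of~\cite{ElsenhansJorg2024}, so your argument is genuinely more self-contained than what the paper provides.

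A brief comparison: the paper outsources the entire statement to an external reference, whereas you give a direct algebraic-topology argument via the long exact sequences of the pairs $(X,E)$ and $(W,\Sing W)$, together with the identification $H_*(X,E)\cong H_*(W,\Sing W)$ coming from the fact that $\rho$ induces a homeomorphism $X/E\cong W/\Sing W$. Your use of the intersection matrix $(-2)\cdot\mathrm{Id}$ to establish linear independence of the $[D_P]$ is also clean and does not appear in the paper (it is presumably absorbed into the cited lemma). One small terminological point: what you call ``excision'' is really the good-pair quotient theorem $H_*(X,A)\cong\tilde{H}_*(X/A)$, which is proved \emph{via} excision; the identification $X/E\cong W/\Sing W$ then does the work. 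You flag this correctly in your final paragraph, so there is no gap, but it is worth stating the isomorphism $X/E\cong W/\Sing W$ explicitly rather than leaving it implicit in the word ``excision''.
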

\begin{proof}
This follows from Lemma~4.4~of~\cite{ElsenhansJorg2024}.
\end{proof}
Next we prove the following:
\begin{lemma}\label{lem:image}
The homology group  $H_2(W)$ is a 
free $ \ZZ$-module of which the classes   $\rho_*([\stdvan(C)])$ form a basis, where $C$ runs through $\Chamb$.
\end{lemma}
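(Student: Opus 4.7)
The plan is to first identify each class $\rho_*[\stdvan(C)]$ with the fundamental class of a topological $2$-sphere $\pi\inv(C) \subset W$, then to compute the rank $\rank H_2(W) = |\Chamb|$ via Euler characteristics, and finally to verify these sphere classes form a basis via an explicit dual pairing.

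First, I would check $\rho_*[\stdvan(C)] = [\pi\inv(C)]$. The cycle $\stdvan(C) = \pants(C) \cup \bigsqcup_P H_{C, P}$ has $\pants(C) = \phi\inv(\betash C)$, and the capping hemispheres $H_{C, P}$ together with the boundary circles $S_{C, P}$ all lie in $D_P = \phi\inv(E_P)$. Since $\rho$ contracts each $D_P$ to the node $\pi\inv(P) \in W$, these pieces collapse in $W$, so $\rho(\stdvan(C)) = \pi\inv(C)$, the branched double cover of the polygonal disk $C$ along $\partial C$, i.e.\ a topological $2$-sphere.

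Next, I would count the rank. By Lemma~\ref{lem:kernel}, $\rho_*\colon H_2(X) \to H_2(W)$ is surjective with free kernel $\bigoplus_P \ZZ[D_P]$. Since $X$ is smooth affine, Andreotti--Frankel gives $H_k(X) = 0$ for $k \geq 3$ and $H_2(X)$ free; moreover $X$ is simply connected (a branched double cover of the simply-connected $\AtC$ with connected branch divisor, provable by a Zariski--van Kampen argument as in~\cite{ArimaShimada2009}). Combining $\chi(W) = 2\chi(\AtC) - \chi(B(\CC)) = 2 - (\nopslines - |\SingBC|)$, the resolution identity $\chi(X) = \chi(W) + |\SingBC|$, and Zaslavsky's count $|\Chamb| = |\SingBC| - \nopslines + 1$, yields $\rank H_2(X) = |\Chamb| + |\SingBC|$, hence $\rank H_2(W) = |\Chamb|$.

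Finally, I would produce dual classes to settle both independence and freeness simultaneously. For each $C \in \Chamb$, pick $Q_C$ in the interior of $C$ and a small $2$-disk $\Delta_C \subset Q_C + \thei T(\AtR)$ centered at $Q_C$. For $\Delta_C$ small enough, $\Delta_C \cap B(\CC) = \emptyset$ and $\Delta_C \cap \AtR = \{Q_C\}$, so $\pi\inv(\Delta_C)$ is a disjoint union of two open $2$-disks in $W$; either component is a relative $2$-chain meeting $\pi\inv(C)$ transversally at a single lift of $Q_C$ and disjoint from every $\pi\inv(C\sprime)$ with $C\sprime \neq C$. The resulting pairing matrix is the identity, so the $|\Chamb|$ sphere classes $[\pi\inv(C)]$ are $\ZZ$-linearly independent and span a direct summand of $H_2(W; \ZZ)$ of rank $|\Chamb|$, which by the rank count must equal all of $H_2(W)$. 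I expect the main obstacle to be justifying freeness of $H_2(W)$ without circular reference to Theorem~\ref{thm:intnumbs}: the naive quotient description $H_2(W) \cong H_2(X)/\langle [D_P]\rangle$ does not by itself preclude $2$-torsion coming from a failure of saturation of $\langle [D_P]\rangle$ in $H_2(X)$, so the dual-pairing step is essential, and the delicate transversality check for the imaginary disks $\Delta_C$ and their lifts is where the real work lies.
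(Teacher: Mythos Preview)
Your proposal takes a substantially different route from the paper and contains a factual error that breaks the argument as written. You assert that $X$ is smooth affine and invoke Andreotti--Frankel; but $X$ is \emph{not} affine, since it contains the complete rational curves $D_P$. The conclusions you want---$H_{\ge 3}(X)=0$ and $H_2(X)$ free---do hold, but for a different reason: the affine variety here is $W$ (a hypersurface in $\CC^3$), and the singular Andreotti--Frankel theorem (Hamm) gives $W$ the homotopy type of a $2$-complex, whence $H_{\ge 3}(W)=0$ and $H_2(W)$ is free; Lemma~\ref{lem:kernel} then transfers these to $X$. A second, subtler gap is your dual pairing: a small lifted disk $\tilde\Delta_C$ has boundary, and intersection with such a disk is \emph{not} a well-defined functional on $H_2(W)$---the linking $2$-sphere of $\partial\tilde\Delta_C$ is nullhomologous in $W$ yet meets $\tilde\Delta_C$ once. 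The fix is to use the full imaginary fiber $\prR^{-1}(Q_C)$: it misses $B(\CC)$ entirely (as you implicitly observe), so each sheet of its preimage is a properly embedded copy of $\RR^2$ in the smooth locus of $W$ and defines an honest Borel--Moore class against which the spheres $\pi^{-1}(C')$ pair by the identity matrix.

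The paper avoids all of this. It constructs an explicit strong deformation retraction of $W$ onto $W_{\CCCC}:=\bigcup_{C\in\Chamb}\pi^{-1}(C)$ in two steps: first retract $\AtC$ onto $\AtR$ along the imaginary fibers (this lifts through the branched cover because the straight-line retraction preserves $B(\CC)$), then retract $\AtR$ onto $\CCCC$ by collapsing unbounded chambers onto their boundaries. A short Mayer--Vietoris argument---after ordering the chambers so that each $C_k$ meets $\CCCC_{k-1}$ in a proper arc of $\partial C_k$---then gives $H_2(W_{\CCCC})\cong\bigoplus_C \ZZ[\pi^{-1}(C)]$ directly. This sidesteps Euler characteristics, $\pi_1$-computations, and intersection theory altogether; your approach, once repaired, is more computational but does have the virtue of making the rank identity $|\Chamb|=|\SingBC|-\nopslines+1$ explicit.
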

Combining Lemmas~\ref{lem:kernel} and~\ref{lem:image},
we will obtain a proof of Theorem~\ref{thm:basis}.
\par
Remark that Lemma~\ref{lem:image} holds trivially when $\Chamb=\emptyset$.
Indeed, if $\Chamb$ is empty, 
then either all lines in $\Arr$ are parallel, 
or all lines in $\Arr$ except one line are parallel.
In these cases,  we have $H_2(W)=0$, 
because there exists a deformation retraction of $W$ onto a $\mathrm{CW}$-complex of dimension $1$. 
\par
From now on, we assume $\Chamb\ne \emptyset$.
We  put
\[
W_{\RR }:=\pi \inv (\AtR), \;\;
\CCCC:=\bigcup_{C\in \Chamb} C,  \;\;
W_{\CCCC}:=\pi \inv(\CCCC)=\bigcup_{C\in \Chamb} \pi\inv (C). 
\]
Note that, for $C\in \Chamb$,
the pull-back $\pi\inv(C)$ is homeomorphic to a $2$-sphere,
and is equal to the image $\rho(\stdvan(C))$
of the vanishing cycle $\stdvan(C)$ by the minimal desingularization $\rho\colon X\to W$.
Hence Lemma~\ref{lem:image} follows from the following two lemmas:
\begin{lemma}\label{lem:retract}
There exists a strong deformation retraction of $W $ onto  
$W_{\CCCC}$.
\end{lemma}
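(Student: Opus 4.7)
The plan is to construct the deformation retraction in two stages: first retract $W$ onto $W_{\RR} := \pi\inv(\AtR)$, and then retract $W_{\RR}$ onto $W_{\CCCC}$. In each stage we produce a suitable isotopy on the base ($\AtC$ or $\AtR$) and lift it through the branched double cover $\pi$.

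For Stage~1, fix good coordinates $(\xi,\eta) = (x+\thei u,\, y+\thei v)$ on $\AtC$ and define
\[
h^{(1)}_t(x+\thei u,\, y+\thei v) := \bigl(x+(1-t)\,\thei u,\; y+(1-t)\,\thei v\bigr), \qquad t \in [0,1].
\]
This is visibly a strong deformation retraction of $\AtC$ onto $\AtR$. For each real linear form $\lambda_i = ax+by+c$ defining $\ell_i(\CC)$ one computes $\lambda_i(h^{(1)}_t(Q)) = (ax+by+c)+\thei(1-t)(au+bv)$, so for $t \in [0,1)$ the point $h^{(1)}_t(Q)$ lies in $\BC$ if and only if $Q$ does; in particular $\BC$ is preserved set-wise by $h^{(1)}$. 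Using the unique path-lifting property of the unramified cover $W \setminus \pi\inv(\BC) \to \AtC \setminus \BC$ we lift $h^{(1)}$ on the unramified locus, and extend continuously over the branch locus using the homeomorphism $\pi\inv\colon \BC \to \pi\inv(\BC)$. This produces a strong deformation retraction $H^{(1)}\colon W \times [0,1] \to W$ onto $W_{\RR}$.

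For Stage~2, we construct a strong deformation retraction $h^{(2)}\colon \AtR \times [0,1] \to \AtR$ of $\AtR$ onto $\CCCC$ that sends $\BR$ into $\BR$ for every $t$. Such an arrangement-preserving deformation lifts to $W_{\RR}$: the isomorphism class of a double cover of $\AtR$ branched over $\BR$ is determined by a class in $H^1(\AtR\setminus\BR;\ZZ/2)$, which is discrete, so the identity lift at $t=0$ propagates uniquely along the homotopy, and continuity across branched strata follows. Because $h^{(2)}_t$ fixes $\CCCC$ pointwise, the resulting lift $H^{(2)}$ is a strong deformation retraction of $W_{\RR}$ onto $W_{\CCCC}$. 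To produce $h^{(2)}$ itself we build a continuous vector field $X$ on $\AtR$ that is tangent to every line of $\Arr$, vanishes on $\CCCC$, and whose integral curves bring every point of $\AtR\setminus\CCCC$ into $\CCCC$. Such a field can be assembled from vectors lying in the interiors of the recession cones of the unbounded chambers, combined via a partition of unity subordinate to a cover adapted to the stratification of $\AtR$ by $\Arr$.

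The main technical obstacle lies in Stage~2: one must arrange simultaneously that $X$ respects the stratification by $\Arr$ (tangent to every edge and vanishing at every vertex of $\SingBC$), that it vanishes on all of $\CCCC$, and that its backward flow truly reaches $\CCCC$ rather than escaping to infinity along some unbounded chamber. An inductive argument that peels off the unbounded chambers in order of decreasing combinatorial distance from $\CCCC$ handles this: at each step one retracts a single unbounded chamber onto the union of $\CCCC$ and the previously treated chambers, with the directional choices on either side of any shared edge arranged to agree along that edge. Once $h^{(2)}$ is in hand, the composite $H^{(2)}\circ H^{(1)}$ is the required strong deformation retraction of $W$ onto $W_{\CCCC}$.
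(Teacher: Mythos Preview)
Your two-stage strategy and Stage~1 coincide with the paper's. For Stage~2 the paper likewise constructs a retraction of $\AtR$ onto $\CCCC$ and lifts it via the \emph{same} mechanism as in Stage~1; no $H^1$ argument is needed. The lifting property required is the one you invoked in Stage~1 but weakened here: once the track $t\mapsto h^{(2)}_t(Q)$ enters $\BR$ it must remain in $\BR$ thereafter. Merely ``$h^{(2)}_t(\BR)\subset\BR$'' is insufficient, since interior points of unbounded chambers are eventually pushed onto $\BR$ and must stay there for the lift to be well defined.

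Your vector-field formulation is flawed as stated: a continuous field tangent to every line of $\Arr$ makes each line flow-invariant, so an interior point of an unbounded chamber cannot be carried across a line into $\CCCC$ along an integral curve. Your closing inductive peeling is the correct repair and is essentially the paper's construction, which proceeds in two strokes: first retract each unbounded chamber onto its boundary, obtaining $\CCCC\cup\BR$; then retract the leftover ray-ends of $\BR$ into $\CCCC$. You should add this second stroke explicitly.
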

\begin{lemma}\label{lem:bouquet}
The homology group $H_2(W_{\CCCC})$ is a free $\ZZ$-module with basis
$[\pi\inv(C)]$, where $C$ runs through $\Chamb$.
Here we understand that 
each $2$-sphere $\pi\inv(C)$ is equipped with an orientation.
\end{lemma}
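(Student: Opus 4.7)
The plan is to compute $H_2(W_\CCCC)$ directly from the cellular chain complex of an explicit CW structure on $W_\CCCC$ lifted from one on $\CCCC$. Since the branch locus $B(\RR)$ of $\pi$ intersected with $\CCCC$ is exactly the union of chamber boundaries, I equip $\CCCC$ with the CW decomposition whose $0$-cells are the points of $\SingBC$ lying in some bounded chamber, whose $1$-cells are the components of $(\CCCC \cap B(\RR)) \setminus \SingBC$, and whose $2$-cells are the open chamber interiors $C\spcirc$ for $C \in \Chamb$. The map $\pi$ is cellular with respect to this structure together with a lifted CW structure on $W_\CCCC$ that has the same $0$- and $1$-skeleton but two $2$-cells $C^+$ and $C^-$ above each $C\spcirc$. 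Orienting $C^+$ and $C^-$ so that each restriction of $\pi$ is orientation-preserving, I obtain the cellular boundary formula $d_2(C^+) = d_2(C^-) = \partial C$ in $C_1(\CCCC)$.

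Because $W_\CCCC$ has no cells of dimension $\ge 3$ and $C_2(W_\CCCC) \cong C_2(\CCCC) \oplus C_2(\CCCC)$, the second homology equals the kernel
\[
\ker d_2 \;=\; \{\,(a,b) \in C_2(\CCCC)^2 \mid a + b \in \ker d_2^{\CCCC}\,\}.
\]
The crux of the argument is the vanishing $\ker d_2^{\CCCC} = H_2(\CCCC) = 0$, which I intend to obtain by extending the CW structure on the compact set $\CCCC \subsetneq \AtR$ to one on $S^2 = \AtR \cup \{\infty\}$. Any $2$-cycle supported on cells of $\CCCC$ is then a cellular $2$-cycle of $S^2$, hence a scalar multiple of the fundamental class $[S^2]$; but $[S^2]$ has every $2$-cell of $S^2$ in its support, so a cycle missing the cells of $S^2 \setminus \CCCC$ must be zero. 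Hence $b = -a$ in the kernel description, so $\ker d_2 \cong C_2(\CCCC) \cong \ZZ^{|\Chamb|}$ with basis $\{C^+ - C^-\}_{C \in \Chamb}$. For any chosen orientation of the $2$-sphere $\pi\inv(C)$, the class $[\pi\inv(C)]$ equals $\pm(C^+ - C^-)$, so $\{[\pi\inv(C)]\}_{C \in \Chamb}$ is the desired basis of $H_2(W_\CCCC)$.

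The part of this plan most deserving of care is the CW-theoretic setup: one needs to verify that $\pi$ is cellular for the lifted structure and that $C^+$ and $C^-$ can be consistently oriented so that the cellular boundaries coincide. Beyond that, the proof reduces to the elementary observation that a compact proper subcomplex of $S^2$ supports no nonzero cellular $2$-cycles, which is the only substantive topological input.
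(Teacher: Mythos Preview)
Your argument is correct and genuinely different from the paper's. The paper proceeds by an inductive Mayer--Vietoris argument: it produces an ordering $C_1,\dots,C_m$ of the bounded chambers so that each $K_k:=C_k\cap(C_1\cup\dots\cup C_{k-1})$ is a \emph{proper} arc of $\partial C_k$; then $\pi^{-1}(K_k)$ has vanishing $H_1$ and $H_2$, and Mayer--Vietoris gives $H_2(\pi^{-1}(\CCCC_k))\cong H_2(\pi^{-1}(\CCCC_{k-1}))\oplus H_2(\pi^{-1}(C_k))$. Your route instead computes the cellular chain complex of $W_\CCCC$ directly, using that the $1$-skeleton lies in the branch locus so that $C_1(W_\CCCC)\cong C_1(\CCCC)$ and $d_2(C^{+})=d_2(C^{-})$; the only nontrivial input is $\ker d_2^{\CCCC}=0$, which you obtain by realising $\CCCC$ as a proper subcomplex of a CW structure on $S^2$. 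Your approach is more global and avoids constructing the chamber ordering; the paper's approach avoids having to set up and verify the lifted CW structure and the $S^2$ extension. Both are short once the respective scaffolding is in place. One small point worth making explicit in your write-up: when you extend the CW structure to $S^2=\AtR\cup\{\infty\}$, every line of $\Arr$ passes through $\infty$, so each unbounded chamber closes up to a disk and the extension really is a CW decomposition with $\CCCC$ as a subcomplex; this is what makes the ``proper subcomplex of $S^2$ carries no $2$-cycles'' step go through.
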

\begin{proof}[Proof of Lemma~\ref{lem:retract}]
First, we construct a strong deformation retraction of $W $ onto  
$W_{\RR }$.
To begin with,  
we construct a strong deformation retraction
\[
F\colon \AtC\times I\to \AtC
\]
of $\AtC$ onto  $\AtR$ such that, for $Q\in \AtC$, we have 
\begin{equation}\label{eq:rinvP}
\textrm{$F(Q, t_0)\in \BC$ for  $t_0\in I$}
\;\;\Longrightarrow\;\;
\textrm{$F(Q, t)\in \BC$ for all $t\in [t_0, 1]$}.
\end{equation}
For $Q_0\in \AtR$,
under the natural identification~\eqref{eq:naturalidentification} of $\prR\inv (Q_0)$ with $\thei T(\AtR)$,
we have
\[
\prR\inv (Q_0)\cap \BC=\begin{cases}
\emptyset & \textrm{if $Q_0\notin \BR$}, \\
\thei T[\ell_i(\RR )] & \textrm{if $Q_0\in \ell_i(\RR) \setminus \SingBC$}, \\
\thei \;(\,T[\ell_i(\RR )] \cup T[\ell_j(\RR )]\, )& \textrm{if $\{Q_0\}=\ell_i(\RR)\cap  \ell_j(\RR)$}.
\end{cases}
\]
Hence, for  $Q\in \AtC$,  the line segment in  $\prR\inv (Q_0)$ connecting $Q$ and its real part 
$Q_0:=\prR(Q)\in \AtR$
is either disjoint from $\BC$ or entirely contained in $\BC$ or intersecting $\BC$ only at $Q_0$.
Therefore the strong deformation retraction
of $\AtC$ onto  $\AtR$ defined by 
\[
F(Q, t):= (1-t)\cdot Q+t \cdot Q_0=Q+t\cdot \tau_{Q, Q_0}
\]
satisfies~\eqref{eq:rinvP}. (Recall that $\tau_{Q, Q_0}\in T(\AtC)$ is the vector $Q_0-Q$.)
Now, since the double covering $\pi\colon W \to \AtC$
is \'etale outside $\BC$, and induces a homeomorphism from $\pi\inv(\BC)$ to $ \BC$, 
the  property~\eqref{eq:rinvP} enables us to construct  a strong deformation retraction 
\[
\widetilde{F}\colon W\times I\to W
\]
of $W $ onto  $W_{\RR }$
as a lift of  $F$.
Indeed, 
for $Q\sprime\in W$, 
we define $\widetilde{F}(Q\sprime, t)$
using the homotopy lifting property of $\pi$ outside $\BC$
as long as $F(\pi(Q\sprime), t)\notin \BC$, 
while if $F(\pi(Q\sprime), t)\in \BC$, 
we define $\widetilde{F}(Q\sprime, t)$ to be the unique point of   $\pi\inv(\BC)$ 
that is mapped to  $ F(\pi(Q\sprime), t)$.
\par
Next we construct a strong deformation retraction of $W_{\RR }$ onto
$W_{\CCCC}$.
Since $\pi\res W_{\RR }\colon W_{\RR }\to \AtR$ is 
a local  isomorphism outside of  $\BR$,
and induces a homeomorphism
from $(\pi\res W_{\RR })\inv(\BR)$ to $\BR$, 
the same argument as above shows that 
it is enough to construct a strong deformation retraction
\[
G\colon \AtR\times I\to \AtR
\]
of $\AtR$ onto  $\CCCC$ that satisfies
\begin{equation*}\label{eq:GBR}
\textrm{$G(Q, t_0)\in \BR$ for  $t_0\in I$}
\;\;\Longrightarrow\;\;
\textrm{$G(Q, t)\in B(\RR)$ for all $t\in [t_0, 1]$}.
\end{equation*}
First we make a strong deformation retraction of each unbounded chamber to its boundary,
so that $ \AtR$ is retracted onto the union  $\CCCC\cup \BR$.
Next we make a strong deformation retraction of $\CCCC\cup \BR$ onto $\CCCC$.
\par
Thus Lemma~\ref{lem:retract} is proved.
\end{proof}
\begin{proof}[Proof of Lemma~\ref{lem:bouquet}]
We put $m:=|\Chamb|$.
It is enough to show that 
there exists a numbering  $C_1, \dots, C_m$ of elements of $\Chamb$
such that, putting
\[
\CCCC_k:=C_1\cup \dots\cup C_k,
\quad 
K_k:=C_k\cap \CCCC_{k-1}\subset \bdr C_k, 
\]
we have $K_k\ne \bdr C_k$ for $k=2, \dots, m$.
Indeed, if such a sequence $C_1, \dots, C_m$ exists,
then we have
$H_1(\pi\inv (K_k))=H_2(\pi\inv (K_k))=0$
for $k=2, \dots, m$, and hence we obtain 
\[
H_2(\pi\inv(\CCCC_k))=H_2(\pi\inv(\CCCC_{k-1}))\oplus H_2(\pi\inv(C_k))
\]
by the Mayer--Vietoris sequence, and  thus 
\[
H_2(W_{\CCCC})=H_2(\pi\inv(\CCCC_m))=H_2(\pi\inv(C_1))\oplus \cdots \oplus H_2(\pi\inv(C_m))
\]
is proved by induction on $k$. 
We construct the \emph{reversed} sequence $C_m, \dots, C_1$
by the following procedure:
\begin{algorithmic} 
\State Set $k:=m$ and $\CCCCb:=\Chamb$
\While{$k>0$}     
\State{Let $\CCCC_k$ be the union of chambers in $\CCCCb$.}
\State{Note that $\CCCC_k$ is bounded.}
\State{We choose $C\in \CCCCb$
such that an edge $e_k$ of $C$ is a part of $\bdr \CCCC_k$.}
\State{We set $C_k:=C$, remove $C$ from $\CCCCb$, and decrement $k$ by $1$.}
 \EndWhile
\end{algorithmic}
Since $K_k=C_k\cap \CCCC_{k-1}$ is contained in $\bdr C_k$ minus the interior of the edge $e_k$, 
 the obtained sequence satisfies
$K_k\ne \bdr C_k$ for $k=2, \dots, m$.
\end{proof}
\section{Capping hemispheres}\label{sec:cappings}
Recall that 
we have fixed  a defining polynomial $f\colon \AtR\to \RR$ of $\AAA$
and an orientation $\sigmaA$ of $\AtR$,
so that 
a standard orientation $\sigma_C$  is defined for each $C\in \Chamb$.
We calculate the capping hemispheres $H_{C, P}:=H_{C,\sigma_C,  P}$ explicitly.
%
%
\subsection{Good systems of local coordinates}\label{subsec:goodsystem}
Recall from Section~\ref{subsec:goodcoordinates}
that an affine coordinate system $(\xi, \eta)$ of $\AtC$
is \emph{good} if it is compatible with the $\RR$-structure of $\AtC$ and 
the affine coordinates $(x, y):=(\Real \xi, \Real \eta)|\AtR$ of $\AtR$ are compatible with the fixed orientation 
$\sigmaA$.
We refine this notion  to 
the notion of \emph{good local coordinate systems} on various 
 (real and complex) surfaces.
  \par
  Let $P$ be a point in $\SingBC=\Sing\BC$.
  We say that good coordinates $(\xi, \eta)$ of $\AtC$ are \emph{good at $P$}
  if $P$ is the origin $(0,0)$
  and 
  the two lines in $\Arr$ passing through $P$ are defined in $\AtR$ by 
\[
\ell_{a}(\RR) \colon y=a x
\quad\textrm{and}\quad 
\ell_{b}(\RR) \colon y=bx
\]
by some real numbers $a, b$ satisfying  $a<b$.
\begin{remark}\label{rem:arbitrarypair}
For any pair $(a\sprime, b\sprime)$ of real numbers satisfying $a\sprime<b\sprime$, 
we have good coordinates $(\xi, \eta)$ at $P$
such that the pair $(a, b)$ defined above is equal to the given pair $(a\sprime, b\sprime)$.
\end{remark}
 Next we  define local coordinates $(\tilxi, \mu)$ of $\YC$ and  local coordinates $(\tilx, m)$  of $\YR=\betash \AtR$.
 Let $(\xi, \eta)$ be affine coordinates of $\AtC$ good at $P\in \PPP$ as above.
 Let $r$ be a positive real number such that 
the open subset 
\[
U_r:=\set{(\xi, \eta)}{|\xi|<r, \;\; |\eta|<r}
\]
of $\AtC$ 
intersects no lines of $\ArrC$ other than $\ell_a(\CC)$ and $\ell_b(\CC)$:
\begin{equation}\label{eq:Ur}
\parbox{10cm}{for $\ell(\RR)\in \Arr$, if $\ell(\CC)\cap U_r\ne \emptyset$, then 
we have $\ell(\RR)\in \{\ell_a(\RR), \ell_b(\RR)\}$. }
\end{equation}
Then there exists a unique  coordinate system  $(\tilxi, \mu)$ on $\beta\inv(U_r)\subset \YC$ 
such that $\beta\colon \YC\to \AtC$ is given by
\begin{equation}\label{eq:betabytilximu}
(\tilxi, \mu)\mapsto (\xi, \eta)=(\tilxi, \tilxi\mu).
\end{equation}
In the chart $\beta\inv(U_r)$ of $(\tilxi, \mu)$, 
the exceptional curve $E_P$ is defined by $\tilxi=0$,  and 
the strict transforms $\betash\ell_a(\CC)$ and $\betash\ell_b(\CC)$ of $\ell_a(\CC)$ and $\ell_b(\CC)$ 
are defined by $\mu=a$ and $\mu=b$, respectively.
We put
\[
\tilxi=\tilx +\thei \tilu, \quad
\mu=m+\thei s, 
\]
where $\tilx$, $\tilu$, $m$, $s$  are real-valued functions on $\beta\inv(U_r)$.
Then we have 
\[
\YR=\{\tilu=s=0\}
\]
in $\beta\inv(U_r)$, 
and we can regard 
the restrictions of $(\tilx, m)$ to $\YR$ as local coordinates  of the real surface $\YR$.
From now on,  we consider $(\tilx, m)$ as   local coordinates  of $\YR$ by restriction.
\begin{definition}\label{def:goodsystems}
The local coordinate systems 
$(\xi, \eta)$ on $\AtC$, $(\tilxi, \mu)$ on $\YC$,  $(x, y)$ on $\AtR$,  and $(\tilx, m)$ on $\YR$, 
are called \emph{good local coordinate systems at $P$}.
\end{definition}
\subsection{The orientation of \texorpdfstring{$\YR$}{Y(R)}}
Let $\betaRR\colon \YR \to \AtR$ denote  the restriction of $\beta$ to $ \YR$,
which is a local isomorphism
 at every point of
\[
 \YR\spcirc:=\beta\inv(\AtR\setminus \SingBC). 
\]
Hence the fixed orientation $\sigmaA$ of $\AtR$
induces an orientation on $ \YR\spcirc$ via $\betaRR$,
which we denote by $\betaRR^*\sigmaA$.
\begin{lemma}\label{lem:betaorientation}
Let $Q$ be a point of $\beta\inv (U_r)\cap  \YR\spcirc$.
If $\tilx>0$ at $Q$,
then the ordered pair $(\partial/\partial \tilx, \partial/\partial m)$ of tangent vectors of 
$\YR$ at $Q$ is positive with respect to $\betaRR^*\sigmaA$,
whereas if  $\tilx<0$ at $Q$, 
then  $(\partial/\partial \tilx, \partial/\partial m)$  is negative with respect to $\betaRR^*\sigmaA$.
\end{lemma}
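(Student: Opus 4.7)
The plan is to reduce the lemma to a Jacobian calculation for $\betaRR$ in the good local coordinates at $P$.

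The first step is to restrict the blow-up formula~\eqref{eq:betabytilximu} $(\xi,\eta)=(\tilxi,\tilxi\mu)$ to the real loci $\YR=\{\tilu=s=0\}$ and $\AtR=\{u=v=0\}$. On $\YR$ we have $\tilxi=\tilx$ and $\mu=m$, so the restriction expresses $\betaRR$ in the coordinates $(\tilx,m)$ on $\YR$ and $(x,y)$ on $\AtR$ simply as
\[
(\tilx,m)\longmapsto (x,y)=(\tilx,\tilx m).
\]
Its Jacobian matrix is $\begin{pmatrix} 1 & 0 \\ m & \tilx \end{pmatrix}$, with determinant $\tilx$.

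The second step is to translate the sign of this Jacobian into an orientation statement. Since $Q\in\YR\spcirc$ lies off the exceptional divisor, we have $\tilx\ne 0$ at $Q$, and $\betaRR$ is a local diffeomorphism there; by definition, $\betaRR^*\sigmaA$ is positive on an ordered basis of tangent vectors of $\YR$ at $Q$ iff its image under $d\betaRR$ is positive for $\sigmaA$. Because $(\xi,\eta)$ is good at $P$, the basis $(\partial/\partial x,\partial/\partial y)$ is by definition positive for $\sigmaA$. The pushforward basis $(d\betaRR(\partial/\partial\tilx),d\betaRR(\partial/\partial m))=(\partial/\partial x+m\,\partial/\partial y,\ \tilx\,\partial/\partial y)$ then has the same orientation as $(\partial/\partial x,\partial/\partial y)$ precisely when the Jacobian determinant $\tilx$ is positive, and the opposite orientation when $\tilx$ is negative. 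This is exactly the assertion of the lemma.

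I do not foresee any real obstacle here: the argument is a one-line sign computation in the blow-up chart, and the assumption that the coordinates are good at $P$ is precisely what synchronizes the orientation conventions on $\AtR$, $\YR$, and $\sigmaA$ so that the sign of $\tilx$ alone controls the result.
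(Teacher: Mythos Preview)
Your proof is correct and follows exactly the approach of the paper, which simply notes that $\betaRR$ is given by $(\tilx,m)\mapsto(\tilx,\tilx m)$ and leaves the Jacobian computation to the reader. You have spelled out precisely that computation.
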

\begin{proof}
The map $\betaRR\colon \YR \to \AtR$ is given by 
$(\tilx, m)\mapsto (x, y)=(\tilx, \tilx m)$.
We can calculate $\betaRR^*\sigmaA$  by this formula.
\end{proof}
The restriction of the function $\mu$ to $E_P$ is an affine coordinate of  $E_P$.
Let $Q_{\infty}\in E_P$ be the point $\mu=\infty$ of $E_P$.
We consider the $m$-axis
\[
(E_P\setminus\{Q_{\infty}\})\cap \YR =\{\tilx=0\}
\]
in the chart $\beta\inv (U_r)\cap  \YR$  of $(\tilx, m)$.
%
\begin{corollary}\label{cor:downward}
The orientation $\betaRR^*\sigmaA$ on $\YR\spcirc$ induces 
the downward orientation on the $m$-axis $(E_P\setminus\{Q_{\infty}\})\cap \YR$,
that is, the direction that $m$ decreases is positive with respect to the orientation on the $m$-axis 
induced by $\betaRR^*\sigmaA$.
\end{corollary}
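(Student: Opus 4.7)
The plan is to read off the induced orientation on the $m$-axis by viewing it as the boundary of a half-plane inside the local chart of $\YR$ and applying the outward-normal-first convention, then using Lemma~\ref{lem:betaorientation} to convert everything into concrete frames.

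More precisely, in the good local coordinate chart $(\tilx, m)$ on $\YR$ at $P$, the $m$-axis is $\{\tilx = 0\}$ and the open set $\YR\spcirc$ locally splits into the two components $\{\tilx > 0\}$ and $\{\tilx < 0\}$. I would view the $m$-axis as the boundary of the closed half-plane $\{\tilx \geq 0\}$ in $\YR$, so that the outward-pointing normal along the $m$-axis is $-\partial/\partial \tilx$. By Lemma~\ref{lem:betaorientation}, on the interior $\{\tilx > 0\}$ the frame $(\partial/\partial \tilx, \partial/\partial m)$ is positive for $\betaRR^*\sigmaA$; flipping both entries yields the positive frame $(-\partial/\partial \tilx, -\partial/\partial m)$. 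By the outward-normal-first convention, the boundary orientation is represented by the tangent vector $v$ satisfying that $(-\partial/\partial \tilx,\, v)$ is positive, so $v = -\partial/\partial m$, which is exactly the downward direction.

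To confirm this is well-defined (and not an artefact of choosing the $\tilx \geq 0$ side), I would redo the computation from the other side. On $\{\tilx < 0\}$, Lemma~\ref{lem:betaorientation} says that $(\partial/\partial \tilx, \partial/\partial m)$ is negative, so $(\partial/\partial \tilx, -\partial/\partial m)$ is positive; viewing the $m$-axis now as the boundary of $\{\tilx \leq 0\}$, the outward normal is $+\partial/\partial \tilx$, and the induced boundary vector is again $-\partial/\partial m$.

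The whole argument is essentially a bookkeeping exercise once one fixes the convention, so there is no real obstacle; the only point requiring a bit of care is the interpretation of ``induces'' as the boundary orientation from either adjacent half of $\YR\spcirc$, and verifying that both halves yield the same answer so that the induced orientation is intrinsically attached to the $m$-axis.
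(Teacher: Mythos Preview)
Your proof is correct and is essentially the same argument as the paper's, just written out formally: the paper phrases Lemma~\ref{lem:betaorientation} as ``counter-clockwise on $\tilx>0$, clockwise on $\tilx<0$'' and reads off the induced boundary orientation from a picture, while you unwind the same data via the outward-normal-first convention and check both half-planes explicitly. The content and the use of Lemma~\ref{lem:betaorientation} are identical.
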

\begin{proof}
See Figure~\ref{fig:downward} of $\YR$,
in which $\partial/\partial \tilx$ is rightward 
and $\partial/\partial m$ is upward.
The orientation $\betaRR^*\sigmaA$ on $\YR\spcirc$  is 
 counter-clockwise 
 $\circlearrowleft$  in the region $\tilx>0$, and 
is  clockwise 
$\circlearrowright$ 
in the region $\tilx<0$.
Both of them induce a downward orientation on the $m$-axis.
\end{proof}
\begin{figure}
\begin{tikzpicture}[x=6mm, y=6mm]
\draw[->,thick](4,4)--(4,2);
\draw[thick](4,2)--(4,0);
\node at (2.7, 4.5) [right] {$E_P\cap \YR$};
\node at (1.8, 2) [right] {{\Huge$\circlearrowright$}};
\node at (5, 2) [right] {{\Huge$\circlearrowleft$}};
\draw[->](9,1)--(10,1);
\draw[->](9,1)--(9,2);
\node at (10,1) [right] {$\partial/\partial \tilx$};
\node at (9,2) [above] {$\partial/\partial m$};
\node at (-3,1) [right] {\phantom{a}};
\end{tikzpicture}
\caption{Orientation $\betaRR^*\sigmaA$}
\label{fig:downward}
\end{figure}
\subsection{Capping hemispheres}\label{subsec:capping}
Let $C$ be a bounded chamber such that $P\in \Vertexes (C)$.
We use 
the good local coordinate systems at $P$ given in the previous section.
We calculate the capping hemisphere $H_{C, P}=H_{C, \sigma_C, P}$
explicitly.
We have the following  four cases:
locally around $P$ in $\AtR$, the chamber $C$ is equal to the region 
\begin{equation}\label{eq:Ccases}
\renewcommand{\arraystretch}{1.2}
\begin{array}{cl}
\textrm{Case (1):} & \textrm{$ax \le y\le b  x$,} \\
\textrm{Case (2):} & \textrm{$ax \le y$ and $ b  x\le y$,} \\
\textrm{Case (3):} & \textrm{$ax \ge y\ge b  x$,} \\
\textrm{Case (4):} & \textrm{$ax \ge y$ and $ b  x\ge y$.} \\
\end{array}
\end{equation}
See Figure~\ref{fig:chambercases}.
\begin{remark}\label{rem:fourcases}
We can choose, without loss of generality, 
good coordinates $(\xi, \eta)$ of $\AtC$ at $P$ such that 
Case (1) occurs.
\end{remark}
\begin{figure}
\begin{tikzpicture}
\draw (0,0)--(5,2);
\draw (0,2)--(5,0);
\fill [black] (2.5,1) circle [x radius=.07, y radius=.07, rotate=0];
\node at (2.9,1)  [right] {$P$};
\node at (4.4,1)  [right] {(1)};
\node at (2.2,2) [right] {(2)};
\node at (0.5,1) [right] {(3)};
\node at (2.2,0) [right] {(4)};
\node at (5.3,-.2)  [right] {$\ell_a(\RR)$};
\node at (5.3,2) [right] {$\ell_b(\RR)$};
\end{tikzpicture}
\caption{Location of chambers}\label{fig:chambercases}
\end{figure}
In terms of the local coordinates $(\tilx, m)$ of $\YR$,
the closed region $\betash C$ is given in $\YR$ as follows:
\begin{equation*}\label{eq:Ccases2}
\renewcommand{\arraystretch}{1.2}
\begin{array}{cl}
\textrm{Case (1):} & \betash C=\set{(\tilx, m)}{\tilx\ge 0, \;\;a\le m\le b}, \\
\textrm{Case (2):} & \betash C=\set{(\tilx, m) }{(\tilx\ge 0 \;\rmand\; b\le m) \;\rmor\; (\tilx\le 0 \;\rmand\; m\le a)},  \\
\textrm{Case (3):} &  \betash C=\set{(\tilx, m) }{\tilx\le 0, \;\;a\le m\le b},  \\
\textrm{Case (4):} & \betash C=\set{(\tilx, m)}{(\tilx\le 0 \;\rmand\; b\le m) \;\rmor\; (\tilx\ge 0 \;\rmand\; m\le a)}. 
\end{array}
\end{equation*}
See Figure~\ref{fig:betashC}.
\begin{figure}
\begin{tikzpicture}[x=5.25mm, y=5.25mm]
\draw[fill=gray!30, line width=.0pt](3,2)--(6,2)--(6,4)--(3,4);
\draw[->,thick](3,6)--(3,5);
\draw[->,thick](3,5)--(3,3);
\draw[->,thick](3,3)--(3,1);
\draw[thick](3,1)--(3,0);
\draw[thick](0,4)--(6,4);
\draw[thick](0,2)--(6,2);
\node at (1.55, 6.5)  [right] {$E_P\cap \YR$};
\node at (3.1, 2.45)  [right] {$Q_a$};
\fill [black] (3,2) circle [x radius=.1, y radius=.1, rotate=0];
\node at (3.1, 4.45)  [right] {$Q_b$};
\fill [black] (3,4) circle [x radius=.1, y radius=.1, rotate=0];
\node at (6.3, 4)  [right] {$\betash\ell_b (\CC)$};
\node at (6.3, 2)  [right] {$\betash\ell_a (\CC)$};
\node at (2.2,-1)  [right] {Case (1)};
\end{tikzpicture}
\qquad\qquad
\begin{tikzpicture}[x=5.25mm, y=5.25mm]
\draw[fill=gray!30, line width=.0pt](6,4)--(3,4)--(3,6)--(6,6);
\draw[fill=gray!30, line width=.0pt](0,0)--(3,0)--(3,2)--(0,2);
\draw[->,thick](3,6)--(3,5);
\draw[->,thick](3,5)--(3,3);
\draw[->,thick](3,3)--(3,1);
\draw[thick](3,1)--(3,0);
\draw[thick](0,4)--(6,4);
\draw[thick](0,2)--(6,2);
\node at (1.55, 6.5)  [right] {$E_P\cap \YR$};
\node at (3.1, 2.45)  [right] {$Q_a$};
\fill [black] (3,2) circle [x radius=.1, y radius=.1, rotate=0];
\node at (3.1, 4.45)  [right] {$Q_b$};
\fill [black] (3,4) circle [x radius=.1, y radius=.1, rotate=0];
\node at (6.3, 4)  [right] {$\betash\ell_b (\CC)$};
\node at (6.3, 2)  [right] {$\betash\ell_a (\CC)$};
\node at (2.2,-1)  [right] {Case (2)};
\end{tikzpicture}
\vskip .7cm
\begin{tikzpicture}[x=5.25mm, y=5.25mm]
\draw[fill=gray!30, line width=.0pt](0,2)--(3,2)--(3,4)--(0,4);
\draw[->,thick](3,6)--(3,5);
\draw[->,thick](3,5)--(3,3);
\draw[->,thick](3,3)--(3,1);
\draw[thick](3,1)--(3,0);
\draw[thick](0,4)--(6,4);
\draw[thick](0,2)--(6,2);
\node at (1.55, 6.5)  [right] {$E_P\cap \YR$};
\node at (3.1, 2.45)  [right] {$Q_a$};
\fill [black] (3,2) circle [x radius=.1, y radius=.1, rotate=0];
\node at (3.1, 4.45)  [right] {$Q_b$};
\fill [black] (3,4) circle [x radius=.1, y radius=.1, rotate=0];
\node at (6.3, 4)  [right] {$\betash\ell_b (\CC)$};
\node at (6.3, 2)  [right] {$\betash\ell_a (\CC)$};
\node at (2.2,-1)  [right] {Case (3)};
\end{tikzpicture}
\qquad\qquad
\begin{tikzpicture}[x=5.25mm, y=5.25mm]
\draw[fill=gray!30, line width=.0pt](0,4)--(3,4)--(3,6)--(0,6);
\draw[fill=gray!30, line width=.0pt](3,2)--(6,2)--(6,0)--(3,0);
\draw[->,thick](3,6)--(3,5);
\draw[->,thick](3,5)--(3,3);
\draw[->,thick](3,3)--(3,1);
\draw[thick](3,1)--(3,0);
\draw[thick](0,4)--(6,4);
\draw[thick](0,2)--(6,2);
\node at (1.55, 6.5)  [right] {$E_P\cap \YR$};
\node at (3.1, 2.45)  [right] {$Q_a$};
\fill [black] (3,2) circle [x radius=.1, y radius=.1, rotate=0];
\node at (3.1, 4.45)  [right] {$Q_b$};
\fill [black] (3,4) circle [x radius=.1, y radius=.1, rotate=0];
\node at (6.3, 4)  [right] {$\betash\ell_b (\CC)$};
\node at (6.3, 2)  [right] {$\betash\ell_a (\CC)$};
\node at (2.2,-1)  [right] {Case (4)};
\end{tikzpicture}
\caption{$\betash C$ on $\YR$}\label{fig:betashC}
\end{figure}
Recall that the open subset $U_r$ of $\AtC$ is defined in such a way that~\eqref{eq:Ur} holds.
Hence the  pullback $ \beta^* f$ of 
the defining polynomial $f\colon \AtC\to \CC$ of $\Arr$ by $\beta$
is written as 
 \[
 \beta^* f=u_P\cdot \tilxi^2\cdot (\mu-a)\cdot (\mu-b)
 \]
 in the chart $\beta\inv(U_r)$ of $(\tilxi, \mu)$, 
 where $u_P$ is a complex-valued continuous function on $\beta\inv(U_r)$ that
 has \emph{no zeros},  takes values in $\RR$ on $\beta\inv(U_r)\cap \YR$, and is constant on $\beta\inv(U_r)\cap E_P$.
 We denote by  
\[
c_P \in \RR\setminus \{0\}
\]
 the value of $u_P$ on $E_P$.
Note that $\beta\inv(U_r)$ is simply connected.
Hence we can define 
a complex-valued continuous function $v_P$  on $\beta\inv(U_r)$   such that  $v_P^2=u_P$ and that 
\begin{equation}\label{eq:uP}
v_P|(\beta\inv(U_r)\cap \YR)  \textrm{\;\;takes values in }
\begin{cases}
 \posR & \textrm{if $c_P \in \posR$,}\\
\thei\posR & \textrm{if $c_P \in \negR$.}
\end{cases}
\end{equation}
Recall from Section~\ref{subsec:StandardOrientations}  that $\omega$ is the function on $W$ such that 
the  double covering $\pi\colon W\to \AtC$ is given by $\omega^2=f$.
We regard $\omega$ as a function on $X$ by $\rho\colon X\to W$.
We also regard $v_P$, $\tilxi$, $\mu$ as functions on 
$\phi\inv (\beta\inv(U_r))\subset X$ by $\phi\colon X\to \YC$.
We then put 
\begin{equation}\label{eq:defzeta}
 \zeta:= \frac{\omega}{v_P \cdot \tilxi \cdot (\mu -b)},
\end{equation}
 which is a  meromorphic function on $\phi\inv (\beta\inv(U_r))\subset X$.
 Then we have
\begin{equation}\label{eq:zetasq}
 \zeta^2=\frac{\mu-a}{\mu-b},
\end{equation}
 which gives the double covering $\phi\colon X\to \YC$.
 We put
\[
z:=\zeta | D_P.
\]
Let $Q_a$ (resp.~$Q_b$) be the intersection point
of $E_P$ and $\betash\ell_a(\CC)$ (resp.~$\betash\ell_b(\CC)$).
The restriction $\phi|D_P\colon D_P\to E_P$  of $\phi$ to $D_P\subset X$ is a double covering whose branch points are $Q_a$ and $Q_b$.
Let $\tilQ_a\in D_P$   (resp.~$\tilQ_b\in D_P$)  be the point of $D_P$ lying over $Q_a$ (resp.~$Q_b$). 
Then $z$ is an affine coordinate of the Riemann sphere $D_P$
such that  $\tilQ_a$ and $\tilQ_b$ are  given by $z=0$ and $z=\infty$, respectively.
 The main result of this section is as follows:
  \begin{table}
 \[
 \renewcommand{\arraystretch}{1.4}
 \begin{array}{cc|c}
C  & f(C\spcirc)  & H_{C,  P}  \\
\hline 
\textrm{Case } (1) & f(C\spcirc) \subset \posR &   \Real z\le 0 \\
\textrm{Case } (1) & f(C\spcirc) \subset \negR &   \Real z\ge 0 \\
\textrm{Case } (2) & f(C\spcirc) \subset \posR &   \Imag z\le 0 \\
\textrm{Case } (2) & f(C\spcirc) \subset \negR &   \Imag z\le 0 \\
\textrm{Case } (3) & f(C\spcirc) \subset \posR &   \Real z\ge 0 \\
\textrm{Case } (3) & f(C\spcirc) \subset \negR &   \Real z\le 0 \\
\textrm{Case } (4) & f(C\spcirc) \subset \posR &   \Imag z\ge 0 \\
\textrm{Case } (4) & f(C\spcirc) \subset \negR &   \Imag z\ge 0 
 \end{array}
 \]
 \vskip .7mm
 \caption{Capping hemispheres}\label{table:hemispheres}
 \end{table}
 \begin{proposition}\label{prop:cappinghemispheres}
 The capping hemisphere $H_{C,  P}$ is given as in Table~\ref{table:hemispheres}.
 \end{proposition}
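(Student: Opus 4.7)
The plan is a direct case-by-case computation in the good local coordinates at $P$ introduced in Section~\ref{subsec:goodsystem}. Along $J_{C,P}\subset E_P\cap\YR$ the coordinate $\mu$ is real, and equation~\eqref{eq:zetasq} gives $z^{2}=(m-a)/(m-b)$. This is negative when $m\in(a,b)$ (Cases~(1) and~(3)), so $S_{C,P}$ is the imaginary axis of the Riemann sphere $D_P$ (compactified at $\tilQ_b$), and it is nonnegative when $m$ lies outside $[a,b]$ (Cases~(2) and~(4), where $J_{C,P}$ passes through $Q_\infty$), making $S_{C,P}$ the real axis of $D_P$. In the first situation the two candidate hemispheres are $\{\Real z\ge 0\}$ and $\{\Real z\le 0\}$, in the second they are $\{\Imag z\ge 0\}$ and $\{\Imag z\le 0\}$.

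Next I determine which arc of $S_{C,P}$ is the positive-sheet arc. The identity $\beta^{*}f=u_{P}\tilxi^{2}(\mu-a)(\mu-b)$, together with the sign of $(\mu-a)(\mu-b)$ on $C^{\circ}$ (negative in Cases~(1),~(3); positive in Cases~(2),~(4)), fixes the sign of $c_{P}$ from the sign of $f$ on $C^{\circ}$, and~\eqref{eq:uP} then pins down $v_{P}$ as a positive real or a positive imaginary function on $\betash C\cap\YR$. Plugging the case-prescribed signs of $\tilxi$ and $\mu-b$ into~\eqref{eq:defzeta} and imposing the positive-sheet condition of Definition~\ref{def:sigmaC} ($\omega>0$ if $f>0$ on $C^{\circ}$, $\omega/\thei>0$ if $f<0$) yields the sign of $\Real\zeta$ or $\Imag\zeta$ on the positive sheet, which picks out the positive-sheet arc of $S_{C,P}$. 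In Cases~(2) and~(4), the two sub-regions of $\betash C$ with $\tilxi>0$ and $\tilxi<0$ contribute the two halves of this arc, meeting above $Q_\infty$.

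Because $\phi$ is orientation-preserving on the positive sheet by Definition~\ref{def:sheets}, the boundary orientation induced on the positive-sheet arc by $\sigma_{C}$ is the $\phi$-lift of the induced boundary orientation of $\betash C$ along $J_{C,P}$, which by Corollary~\ref{cor:downward} is the downward direction $-\partial/\partial m$. Differentiating $z^{2}=(m-a)/(m-b)$ gives $dz/dm=-(b-a)/(2z(m-b)^{2})$, whose sign on the positive sheet is already determined by the previous step, and this turns $-\partial/\partial m$ into an explicit direction along $S_{C,P}$ in the $z$-plane. The complex-structure boundary orientation of each hemisphere on $S_{C,P}$ is ``walk with the hemisphere on the left''; by Definition~\ref{def:capping} the capping hemisphere is the one producing the \emph{opposite} direction to the one just computed, and reading this off gives the corresponding entry of Table~\ref{table:hemispheres}.

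The main obstacle is the sign bookkeeping: each of the eight entries requires independent tracking of the signs of $c_{P}$, $v_{P}$, $\tilxi$, $\mu-b$ and $dz/dm$, and in Cases~(2) and~(4) one must additionally check that the directions computed in the two sub-regions of $\betash C$ patch into a single consistent orientation on the circle $S_{C,P}$ across the point above $Q_\infty$.
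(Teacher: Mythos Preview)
Your proposal is correct and follows essentially the same route as the paper: both arguments track signs through the eight cases using the good local coordinates, identify the positive-sheet arc $K_{C,P}$ via~\eqref{eq:defzeta} and~\eqref{eq:uP}, invoke Corollary~\ref{cor:downward} to fix the $m$-decreasing direction on $J_{C,P}$, lift this to an orientation on $S_{C,P}$, and then read off $H_{C,P}$ as the hemisphere inducing the opposite orientation. The only cosmetic difference is that the paper converts the $m$-direction to the $z$-direction by inspecting the endpoints ($z=0$ over $Q_a$, $z=\infty$ over $Q_b$) rather than by differentiating $z^{2}=(m-a)/(m-b)$ as you do, and it records the intermediate sign data in a single table (Table~\ref{table:functions}) rather than describing each step verbally.
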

 \begin{proof}
To ease the notation,
 we put
 \[
 \rP:=\nonnegR\cup\{\infty\}, 
 \quad \iP:=\thei \nonnegR\cup\{\infty\}.
 \]
 Recall that $\phi\inv(\beta\inv (C\spcirc))=\rho\inv (\pi\inv (C\spcirc))$ has two connected components,
 which we call sheets.
 We denote  by $(\phi\inv\beta\inv C\spcirc)_+$
 the sheet on which we have 
 \begin{equation}\label{eq:omega}
 \begin{cases}
 \omega \in \rP \;\textrm{holds}& \textrm{if $f\in P$ on $C$}, \\
 \omega \in \iP  \;\textrm{holds}& \textrm{if $f\in -P$ on $C$}.
 \end{cases}
 \end{equation}
Recall~Definition~\ref{def:sigmaC}  of the standard orientation $\sigma_C$.
The sheet $(\phi\inv\beta\inv C\spcirc)_+$ is the positive-sheet,  
that is, 
the restriction of 
  $\phi$  to $(\phi\inv\beta\inv C\spcirc)_+$ is an orientation-preserving isomorphism 
 from $(\phi\inv\beta\inv C\spcirc)_+$ with orientation $\sigma_C$ to
the open subset  $\beta\inv (C\spcirc)$ of $\YR$ with orientation $\betaRR^*\sigmaA$.
 \par
 By the locations of $C$ and the signs of $f(C)$,
 we have eight cases,
 which are given by the rows of Table~\ref{table:hemispheres}.
 For each case, 
 we calculate values at points of $(\phi\inv\beta\inv C\spcirc)_+$
 of the functions
 \begin{align*}
 g & := (m-a)(m-b), \\
 u_P & \phantom{:}=\beta^*f/(\tilx^2 g), \\
v_P  &\quad  \textrm{as defined by~\eqref{eq:uP}}, \\
 h & := \, \tilx \cdot (m-b), \\
 \omega &\quad  \textrm{as defined by~\eqref{eq:omega}}, \\
 \zeta &  = \omega/(v_P  h).
 \end{align*}
The results are given in Table~\ref{table:functions}.
The intersection 
\[
K_{C, P}:=D_P \cap \overline{(\phi\inv\beta\inv C\spcirc)_+}
\]
of $D_P$ and the closure of $(\phi\inv\beta\inv C\spcirc)_+$ in $X$
is an arc on $D_P$  connecting the ramification points $\tilQ_a$ and $\tilQ_b$ of $\phi|D_P\colon D_P\to E_P$.
In terms of the parameter $z$ on $D_P$,
the arc $K_{C, P}$ is the closed arc connecting 
$z=0$ and $z=\infty$ 
along the closure of the range of the function $\zeta$ on $(\phi\inv\beta\inv C\spcirc)_+$.
Since we have calculated this range above,
we can describe  
$K_{C, P}$ in terms of $z$.
See the column~$K_{C, P}$ of Table~\ref{table:functions}.
Since the deck transformation of 
the double covering $\phi|D_P\colon D_P\to E_P$ is
given by $z\mapsto -z$,
we have
\begin{equation}\label{eq:KcupmK}
S_{C, P}=D_P\cap \pants(C) = K_{C, P}\cup (-K_{C, P}).
\end{equation}
The covering $\phi|D_P$
maps $K_{C, P}$
to 
$J_{C, P}=E_P\cap \betash C$,
which  is a segment of the $m$-axis with 
$Q_{\infty}$ 
being  added.
The orientation $\sigma_C$ on $(\phi\inv\beta\inv C\spcirc)_+$  induces an orientation on $K_{C, P}$,
and hence on $\phi(K_{C, P})=J_{C, P}$.
Since $\phi$ induces an orientation-preserving isomorphism from 
$(\phi\inv\beta\inv C\spcirc)_+$ with orientation $\sigma_C$ to 
$\beta\inv (C\spcirc)$ with orientation $\betaRR^*\sigmaA$, 
Corollary~\ref{cor:downward} implies that 
this orientation  on $J_{C, P}$ is downward, that is, 
 \begin{align*}
 Q_b \searrow Q_a & \quad \textrm{in Cases (1) and (3)}, \\
 Q_a \searrow Q_{\infty}\searrow Q_b &\quad \textrm{in Cases (2) and (4)}.
 \end{align*}
See Figure~\ref{fig:betashC}.
In the column $\ori_K$ of Table~\ref{table:functions}, this orientation is expressed in terms of 
the parameter $z$.
Combining the computations of $K_{C, P}$ and $\ori_K$,
we obtain the orientation $ \ori_C$ 
on the circle $S_{C, P}$ induced by 
the orientation $\sigma_C$ of $\pants(C)$ by~\eqref{eq:KcupmK}.
In Table~\ref{table:functions},
  the circle $S_{C, P}$ and the orientation $ \ori_C$  are given as follows:
 \begin{align*}
  \downarrow\;\;\; &\textrm{means $S_{C, P}=\{\Real z=0\}$ and the orientation is downward}, \\
  \uparrow\;\;\; &\textrm{means $S_{C, P}=\{\Real z=0\}$ and the orientation is upward}, \\
  \rightarrow\;\;\;  &\textrm{means $S_{C, P}=\{\Imag z=0\}$ and the orientation is rightward}, \\
  \leftarrow\;\;\;  &\textrm{means $S_{C, P}=\{\Imag z=0\}$ and the orientation is leftward}.
\end{align*}
The orientation $\ori_H$ on $S_{C, P}$ given by the complex structure of the  capping hemisphere $H_{C,  P}$
is the opposite of $ \ori_C$.
Thus we obtain $H_{C,  P}$ as in the last column of Table~\ref{table:functions}.
 \begin{table}
 {\footnotesize
 \[
 \renewcommand{\arraystretch}{1.4}
 \begin{array}{cc|cccccc|cccc|c}
C  & \beta^*f  &g& u_P & v_P& h& \omega  &\zeta & K_{C, P} &\ori_K&  \ori_C & \ori_H &H_{C, P}\\
\hline 
  (1) &+ & -& -&  \iP& -& \rP  &\iP &\iP &\infty\to 0   &\downarrow &\uparrow &\Real z \le 0\\
  (1) & - & -& +&   \rP& -& \iP &-\iP &-\iP &\infty\to 0 & \uparrow &\downarrow &\Real z \ge 0  \\
  (2) &+& +& +&   \rP& +& \rP &\rP &\rP&0 \to \infty&\rightarrow &\leftarrow &\Imag z\le 0 \\
  (2) &- &  +& -&   \iP&+&  \iP &\rP &\rP&0 \to \infty&\rightarrow &\leftarrow &\Imag z\le 0 \\
  (3) &+ & -& -&  \iP&  +& \rP &-\iP&-\iP&\infty\to 0&  \uparrow &\downarrow &\Real z \ge 0 \\
  (3) &- &  -& +&  \rP&+&  \iP &\iP&\iP&\infty\to 0 &\downarrow  & \uparrow &\Real z \le 0\\
  (4) &+& +&+&  \rP&-& \rP &-\rP&-\rP &0 \to \infty &\leftarrow  & \rightarrow  &\Imag z\ge 0\\
  (4) &- & +& - &  \iP&-& \iP &-\rP &-\rP&0 \to \infty &\leftarrow  & \rightarrow &\Imag z\ge 0 \\
 \end{array}
 \]
 }
 \vskip .9mm
 \caption{Functions on $(\phi\inv\beta\inv C\spcirc)_+$}\label{table:functions}
 \end{table}
\end{proof}
\begin{corollary}\label{cor:coherent}
The collection $\set{\sigma_C}{C\in \Chamb}$ 
of standard orientations is coherent in the sense of Definition~\ref{def:coherent}.
\end{corollary}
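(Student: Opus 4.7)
The plan is to reduce the coherence check to the explicit list of capping hemispheres computed in Proposition~\ref{prop:cappinghemispheres} and Table~\ref{table:hemispheres}. Suppose $C, C\sprime$ are bounded chambers with $C \cap C\sprime = \{P\}$ for some $P \in \SingBC$. I would fix good local coordinates at $P$ as in Section~\ref{subsec:goodsystem}, so that the two lines $\ell_a(\RR), \ell_b(\RR) \in \Arr$ through $P$ are defined by $y = ax$ and $y = bx$ with $a < b$, and then identify both $H_{C, P}$ and $H_{C\sprime, P}$ by consulting Table~\ref{table:hemispheres}.

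The key combinatorial observation to establish first is that, locally around $P$, the chambers $C$ and $C\sprime$ occupy diagonally opposite regions among the four cases of Figure~\ref{fig:chambercases}: either $C$ is of Case~(1) and $C\sprime$ is of Case~(3), or vice versa, or $C$ is of Case~(2) and $C\sprime$ is of Case~(4), or vice versa. This follows because any two bounded chambers sharing a segment of $\ell_a(\RR)$ or $\ell_b(\RR)$ near $P$ would meet in a one-dimensional set rather than at the single point $P$, and chambers lying in the same local region near $P$ cannot have disjoint interiors.

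The next step is to verify that $f$ takes the same sign on the interiors of $C$ and $C\sprime$. Writing $f = g \cdot \lambda_a \cdot \lambda_b$ with $g := \prod_{i \ne a,b} \lambda_i$, the factor $g$ is continuous and nowhere zero on a small neighborhood of $P$ since no other line of $\Arr$ passes through $P$, so $g$ has constant sign there. When passing from $C$ to $C\sprime$ across $P$, both $\lambda_a$ and $\lambda_b$ change sign, so their product is preserved, and hence so is the sign of $f$.

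Finally, I would read off from Table~\ref{table:hemispheres} that the rows for Cases~(1) and~(3) with matching sign of $f$ give the pair of hemispheres $\{\Real z \le 0\}$ and $\{\Real z \ge 0\}$, while the rows for Cases~(2) and~(4) with matching sign of $f$ give $\{\Imag z \le 0\}$ and $\{\Imag z \ge 0\}$. In each scenario the two hemispheres are distinct, which is precisely the coherence of $\sigma_C$ and $\sigma_{C\sprime}$ at $P$. I expect the only step with any genuine content to be the geometric observation that $C$ and $C\sprime$ occupy diagonally opposite local regions at $P$; the remainder is a direct table lookup.
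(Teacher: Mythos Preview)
Your proposal is correct and follows essentially the same approach as the paper's own proof: both arguments observe that the signs of $f$ on $C$ and $C\sprime$ agree, then appeal to Table~\ref{table:hemispheres} to check that the hemispheres for Cases~(1)/(3) (resp.~Cases~(2)/(4)) with matching sign are distinct. You have simply spelled out in more detail the two points the paper leaves implicit---why $C$ and $C\sprime$ occupy diagonally opposite local regions at $P$, and why the sign of $f$ is preserved---which is a welcome clarification.
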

\begin{proof}
Suppose that  $C\cap C\sprime$ consists of a single point $P$.
Then the signs of $f\res{C}$ and of $f\res{C\sprime}$ are the same.
Looking at Cases~(1) and~(3) (or Cases~(2) and~(4)) of Table~\ref{table:hemispheres},
we see that $H_{C, P}$ and $H_{C\sprime, P}$ are distinct.
\end{proof}
Now Proposition~\ref{prop:coherentcollection} follows immediately from Corollary~\ref{cor:coherent}. 
\qed
\section{Displacement}\label{sec:displacements}
We  construct two types of displacements of vanishing cycles $\stdvan(C)$ in $X$.
One is called  \emph{$C$-displacements},
and the other is called  \emph{$E$-displacements}.
\subsection{Notation and terminology about displacements}\label{subsec:dislpcements}
For a small positive real number $\varee$,  let $\intval$ denote the closed interval $[0, \varee]\subset \RR $.
\begin{definition}\label{def:displacement}
A  \emph{displacement} of a subspace $S$ of a topological space $T$ 
is a continuous map 
\[
d\colon S\times\intval \to T
\]
such that
\begin{itemize}
\item 
 $d(P,0)=P$ for any $P\in S$, and 
 \item the restriction $d_t:=d|(S\times\{t\})$ 
 of $d$ to $S\times\{t\}$  is a homeomorphism from $S$ to  its image $S_t:=d(S\times\{t\})$
 for any $t\in\intval$.
 \end{itemize}
 \end{definition}
We sometimes write 
\[
\set{S_t}{t\in\intval} \quad\textrm{or}\quad \set{d_t}{t\in\intval}   \quad\textrm{or}\quad  d_t\colon S\to T\;\;(t \in\intval)
\]
to denote  the displacement  $d\colon S\times\intval \to T$.
By further abuse of notation,
we often say that \emph{$S_{\varee}$ is a displacement of $S$ in $T$}.
\par
If $S=S_0$ is equipped with an orientation $\sigma_0$,
then a displacement  $S_{\varee}$ of $S$  is also oriented by $\sigma_0$ via the homeomorphism  $d_{\varee}\colon S\to S_{\varee}$.
If $S$ is a topological cycle, 
then  $S_{\varee}$ is also a topological cycle,
and their classes  in the homology group of $T$
are the same.
\par
We say that a displacement $d_t\colon S\to T$ \emph{preserves a subspace  $R$ of $T$}
if $d_t(Q) \in R$ holds  for any $Q\in S\cap R$ and any $t \in\intval$.
We say that $d$ is \emph{stationary} on a subspace $S\sprime \subset S$ if $d(Q, t)=Q$  holds
for any $(Q, t)\in S\sprime\times  \intval$.
A \emph{trivial displacement} is a displacement  that is stationary on the whole space $S$.
\subsection{\texorpdfstring{$C$}{C}-displacement}\label{subsec:Cdislpcements}
Recall from Section~\ref{subsec:translations} that $T(\AtR)$ is the $\RR$-vector space 
of translations of  $\AtR$,
and that $T[\lambda(\RR)]$ is the subspace of translations 
preserving a real affine line $\lambda(\RR)\subset \AtR$.
Let  $C$ be a bounded chamber, and let 
\[
\delta\colon C \to T(\AtR)
\]
be  a continuous function.
\begin{definition}\label{def:edgecondition}
Suppose that 
$\ell_i(\RR)\in \Arr$ defines an edge $C\cap \ell_i(\RR)$  of $C$.
We say that $\delta$ satisfies the \emph{$e$-condition} for the edge  $C\cap \ell_i(\RR)$ if 
 $\delta(Q)\in \Tl{\ell_i(\RR)}$ holds for any $Q\in C\cap \ell_i(\RR)$.
 \end{definition}
\begin{remark}
If $\delta$ satisfies the $e$-condition for every edge of $C$, 
then we have $\delta(P)=0$ 
for every  $P\in \Vertexes  (C)$, 
because $\Tl{\ell_i(\RR)}\cap \Tl{\ell_j(\RR)}=\{0\}$ holds  if $\ell_i(\RR)\cap\ell_j(\RR)=\{P\}$.
\end{remark}
\par
Suppose that  $\delta\colon C \to T(\AtR)$ 
satisfies the $e$-condition for every edge of  $C$.
We  define $d\colon C\times\intval \to   \AtC$ by 
\begin{equation}\label{eq:dbydelta}
d(Q, t):=Q+\thei \delta(Q) t.
\end{equation}
Note that we have 
\begin{equation}\label{eq:pRd}
\prR(d(Q, t))=Q\;\;\textrm{for any}\;\; (Q, t)\in C\times\intval,
\end{equation}
where $\prR\colon \AtC\to \AtR$ is the projection 
that takes the real part of points of $\AtC$ (see Section~\ref{subsec:translations}).
This implies that  $d_t:=d|(C\times\{t\})$ is a homeomorphism from $C$ to  
\[
C_t:=d_t(C)
\]
for any $t\in \intval$, and hence 
$d$ is a displacement of $C$ in $\AtC$.
By the $e$-condition, 
the displacement  $d$ preserves $\ell_i(\CC)$ 
for each $\ell_i(\RR)\in \Arr$ defining edges of $C$.
Since $\SingBC \subset \AtR$ and $\prR(\BC)=\BR$,
 we see from~\eqref{eq:pRd} that
\begin{align}
\label{eq:CspbcircIsPreserved}
& Q\in C\spbcirc =C\setminus \Vertexes  (C)\;\; \Longrightarrow\;\; d(Q, t)\notin \SingBC, \\
\label{eq:CspcircIsBCreserved}
& Q\in C\spcirc =C\setminus (C\cap \BR) \;\; \Longrightarrow\;\; d(Q, t)\notin \BC.
\end{align}
Note that  $\beta\colon \YC\to\AtC$ induces an isomorphism from $\beta\inv( C\spbcirc)$ to $C\spbcirc$.
By~\eqref{eq:CspbcircIsPreserved}, 
there exists a unique  continuous map
 \[
 d\spbcirc \colon\beta\inv( C\spbcirc) \times\intval\to \YC
 \] 
 that fits in the commutative diagram
\begin{equation}\label{eq:dspbcirc}
 \renewcommand{\arraystretch}{1.2}
 \begin{array}{ccc}
\beta\inv( C\spbcirc) \times\intval & \maprightsp{d\spbcirc} & \YC \\
 \mapdownleft {\beta\times \id} && \mapdownright {\beta}\\
  C \times\intval & \maprightsb{d} & \AtC.
 \end{array}
 \end{equation}
 It is obvious that  $d\spbcirc$  is a displacement of $\beta\inv( C\spbcirc)$ in $\YC$.
 We define a map 
 \[
d\spsh \colon \betash C \times\intval\to \YC
 \]
 by the following:
 \[
d\spsh (Q, t):=\begin{cases}
   d\spbcirc(Q, t) & \textrm{if $Q\in  \beta\inv( C\spbcirc)$, }\\
   Q & \textrm{if $Q\in  \betash C\setminus \beta\inv( C\spbcirc)=\bigcup_{P\in \Vertexes (C)}  J_{C, P}$. }
  \end{cases}
 \]
\begin{definition}\label{def:Cdelta1}
We say that $\delta\colon C\to T(\AtR)$ is \emph{regular at $P\in \Vertexes  (C)$}
if $d\spsh$ is continuous at $(Q, t)$
for any  $Q\in J_{C, P}$ and any $t\in \intval$.
We simply say that $\delta$ is \emph{regular} if 
$\delta$ is regular at every vertex $P$ of $C$,
that is, if $d\spsh$ is continuous.
\end{definition}
If $\delta$ is regular,
then $d\spsh$ is  a displacement  of $\betash C$ in $\YC$ 
that is stationary on each $J_{C, P}$, and 
that is preserving  all  $\betash\ell_i(\CC)\subset \betash\YC$,
where $\ell_i(\RR)\in \Arr$ defines an edge of $C$.
In particular, the displacement $d\spsh$ preserves the intersection $\betash C\cap \betash\BC$.
Since 
the branch locus of $\phi\colon \XC\to \YC$ is $\betash \BC$, 
it follows that  $d\spsh$ lifts to  a displacement 
\[
d\sp{\pants} \colon \pants (C) \times \intval \to X
\]
of $\pants (C)$ in $X$
that is stationary on each $S_{C, P}\subset \bdr \pants (C)$.
We have 
\[
 \pants (C)_t=\phi\inv((\betash C)_t)
\]
with the orientation given by the standard orientation $\sigma_C$ of $\phi\inv(\beta\spsh C)$.
Gluing $d\sp{\pants}$ with the trivial displacements of the capping hemispheres $H_{C, P}$,
we obtain a displacement 
\[
d\sp{\stdvan} \colon \stdvan (C) \times \intval \to X
\]
of $\stdvan (C)$ in $X$
that is stationary on each $H_{C, P}$.
\begin{definition}\label{def:Cdelta2}
These displacements 
 $d$, $d\spsh$, $d\sp{\pants}$,  and $d\sp{\stdvan}$ 
 are called 
the  \emph{$C$-displacements associated with a regular continuous map  $\delta\colon C\to T(\AtR)$}
 satisfying the $e$-condition for every edge of $C$.
\end{definition}
We  introduce an operation  of \emph{$q_{\rho}$-modification},
which is useful in obtaining   a regular continuous map $\delta\colon C\to T(\AtR)$.
We use the good  local coordinate systems 
$(\xi, \eta)$ on $\AtC$, $(\tilxi, \mu)$  on  $\YC$,
$(x, y)$ on $\AtR$, and $(\tilx, m)$ 
on  $\YR$,
that are defined in Section~\ref{subsec:goodsystem}.
 We can assume,
 without loss of generality,  that  we are 
 in  Case (1) in~\eqref{eq:Ccases}, so that 
there exist real numbers $a, b$ with $a<b$ such that
$C$ is given by $ax\le y\le bx$ locally around  $P=(0,0)$ in $\AtR$.
We define real-valued functions 
$\delta_x(x, y)$ and $\delta_y(x, y)$
defined in  a small neighborhood of the origin $P$ in 
$\set{(x, y)}{ax\le y\le bx}$
by 
\begin{equation}\label{eq:deltaxdeltay}
\delta(Q)=\delta_x(x(Q), y(Q)) \cdot e_x + \delta_y(x(Q), y(Q))\cdot  e_y,
\end{equation}
where  $(x(Q), y(Q))$ are the coordinates of $Q\in C$,
and 
$e_x, e_y$ are the basis of $T(\AtR)$ given by~\eqref{eq:exey}.
Because $\beta$ restricted to $\YR$  is written as 
\[
(\tilx, m)\mapsto (x, y)=(\tilx, m \tilx), 
\]
the displacement $d\spbcirc\colon \beta \inv(C\spbcirc)\times\intval \to \YC$ defined by the diagram~\eqref{eq:dspbcirc}
 is written 
in terms of
the coordinate systems $(\tilx, m)$ and $(\tilxi, \mu)$ as 
\begin{equation}\label{eq:dspbcirc2}
\begin{split}
&((\tilx, m), t)\mapsto  \\
&\;\; (\tilxi, \mu)=\left(
\tilx+\thei \delta_x(\tilx, m \tilx) \cdot t,
\;\;
\frac{m\tilx+\thei \delta_y(\tilx, m \tilx) \cdot t}{\tilx+\thei \delta_x(\tilx, m \tilx)\cdot t}
\right).
\end{split}
\end{equation}
We introduce an inner-product on the $\RR$-vector space $T(\AtR)$.
Then we have a distance on $\AtR$.
For points $Q, Q\sprime\in \AtR$, we denote by $|QQ\sprime|$ the distance between $Q$ and $Q\sprime$.
Let $\rho$ be a small positive real number such that
$|P\Pp |>2\rho$ holds for any pair of distinct vertexes $P, \Pp$ of $C$.
For each vertex $P\in \Vertexes (C)$, we put
\begin{equation}\label{eq:UPrho}
U_{P,\rho}:=\set{Q\in C}{|P Q|\le \rho}.
\end{equation}
Then each point of $C$ belongs to at most one of $U_{P,\rho}$.
For $Q\in U_{P,\rho}$, let $q_{P,\rho} (Q)$ denote the unique point on
the line segment $PQ$ such that the distance $|P q_{P,\rho}  (Q)|$ between $P$ and $q_{P,\rho}  (Q)$ 
satisfies  
\[
|P q_{P,\rho}  (Q)|=|PQ|^2/\rho.
\]
Since $u\mapsto u^2/\rho$ is a bijection from the closed interval $[0, \rho]\subset \RR$ to itself,
we see that $q_{P,\rho}$ is a self-homeomorphism of $U_{P,\rho}\subset C$.
We construct $q_{\rho}\colon C\to C$ by 
\[
q_{\rho}(Q):=\begin{cases}
q_{P,\rho}(Q) & \textrm{if $Q\in U_{P,\rho}$ for some $P\in \Vertexes (C)$,}\\
Q & \textrm{otherwise.}
\end{cases}
\]
Since $q_{P,\rho} (Q)=Q$ for $Q\in U_{P,\rho}$ with $|PQ|=\rho$,
we see that $q_{\rho}$ is continuous on the whole $C$,  and hence is a self-homeomorphism of $C$.
\begin{definition}\label{def:qmodification}
Let $\delta\colon C\to T(\AtR)$ be a continuous map.
We call the composite $\delta\circ q_{\rho}\colon C\to T(\AtR)$ of $q_{\rho}$ and $\delta$
the \emph{$q_{\rho}$-modification} of $\delta$.
\end{definition}
Note that  $q_{\rho}\colon C\to C$ preserves each edge of $C$.
Hence, 
if $\delta$ satisfies the $e$-condition for every edge of  $C$,
then so does its $q_{\rho}$-modification $\delta\circ q_{\rho}$.
\begin{lemma}\label{lem:qmodification}
Let $\delta\colon C\to T(\AtR)$ be a  continuous map that satisfies the $e$-condition for every edge of  $C$.
Suppose that $\delta$  is affine-linear  in a small neighborhood of each vertex $P$ of $C$.
Then the  $q_{\rho}$-modification $\delta\circ q_{\rho}$ is regular.
\end{lemma}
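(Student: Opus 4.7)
The plan is to work in good local coordinates at a vertex $P \in \Vertexes(C)$ and verify that the map $d\spsh$ extends continuously across the arc $J_{C,P}$ by a direct limit computation based on the formula~\eqref{eq:dspbcirc2}. By Remark~\ref{rem:fourcases} I may assume $C$ is in Case~(1) near $P$, so that $J_{C,P}$ lies entirely in the chart $(\tilxi, \mu)$ of $\YC$ and corresponds to $\tilxi = 0$, $a \le m \le b$.

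First I would use the $e$-condition to deduce that $\delta(P) = 0$: since $P \in \ell_a(\RR) \cap \ell_b(\RR)$, we have $\delta(P) \in \Tl{\ell_a(\RR)} \cap \Tl{\ell_b(\RR)} = \{0\}$. Combined with the hypothesis that $\delta$ is affine-linear near $P$, the real-valued functions $\delta_x, \delta_y$ of~\eqref{eq:deltaxdeltay} are $\RR$-linear near the origin. Next I would compute $q_{P,\rho}$ in coordinates: the conditions $q_{P,\rho}(Q) \in \overline{PQ}$ and $|P\, q_{P,\rho}(Q)| = |PQ|^2/\rho$ give, for $Q = (x, y)$ with $r := \sqrt{x^2+y^2} \le \rho$, the explicit formula $q_{P,\rho}(Q) = (r/\rho)(x, y)$. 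Using linearity of $\delta_x, \delta_y$, the components of the $q_\rho$-modification satisfy
\[
\delta_x(q_{P,\rho}(Q)) = \frac{r}{\rho}\,\delta_x(x, y), \qquad \delta_y(q_{P,\rho}(Q)) = \frac{r}{\rho}\,\delta_y(x, y).
\]

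Now I substitute $x = \tilx$, $y = m\tilx$ in the formula~\eqref{eq:dspbcirc2} describing $d\spbcirc$ in the coordinates $(\tilx, m)$ and $(\tilxi, \mu)$. Since $r = |\tilx|\sqrt{1 + m^2}$ and $\delta_x(\tilx, m\tilx), \delta_y(\tilx, m\tilx)$ are each $O(\tilx)$ (being linear and vanishing at the origin), applying $q_\rho$ produces an extra factor of $r/\rho = O(\tilx)$, so
\[
\delta_x(q_{P,\rho}(\tilx, m\tilx)) = O(\tilx^2), \qquad \delta_y(q_{P,\rho}(\tilx, m\tilx)) = O(\tilx^2)
\]
as $\tilx \to 0$, locally uniformly in $m$ on $[a, b]$ and in $t \in \intval$. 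From~\eqref{eq:dspbcirc2} applied to $\delta \circ q_\rho$ in place of $\delta$, the first coordinate of $d\spbcirc$ becomes $\tilxi = \tilx + \thei t\, O(\tilx^2) \to 0$, while dividing numerator and denominator of the second coordinate by $\tilx$ yields
\[
\mu \;=\; \frac{m + \thei t\, O(\tilx)}{1 + \thei t\, O(\tilx)} \;\longrightarrow\; m \qquad \text{as } \tilx \to 0.
\]
Hence, for any sequence $(Q_n, t_n) \to (Q_0, t_0)$ with $Q_0 \in J_{C,P}$ having $m$-coordinate $m_0$, the image $d\spbcirc(Q_n, t_n)$ converges in the chart to $(0, m_0) = Q_0$; for sequences in $J_{C,P}$ themselves, $d\spsh(Q_n, t_n) = Q_n \to Q_0$ by definition of $d\spsh$. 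This gives continuity of $d\spsh$ at every point of $J_{C,P} \times \intval$, so $\delta \circ q_\rho$ is regular at every vertex, hence regular.

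The key point, and the only real obstacle, is the limit calculation for $\mu$. Without the $q_\rho$-modification, $\delta_x/\tilx$ and $\delta_y/\tilx$ would have the nonzero limits $\beta_x + \gamma_x m$ and $\beta_y + \gamma_y m$, and $\mu$ would tend to $(m + \thei t(\beta_y + \gamma_y m))/(1 + \thei t(\beta_x + \gamma_x m)) \neq m$ in general, so $d\spsh$ would fail to be continuous at $J_{C,P}$. The operation $q_\rho$ is precisely designed to supply the extra factor of $r/\rho \sim |\tilx|$ that promotes the vanishing from linear to quadratic, which is exactly what makes the ratio collapse to $m$ in the limit.
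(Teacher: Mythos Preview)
Your proof is correct and follows essentially the same approach as the paper: both use the formula~\eqref{eq:dspbcirc2}, observe that affine-linearity together with $\delta(P)=0$ makes $\delta_x(\tilx,m\tilx)/\tilx$ and $\delta_y(\tilx,m\tilx)/\tilx$ bounded, and then note that $q_\rho$ contributes an extra factor comparable to $|\tilx|$, forcing the $\mu$-coordinate to converge to $m$. One minor caveat: your explicit formula $r=\sqrt{x^2+y^2}$ presumes the inner product on $T(\AtR)$ is the standard one in the local coordinates $(x_P,y_P)$, which the paper does not assume; but since any two inner products on $T(\AtR)$ are equivalent, $r$ is still $O(|\tilx|)$ uniformly for $m\in[a,b]$, and your $O(\tilx^2)$ estimate and the rest of the argument go through unchanged.
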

\begin{proof}
We define continuous functions $\delta_x\circ q_{\rho}(x, y)$  and $\delta_y\circ q_{\rho}(x, y)$  on $U_{P, \rho}$ by 
\[
\delta \circ q_{\rho} (Q)=(\delta_x\circ q_{\rho})(x(Q), y(Q))  \cdot e_x + (\delta_y\circ q_{\rho})(x(Q), y(Q))\cdot  e_y.
\]
Note that the function $m$ in the coordinates $(\tilx, m)$  of $\YR$ is bounded in a neighborhood of $J_{C, P}$ in $\betash C$.
By the assumption that $\delta$ is affine-linear locally around $P$,
it follows that
\[
|\delta_x (\tilx, m\tilx )|/|\tilx| \quand |\delta_y (\tilx, m\tilx )|/|\tilx|
\]
are bounded in a neighborhood of  $J_{C, P}$ in $\betash C$.
On the other hand, since $|u^2/\rho|$ tends to $0$ 
faster than $|u|$ does as $u\to 0$,
we have 
\[
|(\delta_x\circ q_{\rho}) (\tilx, m\tilx )|/|\tilx|\;\to\; 0,  \quad |(\delta_y\circ q_{\rho})  (\tilx, m\tilx )|/|\tilx| \; \to\; 0, 
\]
as $\tilx\to 0$.
Hence we have 
\[
\frac{m\tilx+\thei (\delta_y\circ q_{\rho})(\tilx, m \tilx) 
\cdot t}{\tilx+\thei (\delta_x\circ q_{\rho})(\tilx, m \tilx)\cdot t}\;\; \to\;\;  m
\]
as $\tilx\to 0$.
Therefore  by~\eqref{eq:dspbcirc2} with $\delta_x$ and $\delta_y$ 
replaced by $\delta_x \circ q_{\rho}$ and $\delta_y\circ q_{\rho}$ respectively, 
we see that,
if $\delta$ is $q_{\rho}$-modified, then $d\spbcirc(Q, t)$ tends to $ Q$ as $Q\in \beta\inv (C\spbcirc)$ approaches  
a point of 
$J_{C, P}\subset \betash C$,
and hence $\delta\circ q_{\rho}$ is regular at $P$.
\end{proof}
 \subsection{\texorpdfstring{$E$}{E}-displacement}\label{subsec:Edisp}
Next we define \emph{$E$-displacements}
of various subspaces in $\YC$ and  $X$.
Let $C$ be a bounded chamber,
and $P$ a vertex of $C$.
\begin{definition}\label{def:tubular}
For a point $Q\in E_P$,
we denote by $\lambda_Q(\CC)$ the complex affine line in  $\AtC$ passing through $P$ 
such that its strict transform 
$\betash \lambda_Q(\CC)$ in $\YC$ intersects $E_P$ at $Q$.
A \emph{tubular neighborhood} of $E_P$  is an  open neighborhood $\NNN_P\subset \YC$ 
of $E_P$ in $\YC$ equipped with a continuous map  
\[
\projNN\colon \NNN_P \to E_P
\]
such that, 
for any $Q\in E_P$, 
the fiber $\projNN\inv (Q)$ is an open disk of 
$\betash \lambda_Q(\CC)$
with center $Q$.
We call $\projNN$ the \emph{projection} of the tubular neighborhood $\NNN_P$.
\end{definition}
Let $\projNN\colon \NNN_P \to E_P$ be a tubular neighborhood of $E_P$.
We consider the inclusion  $E_P\inj \NNN_P$ as a section of $\projNN$,
which we call the \emph{zero section}.
For $Q\sprime \in \projNN\inv (Q)$ and  $t\in I:=[0,1]$,
let $tQ\sprime$ denote the unique point on 
$\projNN\inv (Q)\subset \betash \lambda_Q(\CC)$
such that 
\[
\beta(tQ\sprime)=P+t\cdot\tau_{P, \beta(Q\sprime)},
\]
where $\tau_{P, \beta(Q\sprime)}\in T(\AtC)$ is the translation that maps $P$ to $\beta(Q\sprime)$.
\par
We describe a tubular neighborhood in terms of local coordinates.
Let $(\xi, \eta)$, $(\tilxi, \mu)$, $(x, y)$, $(\tilx, m)$ 
 be good  local coordinate systems of $\AtC$, $\YC$, $\AtR$, $\YR$, respectively, 
 given in Section~\ref{subsec:goodsystem}.
 We can assume, without loss of generality, 
 that the location of $C$ is  Case (1) in~\eqref{eq:Ccases},
 so that there exist real numbers $a, b$ with $a<b$ such that 
 $ \betash C$ is defined  
by $\tilx\ge 0$ and $ a\le m\le b$ in the chart of $(\tilx, m)$ on  $\YR$.
 By definition,  in terms of the coordinates $(\tilxi, \mu)$ of $\YC$, 
 the projection $\projNN$ is given by 
 \[
 \NNN_P\ni (\tilxi, \mu)\;\;\mapsto\;\; (0, \mu) \in E_P,
 \]
and the map $Q\sprime\mapsto t Q\sprime$ is given by
\[
(\tilxi, \mu) \;\;\mapsto\;\; (t \tilxi, \mu).
\]
%
\begin{definition}\label{def:admissiblesection}
A continuous section 
\[
s\colon E_P \to \NNN_P
\]
of $\projNN$ is said to be \emph{admissible with respect to $C$}  
if it satisfies  the following.
\begin{enumerate}[(s1)]
\item There exists one and only one point $Q_0\in E_P$ such that $s(Q_0)\in E_P$.
\item The point $Q_0\in E_P$ is not on  the arc $J_{C, P}=E_P\cap \beta\spsh C$.
\item If $Q\in J_{C, P}$, then $s(Q)\in \betash C$.
\end{enumerate}
\end{definition}
Suppose that $s\colon E_P\to \NNN_P$ is a section 
admissible with respect to $C$.
We can write 
$s$ as
\[
\mu\mapsto (\tilxi, \mu)=(\ss(\mu), \mu)
\]
in  a neighborhood of $J_{C, P}$ in $E_P$, 
where 
$\ss$ is a continuous function 
such that, if $m$ is a real number in the  closed interval $[a, b]$,
then $\ss(m)\in \RR_{>0}$.
 \begin{example}\label{example:constantsection}
Let $c$ be a sufficiently small positive real number.
Then we have 
a section 
$s\colon E_P\to \NNN_P$ admissible with respect to 
$C$ such that the function $\ss(\mu)$ is constantly equal to $c$ in a small neighborhood of $J_{C, P}$ in $E_P$. 
\end{example}
Note that  the image $s(E_P)$ of $s$ and $E_P$ intersect only at $Q_0$, 
and, since $s(E_P)$ and $E_P$ are  homologous in  $\YC$,  the local intersection number at $Q_0$ is 
equal to the self-intersection number $-1$ of $E_P$.
\par
For $t\in I$, 
we denote by
\[
ts\colon E_P\to \NNN_P\subset \YC
\]
the continuous map $Q\mapsto t\cdot s(Q)$.
Let $\vareep$ be a sufficiently  small positive real number.
For $t\in \intvalp$,
let $(E_P)_t\subset \NNN_P$ be the image of $E_P$ by $ts$,
and let $(J_{C, P})_t$ be the image of $J_{C, P}$ by $t s$.
Note that we have 
\[
(J_{C, P})_t=(E_P)_t\cap  \betash C.
\]
Let $Q_a$ (resp.~$Q_b$)  be  the intersection point of $E_P$
and 
$\betash\ell_a(\CC)$ (resp.~$\betash\ell_b(\CC)$).
Then $(J_{C, P})_t$ is an arc on $(E_P)_t$ connecting $ts (Q_a)$ and $ts(Q_b)$.
We put 
\[
(D_P)_t:=\phi\inv ((E_P)_t).
\]
Then  $\phi|(D_P)_t\colon (D_P)_t\to (E_P)_t$ is a double covering whose branch points
are the end points $ts(Q_a)$ and $ts(Q_b)$ of the arc $(J_{C, P})_t$.
Therefore 
\[
(S_{C, P})_t:=\phi\inv((J_{C, P})_t)
\]
is a circle on $(D_P)_t$.
Note that $(S_{C, P})_t$ decomposes $(D_P)_t$ into the union of two closed hemispheres.
Let $(H_{C, P})_t$ denote the hemisphere obtained from 
$(H_{C,  P})_0=H_{C,P}$ by continuity.
Thus, from the  section $s\colon E_P\to \NNN_P$
admissible with respect to $C$,  we obtain  displacements
\begin{align*}
\set{(J_{C, P})_t}{t \in \intvalp} & \textrm{ of $J_{C, P}$ in $\betash C$,}\\
\set{(E_P)_t}{t \in \intvalp} & \textrm{ of $E_P$ in $\YC$,}\\
\set{(D_P)_t}{t \in \intvalp} & \textrm{ of $D_P$ in $X$,}\\
\set{(S_{C, P})_t}{t \in \intvalp} & \textrm{ of $S_{C, P}$ in $\phi\inv(\betash C)$, and }\\
\set{(H_{C,P})_t}{t \in \intvalp} & \textrm{ of $H_{C, P}$ in $X$,}
\end{align*}
which we call \emph{$E$-displacements} associated with $s\colon E_P\to \NNN_P$.
\par
 For positive real numbers $\alpha, \beta$ with $\alpha<\beta$,
 let $g_{\alpha, \beta}\colon \RR_{\ge 0}\to  \RR_{\ge 0}$ denote
the continuous  function defined by
 \[
 g_{\alpha, \beta}(u):=
 \begin{cases}
\alpha+(\beta-\alpha) u/\beta  & \textrm{if $0\le u\le \beta$,}\\
 u & \textrm{if $u\ge \beta$.}
 \end{cases}
 \]
 Then $g_{\alpha, \beta}$ is a homeomorphism from $\RR_{\ge 0}$ to $\RR_{\ge \alpha}$ 
 that is the identity on $\RR_{\ge \beta}$.
\erase{
\begin{figure}
 \begin{tikzpicture}[x=.7cm,y=.7cm]
 \draw [-stealth](1,0) -- (1,6);
  \draw [-stealth](0,1) -- (6,1);
  \draw[very thick] (1,2.3) -- (3,3);
    \draw[very thick]  (3,3) -- (6,6);
        \draw[dashed]  (3,3) -- (3,1);
         \draw[dashed]  (3,3) -- (1,3);
\node at (1,3) [left] {$\beta$};
\node at (3.3,.6) [left] {$\beta$};
\node at (1,2.3) [left] {$\alpha$};
\node at (6,1) [right] {$u$};
\node at (1,6) [above] {$g_{\alpha, \beta}(u)$};
  \end{tikzpicture}
 \caption{Function $g_{\alpha, \beta}$}\label{fig:gaabb}
 \end{figure}
 }
 We choose $\vareep$ so small that the point $(\tilx, m)=(2 \vareep \ss (m), m) $ is 
 in the chart of the coordinate system $(\tilxi, \mu)$ for all $m\in [a, b]$.
Let $U$ denote the chart.
 We then define a displacement 
\begin{equation}\label{eq:displamentofbatashC}
 \betash C \times \intvalp \to  \betash C
\end{equation}
 of $\betash C$ in  $\betash C$ by
  \[
 (Q\sprime, t)\mapsto 
  \begin{cases}
  (g_{t \ss(m), 2t \ss(m)}(\tilx), m) & \textrm{if $Q\sprime=(\tilx, m) \in U\cap \betash C$}, \\
  Q\sprime & \textrm{otherwise.}
  \end{cases}
 \]
Note that this map is continuous because we have $g_{t \ss(m), 2t \ss(m)}(\tilx)=\tilx$ for $\tilx\ge 2t \ss(m)$. 
 Let $(\betash C)_{t}$ be the closed subset of $\betash C$ obtained by
 removing 
 \[
 \set{(\tilx, m)}{a\le m\le b, \;\; 0\le \tilx < t \ss(m)}
 \]
 from $\betash C$.
Then the displacement~\eqref{eq:displamentofbatashC} is a family of maps that shrink
$\betash C$ to $(\betash C)_{t}\subset \betash C$ homeomorphically.
 Note that the displacement~\eqref{eq:displamentofbatashC} 
 preserves the subspaces $\betash C\cap \betash\ell_a(\CC)$ and $\betash C\cap \betash\ell_b(\CC)$.
Putting 
 \[
 \pants(C)_{t}:=\phi\inv ((\betash C)_{t})
 \]
 with the orientation $\sigma_C$, 
we obtain a displacement 
$\set{ \pants(C)_{t}}{t\in \intvalp}$
of $\pants(C)$ in $X$.
We have
\[
\bdr \pants(C)_t=(S_{C, P})_t \;\;\sqcup  \bigsqcup_{P\sprime\in \Vertexes (C)\setminus \{P\}} S_{C, P\sprime}.
\]
Therefore we have a displacement 
\[
\stdvan(C)_{t}\;\;:=\;\; \pants(C)_{t} \;\cup\; \left( (H_{C,  P})_t \;\;\sqcup\;\; \bigsqcup_{P\sprime\in \Vertexes (C)\setminus \{P\}} H_{C, P\sprime}\right)
\]
of the topological  $2$-cycle $\stdvan(C)_0=\stdvan(C)$ in $X$.
These displacements 
\begin{equation}\label{eq:singleE}
\renewcommand{\arraystretch}{1.4}
\begin{array}{cl}
 \set{(\betash C)_{t}}{t\in \intvalp} & \textrm{ of $\betash C$ in $\betash C$,}\\
 \set{\pants(C)_{t}}{t\in \intvalp} & \textrm{ of $\pants(C)$ in $\pants(C)$, and }\\
\set{\stdvan(C)_{t}}{t\in \intvalp}& \textrm{ of $\stdvan(C)$ in $X$ }
\end{array}
\end{equation}
are also called 
the \emph{$E$-displacements} 
associated with the section $s\colon E_P\to \NNN_P$.
%
\par
We can easily extend the definition of $E$-displacements 
to the case where we are given sections at several vertexes.
Note that the displacements in~\eqref{eq:singleE} are stationary outside  small neighborhoods 
of $J_{C, P}$ in $\betash C$,
of $S_{C, P}$ in $\pants(C)$,
and of $H_{C, P}$ in $\stdvan(C)$, respectively.
By choosing sufficiently small $\vareep$, 
we can make these neighborhoods  arbitrarily small.
Therefore, if we are given  sections 
\[
s_i\colon E_{P_i} \to \NNN_{P_i} \;\; (i=1, \dots, k)
\]
admissible with respect to $C$ 
 for distinct vertexes 
$P_1, \dots, P_k$ of $C$, 
we can glue the displacements associated with these $s_i$ together. 
For example, 
let $\UUU(P_i)$ be a neighborhood of $H_{C, P}$ in $\stdvan(C)$
such  that the $E$-displacement $d_{s_i}\colon \stdvan(C)\times \intvalp\to X$ associated with $s_i$ is stationary outside of  $\UUU(P_i)$.
Then we can define  a displacement
\[
d:=d_{s_1, \dots, s_k}\colon \stdvan (C) \times \intvalp  \to X 
\]
by the following: 
\[
d(Q, t):=\begin{cases}
d_{s_i} (Q, t) &\textrm{if $Q\in \UUU(P_i)$ for some $i$, }\\
Q&\textrm{if $Q\notin \UUU(P_i)$ for any $i$.}
\end{cases}
\]
\begin{definition}
These displacements  of $\betash C$ in $\betash C$,
of $\pants(C)$ in $\pants(C)$, and 
of $\stdvan(C)$ in $X$, 
are called the \emph{$E$-displacements}
associated with the continuous sections $s_i\colon E_{P_i} \to \NNN_{P_i}$.
\end{definition}
By construction,
the $E$-displacements associated with these sections $s_i\colon E_{P_i} \to \NNN_{P_i}$ ($i=1, \dots, k$)
satisfy the following:
\begin{equation}\label{eq:disjointafterE}
(\betash C)_{\vareep} \cap E_{P_i}=\emptyset\;\textrm{ and hence }\;
\pants(C)_{\vareep} \cap D_{P_i}=\emptyset
\quad\textrm{for  $i=1, \dots, k$.}
\end{equation}
\section{Proof of Theorem~\ref{thm:intnumbs} (1), (2), (3)}\label{sec:proofofThmB}
We  prove assertions (1), (2), (3) of Theorem~\ref{thm:intnumbs}
for the standard orientations $\sigma_C$  fixed in Section~\ref{subsec:StandardOrientations}.
\subsection{Proof of assertion (1)}\label{subsec:proof(1)}
The case where $P\notin \Vertexes (C)$  is obvious.
 Suppose that  $P\in \Vertexes (C)$.
 As in Section~\ref{subsec:Edisp}, 
 we choose a  section $s\colon E_P\to \NNN_P$ 
 of a tubular neighborhood $\NNN_P\to E_P$ 
 admissible  with respect to $C$, 
 and construct the $E$-displacement $ \set{\stdvan(C)_t}{t\in \intvalp}$
 associated with $s$.
Note that  $\pants(C)_{\vareep}$ is disjoint from $D_P$.
(See~\eqref{eq:disjointafterE}.)
 Recall that $(E_{P})_\vareep$ and $E_{P}$  intersect only at one point $Q_0$,
 and the local intersection number at $Q_0$ is $-1$.
 Since $Q_0\notin J_{C, P}$,
 we see that $(D_{P})_\vareep$ and  $D_{P}$  intersect only at the two points in $\phi\inv(Q_0)$
 and that the local intersection number at each point  is $-1$.
Assuming $\vareep$ to be small enough, 
we have that one of  the two points in $\phi\inv(Q_0)$ is on $(H_{C,  P})_{\vareep}$,
 whereas the other does not belong to $(H_{C, P})_{\vareep}$.
 Hence $\stdvan(C)_{ \vareep}$ and $D_{P}$ intersect only at one point, 
 at which  the local intersection number is $-1$.
 Thus  assertion (1) is proved. 
 \qed
 \subsection{Proof of assertion (2)}\label{subsec:proof(2)}
 Obvious.  \qed
\subsection{Proof of assertion (3)}\label{subsec:proof(3)}
Suppose that $C\cap C\sprime$ consists of a single point $P$.
 Let $(\xi, \eta)$, $(\tilxi, \mu)$,  $(x, y)$, $(\tilx, m)$ 
 be good local coordinates on $\AtC$, $\YC$, $\AtR$, $\YR$, respectively, 
 given in Section~\ref{subsec:goodsystem}.
 Without loss of generality,
 we can assume that the location of $C$ is Case (1) 
 and the location of $C\sprime$ is Case (3) in~\eqref{eq:Ccases}.
 Note that we have
 \[
 J_{C, P}=J_{C\sprime, P}\;\;\textrm{on $E_P$}, \quad  S_{C, P}=S_{C\sprime, P}\;\;\textrm{on $D_P$}.
 \]
By Corollary~\ref{cor:coherent},
 we see that 
 $H_{C, P}$ and $H_{C\sprime, P}$
 are the closures of \emph{distinct} connected components 
 of $D_P\setminus S_{C, P}=D_P\setminus S_{C\sprime, P}$.
 \par
 As in Section~\ref{subsec:Edisp}, 
 we choose a section $s\colon E_P\to \NNN_P$ 
 of  a tubular neighborhood $\NNN_P\to E_P$ 
 admissible with respect to $C$, 
 and construct the $E$-displacement $\stdvan(C)_{\vareep}$  
 associated with $s$.
We illustrate the $E$-displacements of  $\betash C$ and $E_P$ in $\YC$ in Figure~\ref{fig:displacement3},
where $Q_0$ is the zero of the section $s$.
  Since $\betash C\sprime$ and $(\betash C)_{\vareep}$ are disjoint,
 it follows that $\pants(C\sprime)$ and $\pants(C)_{\vareep}$ are disjoint.
  Since $E_P$ and $(\betash C)_{\vareep}$ are disjoint,
we see that $H_{C\sprime, P}$ and $\pants(C)_{\vareep}$ are disjoint.
  Since $\betash C\sprime$ and $(E_{P})_{ \vareep}$ are disjoint,
 it follows that $\pants(C\sprime)$ and  $(H_{C, P})_{\vareep}$ are disjoint.
Note that $D_{P}$ and $(D_{P})_{\vareep}$  intersect only at the two  points in $\phi\inv(Q_0)$.
Since  $H_{C, P}$ and $H_{C\sprime,  P}$ are distinct hemispheres of $D_P$ given by $S_{C, P}=S_{C\sprime, P}$,
and $\vareep$ is sufficiently small, 
neither of the two  points of $\phi\inv(Q_0)$
is on $ H_{C\sprime, P}\cap (H_{C, P})_{ \vareep}$.
Therefore $\stdvan(C\sprime)$ and $(\stdvan(C))_{\vareep}$ are disjoint.
%
\begin{figure}
\begin{tikzpicture}[x=.7cm, y=.7cm]
\fill[lightgray] (-3, 2.5) rectangle (0,5);
\fill[lightgray] (1, 2.5) rectangle (4,5);
\draw[thick] (0, 1)--(0, 9);
\draw[thick] (1, 1)[rounded corners] -- (1, 6)[rounded corners]--(-1,8)--  (-1,9);
\draw[thick] (-3, 5)--(0, 5);
\draw[thick] (1, 5)--(4, 5);
\draw[thick] (-3, 2.5)--(0, 2.5);
\draw[thick] (1, 2.5)--(4, 2.5);
\node at (-0.5, 0.5) [right] {$E_P$};
\node at (0.5, 0.5) [right] {$(E_P)_{\varee\sprime}$};
\node at (0.3,7) [right]{$Q_0$};
\fill (0,7) circle (2pt);
\node at (-2,3.75) [right]{$\betash C\sprime$};
\node at (1.5,3.75) [right]{$(\betash C)_{\varee\sprime}$};
\end{tikzpicture}
\caption{$E$-displacement for the proof of assertion (3)}\label{fig:displacement3}
\end{figure}
 \qed
\section{Proof of Theorem~\ref{thm:intnumbs} (4)}\label{sec:Proof4}
We  prove assertion (4) of Theorem~\ref{thm:intnumbs}
for the standard orientations $\sigma_C$  fixed in Section~\ref{subsec:StandardOrientations}.
We first prove  that
the intersection number $\intf{[\stdvan(C)], [\stdvan(\Cp)]}$
for distinct bounded chambers $C$ and $\Cp$
sharing a common edge
does not depend on  $\Arr$. See~Lemma~\ref{lem:same}.
We then compare 
the intersection number $\intf{[\stdvan(C)], [\stdvan(\Cp)]}$ 
with the intersection number $\intf{[\stdvan\spm(C)], [\stdvan\spm(\Cp)]}\spm$
that is calculated by replacing the defining polynomial $f$ of $\Arr$ with $-f$.
Using~\eqref{eq:orientationreversing}, 
we obtain a proof of assertion~(4) of Theorem~\ref{thm:intnumbs}.
\subsection{A preliminary lemma}\label{subsec:preliminary}
We consider two nodal real line arrangements $\Arr_1$ and $\Arr_2$.
Let $j$ be in $\{1, 2\}$.
Let $\Arr_j$ be a nodal real line arrangement  with 
a defining polynomial $f_j$.
We consider the morphisms
\[
X_j\maprightsp{\rho_j} W_j \maprightsp{\pi_j} \AtC.
\]
Here $W_j$ is defined in $\CC\times \AtC$
by 
\[
\omega_j^2=f_j,
\]
where $\omega_j$ is an affine coordinate of $\CC$, 
and $\pi_j$ denotes the double covering
$(\omega_j, Q)\mapsto Q$.
The morphism $\rho_j\colon X_j\to W_j$ is the minimal desingularization.
Then, for a bounded chamber $C$ of $\Arr_j$,
we have a vanishing cycle $\stdvan(C)$ in $X_j$
with the standard orientation determined by $\sigmaA$ and $f_j$.
We denote by
\[
\intfnull_j\colon H_2(X_j; \ZZ)\times H_2(X_j; \ZZ)\to \ZZ
\]
the intersection form on $X_j$.
\begin{lemma}\label{lem:same}
Let $C_j$ and $\Cp_j$ be distinct bounded chambers
of $\Arr_j$ that share a common edge.
Then we have
\begin{equation}\label{eq:same}
\intf{[\stdvan(C_1)], [\stdvan(\Cp_1)]}_1=\intf{[\stdvan(C_2)], [\stdvan(\Cp_2)]}_2.
\end{equation}
\end{lemma}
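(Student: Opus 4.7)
The plan is to reduce each side of~\eqref{eq:same} to a purely local intersection computation in a neighborhood of the shared edge, and then to match these local pictures by an orientation-preserving diffeomorphism. For each $j\in\{1,2\}$, let $e_j:=C_j\cap\Cp_j$ be the shared edge, with endpoints $P_j,\Pp_j$ that are nodes of the arrangement $\Arr_j$. Using the $E$-displacement construction of Section~\ref{subsec:Edisp}, I would first choose admissible sections of tubular neighborhoods $\NNN_P$ at every vertex $P$ of $C_j$ and $\Cp_j$ other than $P_j$ and $\Pp_j$. Property~\eqref{eq:disjointafterE} applied to the resulting $E$-displacements ensures that $\stdvan(C_j)_{\vareep}$ and $\stdvan(\Cp_j)_{\vareep}$ meet only inside $(\pi_j\circ\rho_j)\inv(U_j)$ for a small open neighborhood $U_j\subset\AtC$ of $e_j\cup\{P_j,\Pp_j\}$; by shrinking $U_j$, I may further assume that the only lines of $\Arr_j$ meeting $U_j$ are the line containing $e_j$ and the two additional lines through $P_j$ and $\Pp_j$.

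Next, using good local coordinates at $P_j$ and $\Pp_j$ (Section~\ref{subsec:goodsystem}), Remark~\ref{rem:arbitrarypair} to harmonize slope data, and an affine interpolation along the interior of $e_j$, I would construct an orientation-preserving real-analytic diffeomorphism $\Phi\colon U_1\to U_2$ compatible with the $\RR$-structure that sends the three relevant complexified lines of $\Arr_1\res U_1$ onto those of $\Arr_2\res U_2$ and maps $(C_1\cap U_1,\Cp_1\cap U_1)$ onto $(C_2\cap U_2,\Cp_2\cap U_2)$. Since the branch loci of $\pi_1$ and $\pi_2$ inside $U_1$ and $U_2$ then correspond under $\Phi$, I would lift $\Phi$ to an orientation-preserving homeomorphism $\widetilde\Phi\colon(\pi_1\circ\rho_1)\inv(U_1)\to(\pi_2\circ\rho_2)\inv(U_2)$ of the resolved double planes over $U_j$. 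By Proposition~\ref{prop:cappinghemispheres}, the four capping hemispheres $H_{C_j,P_j},H_{C_j,\Pp_j},H_{\Cp_j,P_j},H_{\Cp_j,\Pp_j}$ are determined by which of Cases (1)--(4) of~\eqref{eq:Ccases} occurs and by the local sign of $f_j$, and this data is preserved by $\Phi$ up to an overall choice of sheet.

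After composing $\widetilde\Phi$ with the deck transformation of $\pi_2$ if necessary, I would arrange that $\widetilde\Phi$ takes the positive-sheet over $C_1$ to the positive-sheet over $C_2$, matching $\sigma_{C_1}$ with $\sigma_{C_2}$. The main obstacle is then to show that this single choice of lift \emph{simultaneously} matches $\sigma_{\Cp_1}$ with $\sigma_{\Cp_2}$. I would verify this by analytic continuation along a small path in $U_j\setminus B_j(\CC)$ that connects the interior of $C_j$ to that of $\Cp_j$ (for instance an arc moving through the imaginary direction near a regular interior point of $e_j$): such a path lifts uniquely to both double covers once a starting sheet is chosen, and the resulting sheet correspondence over $\Cp_j$ is intrinsic to the local geometry, hence preserved by $\Phi$; consistency at the endpoints is provided by Corollary~\ref{cor:coherent}, which forces the capping hemispheres over $P_j$ and $\Pp_j$ to match in tandem. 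Once this compatibility is established, $\widetilde\Phi$ restricts to an orientation-preserving homeomorphism between the localized cycle pairs on each side, so the local intersection indices agree, yielding~\eqref{eq:same}.
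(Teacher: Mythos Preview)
Your strategy is the same as the paper's—localize near the common edge and transport the picture by an isomorphism of neighborhoods—but two steps in your sketch are not justified and one is actually wrong.

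First, your map $\Phi$ is built by ``good local coordinates at $P_j$ and $\Pp_j$\dots and an affine interpolation along the interior of $e_j$.'' This patching produces at best a smooth orientation-preserving diffeomorphism of real $4$-manifolds, not a biholomorphism. That matters because you then want $\widetilde\Phi$ on the \emph{resolved} double covers $X_j$. The minimal resolution $\rho_j\colon X_j\to W_j$ over an $A_1$ point is a complex-analytic construction; a merely smooth homeomorphism $W_1|_{U_1}\to W_2|_{U_2}$ need not extend across the exceptional $(-2)$-curves to a homeomorphism $X_1|_{U_1}\to X_2|_{U_2}$, and the capping hemispheres—defined via the complex orientation of $D_P$—are not obviously respected. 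The paper avoids this by writing down a single explicit holomorphic map $\psi_j\colon(\xi_0,\eta_0)\mapsto(\xi_0+((1-\xi_0)g_j/2+(1+\xi_0)\gp_j/2)\eta_0,\eta_0)$ from a fixed model neighborhood $U_0$ (three lines $y_0=0$, $x_0=\pm1$) onto $U_j$, sending the three model lines to the three lines of $\Arr_j$ through $e_j$; this is biholomorphic near $e_0$, so the lift to $W_j$ and then to $X_j$ is automatic.

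Second, your sheet-matching argument is the real gap. You fix the lift so that $\sigma_{C_1}\mapsto\sigma_{C_2}$ and then propose to check $\sigma_{\Cp_1}\mapsto\sigma_{\Cp_2}$ by analytic continuation along an arc into the imaginary direction. That continuation determines \emph{some} sheet over $\Cp_j$, but you have not argued that it is the \emph{positive} sheet with respect to the standard orientation $\sigma_{\Cp_j}$ of Definition~\ref{def:sigmaC} (which is a sign convention on $\omega$ or $\omega/\thei$, not a monodromy statement), nor that the answer is the same for $j=1,2$. Your appeal to Corollary~\ref{cor:coherent} is misplaced: coherence concerns chambers meeting at a single vertex, not chambers sharing an edge, and says nothing about the sheets over the interiors of $C_j$ and $\Cp_j$. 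The paper handles this by first normalizing so that $f_j(C_j)\subset\RR_{\ge 0}$ and $f_j(\Cp_j)\subset\RR_{\le 0}$, then observing that $u_j:=f_0/\psi_{jU}^*f_j$ is holomorphic and nonvanishing on $U_0$ with $u_j>0$ on $C_0\cup\Cp_0$; choosing the square root $v_j$ with $v_j>0$ there, the lift $\psi_{jW}\colon(\omega_0,Q_0)\mapsto(\omega_0/v_j(Q_0),\psi_{jU}(Q_0))$ visibly sends positive sheet to positive sheet over \emph{both} $C_j$ and $\Cp_j$ simultaneously. This single sign observation replaces your continuation argument entirely.

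(Minor point: the preliminary $E$-displacement at the other vertices is unnecessary, since $\stdvan(C_j)\cap\stdvan(\Cp_j)$ already lies over a neighborhood of $e_j$.)
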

\begin{proof}
The idea 
is to construct a diffeomorphism 
from an  open neighborhood $\UX{1}$ of $\stdvan(C_1)\cap \stdvan(\Cp_1)$
in $X_1$ to 
an open neighborhood $\UX{2}$ of $\stdvan(C_2)\cap \stdvan(\Cp_2)$
in $X_2$ 
that induces orientation-preserving isomorphisms 
\begin{equation}\label{eq:inducestdvans}
\UX{1} \cap \stdvan(C_1)\;\cong \;\UX{2} \cap \stdvan(C_2)
\quand 
\UX{1} \cap \stdvan(\Cp_1)\;\cong \;\UX{2} \cap \stdvan(\Cp_2).
\end{equation}
Since the intersection number 
of topological cycles is determined by 
the geometric situation  in a neighborhood of the 
set-theoretic intersection of the topological cycles,
this diffeomorphism   proves~\eqref{eq:same}.
In fact, we will construct a  biholomorphic isomorphism $\psi_X\colon \UX{1}\isom\UX{2}$
explicitly.
\par
Since  $C_j$ and $\Cp_j$ are adjacent,
the signs of $f_j$ on $C_j$ and on $\Cp_j$ are opposite.
Interchanging $C_j$ and $\Cp_j$ if necessary,
we can assume without loss of generality that
\[
f_j(C_j)\subset \RR_{\ge 0}, \quad 
f_j(\Cp_j)\subset \RR_{\le 0}.
\]
Let $e_j:=P_j \Pp_j$ be the common edge of $C_j$ and $\Cp_j$, 
where we have 
\[
\Vertexes  (C_j)\cap \Vertexes  (\Cp_j)=\{P_j, \Pp_j\}.
\]
Let $\ellej(\RR)\in \Arr_j$ be the line defining the edge $P_j\Pp_j$.
Let $(\xi_j, \eta_j)$ be good affine coordinates of $\AtC$
 with $(x_j, y_j)=(\Real \xi_j, \Real \eta_j)$
regarded as affine coordinates of $\AtR$
such that $\ellej(\RR)$ is defined by $y_j=0$, 
and such that $\{P_j, \Pp_j\}$ is  $\{(\pm 1, 0)\}$.
See~Section~\ref{subsec:goodcoordinates}.
Note that, if $(\xi_j, \eta_j)$ satisfies these properties, then so does $(-\xi_j, -\eta_j)$.
Replacing  $(\xi_j, \eta_j)$ with $(-\xi_j, -\eta_j)$ if necessary,
we can assume that 
\[
C_j\subset \{y_j\ge 0\},
\quad
\Cp_j\subset \{y_j\le 0\}.
\]
Interchanging  $P_j$ and  $\Pp_j$ if necessary,
we can further assume that
\[
P_j=(-1, 0), \quad \Pp_j=(1, 0)
\]
in terms of $(x_j, y_j)$.
Let $\ellPj (\RR)$  (resp. $\ellPpj (\RR)$) be the line in $\Arr_j$ other than $\ellej(\RR)$ 
that passes through $P_j$ (resp. $\Pp_j$).
Then there exist real numbers $g_j, \gp_j$ 
such that
\[
\ellPj (\RR)\;\colon\; x_j=-1 +g_j y_j,
\quad 
\ellPpj (\RR)\;\colon\; x_j=1 +\gp_j  y_j.
\]
See Figure~\ref{fig:CjCpj}.
\begin{figure}
\begin{tikzpicture}[x=19mm, y=19mm]
\draw[thick](-1.4,0)--(1.4, 0);
\draw[thick](-.8,.9)--(-1.2,-.9);
\draw[thick](.8,.9)--(1.2,-.9);
\fill (-1,0) circle (2pt);
\fill (1,0) circle (2pt);
\node at (0, .8) [below] {$C_j$};
\node at (0,-.9) [above] {$\Cp_j$};
\node at  (1.25,-.9)[below] {$\ell_{\Pp j}(\RR)$};
\node at  (-1.2,-.9)[below] {$\ell_{P j}(\RR)$};
\node at  (1.4, 0)[right]  {$\ell_{e j}(\RR)$};
\node at  (-1.05, .2)[left]  {$P_j$};
\node at  (1.05, .2)[right]  {$\Pp_j$};
\end{tikzpicture}
\caption{$C_j$ and $\Cp_j$}\label{fig:CjCpj}
\end{figure}
\par
We prepare another copy of $\AtC$
with  good affine coordinates
$(\xi_0, \eta_0)$, and  
we regard $(x_0, y_0)=(\Real \xi_0, \Real \eta_0)$
as affine coordinates of $\AtR$.
We put
\[
P_0:=(-1, 0), \quad \Pp_0:=(1, 0), \quad e_0:=P_0 \Pp_0, 
\]
and define three lines $\ellez (\RR), \ellPz (\RR), \ellPpz (\RR)$ by 
\[
\ellez (\RR)\;\colon\; y_0=0,
\quad 
\ellPz (\RR)\;\colon\; x_0=-1,
\quad 
\ellPpz (\RR)\;\colon\; x_0=1.
\]
We then put
\[
\begin{split}
C_0&:=\set{(x_0, y_0)}{-1\le x_0\le 1, \; 0\le y_0},\\
\Cp_0&:=\set{(x_0, y_0)}{-1\le x_0\le 1, \; 0\ge y_0}.
\end{split}
\]
Finally, we put
\[
f_0:=y_0 (1-x_0) (1+x_0),
\]
which is a defining polynomial of the nodal real line arrangement $\Arr_0$ 
consisting of three lines $\ellez (\RR), \ellPz (\RR), \ellPpz (\RR)$.
Note  that we have 
\[
f_0(C_0)\subset \RR_{\ge 0}, \quad f_0(\Cp_0)\subset \RR_{\le 0}.
\]
\par
We define a quadratic map $\psi_j\colon \AtC\to \AtC$ by
\[
(\xi_0, \eta_0)\;\; \mapsto\;\;  (\xi_j, \eta_j)
=\left(\xi_0+\left(\frac{1-\xi_0}{2} g_j +\frac{1+\xi_0}{2} \gp_j\right)\eta_0, \; \eta_0\right),
\]
which maps $P_0$ to $P_j$ and $\Pp_0$ to $\Pp_j$.
Let $U_0$ be a sufficiently small open neighborhood of $e_0$ in $\AtC$.
Then there exists an open neighborhood $U_j$ of 
 $e_j$ in $\AtC$ 
 such that $\psi_j$ induces a biholomorphic isomorphism
 \[
 \psi_{j U}\colon U_0 \isom  U_j.
 \]
 The inverse  of $ \psi_{j U}$  is given by
 \[
 (\xi_j, \eta_j)\;\; \mapsto\;\;  (\xi_0, \eta_0)
 =\left(\frac{2 \xi_j -(g_j+\gp_j) \eta_j}{2-(g_j-\gp_j)\eta_j}, \; \eta_j\right).
 \]
We can easily check that $ \psi_{j U}$ maps 
$U_0\cap \AtR$ to $U_j\cap \AtR$ isomorphically and orientation-preservingly, 
and that it also maps isomorphically
\[
\renewcommand{\arraystretch}{1.2}
\begin{array}{lcl}
 U_0 \cap C_0 &\textrm{to} &  U_j \cap C_j, \\
 U_0 \cap \Cp_0 &\textrm{to} &  U_j \cap \Cp_j, \\
 U_0 \cap \ellez(\CC) &\textrm{to} &  U_j \cap \ellej(\CC), \\
  U_0 \cap \ellPz(\CC) &\textrm{to} &  U_j \cap \ellPj(\CC), \\
  U_0 \cap \ellPpz(\CC) &\textrm{to} &  U_j \cap \ellPpj(\CC).
 \end{array}
\]
We make $U_0$ so small that
$U_j$ does not intersect any element of the complexified line arrangement $\Arr_{j \CC}$ other than
$\ellej(\CC), \ellPj(\CC),  \ellPpj(\CC)$.
Then
\[
u_j:=f_0/ \psi_{j U}^* f_j
\]
is a holomorphic function on $U_0$ 
that does \emph{not} have any zero.
Moreover we have $u_j (Q)\in \RR_{>0}$ for any $Q\in C_0\cup \Cp_0$.
We can assume that $U_0$ is simply-connected.
Then we can define a holomorphic function 
$v_j$ on $U_0$ such that $v_j^2=u_j$ (in particular $v_j$ does not have any zero)
and that 
\begin{equation}\label{eq:vjQ}
v_j (Q)\in \RR_{>0}\;\;\textrm{for any}\;\;  Q\in C_0\cup \Cp_0.
\end{equation}
Let $W_0$  be  defined in $\CC\times \AtC$
by $\omega_0^2=f_0$, 
where $\omega_0$ is an affine  coordinate of $\CC$, 
and let $\pi_0\colon W_0\to \AtC$ be  the double covering
$(\omega_0, Q_0)\mapsto Q_0$.
We put
\[
\UW{0}:=\pi_0\inv (U_0),
\quad 
\UW{j}:=\pi_j\inv (U_j).
\]
We define an isomorphism
\[
\psi_{j W}\colon \UW{0} \isom \UW{j}
\]
that covers the isomorphism $\psi_{j U}\colon  U_0\isom  U_j$ by 
\begin{equation}\label{eq:psijW}
(\omega_0, Q_0)\mapsto (\omega_j, Q_j)=(\omega_0/v_j(Q_0), \psi_{j U} (Q_0)).
\end{equation}
We put 
\[
\psi_W:=\psi_{2 W} \circ \psi_{1 W}\inv \;\colon\; \UW{1}\isom \UW{2},
\]
which induces 
isomorphisms 
from  $\UW{1} \cap \pi_1\inv(C_1)$ to 
$\UW{2} \cap \pi_2\inv(C_2)$ 
and 
from  $\UW{1} \cap \pi_1\inv(\Cp_1)$ to 
$\UW{2} \cap \pi_2\inv(\Cp_2)$.
Recall from Definition~\ref{def:sigmaC} that the positive-sheet and the negative-sheet 
are defined by the sign of $\omega\in \RR$ or $\omega/\thei \in \RR$.
Therefore
it follows from~\eqref{eq:vjQ} and~\eqref{eq:psijW} that 
$\psi_W$  maps
the positive-sheet of $\UW{1}\cap  \pi_1\inv(C_1\spcirc)$
to   the positive-sheet of $\UW{2}\cap \pi_2\inv(C_2\spcirc)$,
and 
the positive-sheet of $\UW{1}\cap  \pi_1\inv(C\sp{\prime\circ}_1)$
to   the positive-sheet of $\UW{2}\cap \pi_2\inv(C\sp{\prime\circ}_2)$.
Hence the isomorphism
by $\psi_W$ from $\UW{1} \cap \pi_1\inv(C_1)$ to 
$\UW{2} \cap \pi_2\inv(C_2)$ is orientation-preserving,
where orientations are the standard orientations defined by $\sigmaA$ and $f_j$.
We put
\[
\UX{j}:=\rho_j\inv (\UW{j}).
\]
Since the minimal resolution of an ordinary node of an algebraic  surface is unique, 
the isomorphism $\psi_W$ induces an isomorphism 
\[
\psi_X \;\colon\; \UX{1}\isom \UX{2}
\]
that maps $D_{P_1}$ to  $D_{P_2}$ and $D_{\Pp_1}$ to  $D_{\Pp_2}$.
Since $\psi_X$ induces orientation-preserving isomorphisms
\[
U_{X,1}\cap \pants(C_1)\;\cong\; U_{X,2}\cap\pants(C_2),\quad 
U_{X,1}\cap\pants(\Cp_1)\;\cong\;U_{X,2}\cap\pants(\Cp_2),
\]
it follows that $\psi_X$ induces isomorphisms
of the capping hemispheres 
\[
\renewcommand{\arraystretch}{1.2}
\begin{array}{lllllll}
H_{C_1, P_1} &\isom  &  H_{C_2, P_2}, &  & H_{C_1, \Pp_1} &\isom  &  H_{C_2, \Pp_2},  \\
H_{\Cp_1, P_1} &\isom  &  H_{\Cp_2, P_2}, & & H_{\Cp_1, \Pp_1} &\isom  &  H_{\Cp_2, \Pp_2}.  \\
 \end{array}
\]
Therefore $\psi_X$ induces 
orientation-preserving isomorphisms~\eqref{eq:inducestdvans}.
\par
Note that $\stdvan(C_j)\cap \stdvan(\Cp_j)$ is contained in  $\UX{j}$.
The intersection number  of 
$\stdvan(C_1)$ and  $\stdvan(\Cp_1)$ 
is calculated by making a small displacement $\stdvan(\Cp_1)_{\varee}$ 
of $\stdvan(\Cp_1)$
that is stationary  outside $\UX{1}$
so that $\stdvan(C_1)$ and $\stdvan(\Cp_1)_{\varee}$
intersect transversely.
Transplanting  this procedure 
into $\UX{2}$ by $\psi_X$,
we see that $\stdvan(C_2)$ and  a  displacement $\stdvan(\Cp_2)_{\varee}$
of $\stdvan(\Cp_2)$
intersect in the same way as $\stdvan(C_1)$ and $\stdvan(\Cp_1)_{\varee}$.
Therefore we obtain~\eqref{eq:same}.
\end{proof}
\subsection{Proof of assertion (4) of Theorem~\ref{thm:intnumbs}}\label{subsec:changingsigns}
%
%
Interchanging $C$ and $\Cp$ if necessary,
we can assume that
\begin{equation}\label{eq:fCCp}
f(C)\subset \RR_{\ge 0},
\quad
f(\Cp)\subset \RR_{\le 0}.
\end{equation}
We construct morphisms~\eqref{eq:rhopi} with $f$ replaced by $-f$, 
and denote them  by 
\begin{equation}\label{eq:rhopim}
X\spm \maprightsp{\rho\spm } W\spm  \maprightsp{\pi\spm} \AtC,
\end{equation}
where $W\spm$ is defined in $\CC\times \AtC$ by
\[
\omega^2=-f
\]
with $\pi\spm (\omega, Q)=Q$,
and $\rho\spm\colon X\spm \to W\spm$ is the minimal desingularization.
For a bounded chamber $C\spprime$,
let $\stdvan\spm(C\spprime)$ denote the vanishing cycle over $C\spprime$ in $X\spm$ 
equipped with the standard orientation $\sigma\spm_{C\spprime}$ defined by $\sigmaA$ and $-f$.
Let 
\[
\intfnull\spm\colon H_2(X\spm; \ZZ)\times H_2(X\spm; \ZZ) \to \ZZ
\]
denote the intersection form on $X\spm$.
By Lemma~\ref{lem:same},
we have 
\begin{equation}\label{eq:sameminus}
\intf{[\stdvan(C)], [\stdvan(C\sprime)]}=
\intf{[\stdvan\spm(C)], [\stdvan\spm(C\sprime)]}\spm.
\end{equation}
Let $\Phi_W\colon W\isom  W\spm$ be the isomorphism defined by
\[
(\omega, Q)\mapsto (\thei \omega, Q).
\]
Note that we have 
\[
\omega\in \RR_{\ge 0} \Longrightarrow \thei \omega\in \thei\RR_{\ge 0},
\quad
\omega\in \thei \RR_{\ge 0} \Longrightarrow \thei \omega\in -\RR_{\ge 0}.
\]
Therefore, by~\eqref{eq:fCCp}, we see that  
$\Phi_W$ maps 
the  positive-sheet of $\pi\inv (C\spcirc)$ 
to the positive-sheet of $({\pi\spm})\inv (C\spcirc)$,
whereas it  maps 
the  positive-sheet of $\pi\inv (C\sp{\prime \circ})$ 
to the \emph{negative}-sheet of $({\pi\spm})\inv (C\sp{\prime \circ})$.
Therefore the isomorphism $\Phi_X\colon X\isom X\spm$
induced by $\Phi_W$ induces 
an 
isomorphism  
$\stdvan(C, \sigma_C)\cong \stdvan\spm(C, \sigma\spm_C)$, 
 whereas it induces 
 an isomorphism $\stdvan(C\sprime, \sigma_{C\sprime})\cong \stdvan\spm(C\sprime,-\sigma\spm_{C\sprime})$.
 Hence the homomorphism $\Phi_{X*}\colon H_2(X; \ZZ)\isom H_2(X\spm; \ZZ)$
 satisfies
%
\begin{align}
 \Phi_{X*}([\stdvan(C)])&=[\stdvan\spm(C)], \label{eq:PhiX1}\\
 \quad
 \Phi_{X*}([\stdvan(\Cp)])&=\sum_{P\spprime\in \Vertexes  (\Cp)}[D_{P\spprime}]-[\stdvan\spm(\Cp)],  \label{eq:PhiX2}
 \end{align}
%
 where~\eqref{eq:PhiX2} is derived from~\eqref{eq:orientationreversing}.
 Since $\Phi_{X*}$ preserves the intersection form,
 we  have
 \begin{equation}\label{eq:PhiXintnumbs}
 \intf{[\stdvan(C)], [\stdvan(\Cp)]}=
 \intf{\Phi_{X*}([\stdvan(C)]), \Phi_{X*}([\stdvan(\Cp)])}\spm.
  \end{equation}
Comparing~\eqref{eq:sameminus},~\eqref{eq:PhiX1},~\eqref{eq:PhiX2},~\eqref{eq:PhiXintnumbs},
we obtain
\[
2\, \intf{[\stdvan(C)], [\stdvan(\Cp)]}= 
\sum_{P\spprime\in \Vertexes  (\Cp)}\intf{[\stdvan\spm(C)], [D_{P\spprime}]}\spm=-2,
\]
where we use assertion (1) of Theorem~\ref{thm:intnumbs} 
and $|\Vertexes  (C)\cap \Vertexes  (\Cp)|=2$  in the second equality.
Thus Theorem~\ref{thm:intnumbs} (4) is proved.
\qed
\section{Proof of Theorem~\ref{thm:intnumbs} (5)}\label{sec:Proof5}
We  prove assertion (5) of Theorem~\ref{thm:intnumbs}
for the standard orientations $\sigma_C$  fixed in Section~\ref{subsec:StandardOrientations}.
\subsection{Target displacements}\label{subsec:targetProof5}
We construct an $E$-displacement 
\begin{equation}\label{eq:E}
\stdvan(C)_{\vareep}\;\;=\;\;\pants(C)_{\vareep} 
  \cup 
  \bigsqcup_{P\in \Vertexes (C)} (H_{C, P})_{\vareep}
 \end{equation}
of $\stdvan(C)$ in $X$ associated with sections 
$s_P\colon E_P\to \NNN_P$ for all $P\in \Vertexes (C)$,
and 
a $C$-displacement
\begin{equation}\label{eq:C}
\stdvan(C)_{\varee}\;\;=\;\;
\pants(C)_{\varee} \cup  \bigsqcup_{P\in \Vertexes (C)} H_{C, P} 
 \end{equation}
of $\stdvan(C)$ in $X$ associated with a continuous  function $\delta\colon C\to T(\AtR)$.
By the definition of $E$-displacements~(see~\eqref{eq:disjointafterE}),
for any $P\in \Vertexes  (C)$, 
we have  that $\pants(C)_{\vareep}$ and $H_{C, P}$ are disjoint,
and that $(H_{C, P})_{\vareep}$ and $H_{C, P}$ intersect only at one point, at which 
 the local intersection number is $-1$.
By choosing a sufficiently small $\vareep$,
we have that $(H_{C, P})_{\vareep}$ and $H_{C, \Pp}$ are disjoint for $P\ne \Pp$.
We  construct these displacements~\eqref{eq:E} and~\eqref{eq:C}  in such a way that
they have the following additional properties.
\begin{itemize}
\item[(EC1)]
The two spaces 
$\pants(C)_{\vareep} $ and $\pants(C)_{\varee} $  intersect only at two  points,
each  intersection point is in the interior of $\pants(C)_{\vareep} $ and of $\pants(C)_{\varee} $, 
and the local intersection number is $-1$ at each  intersection point.
\item[(EC2)]  For any $P\in \Vertexes  (C)$, we have that 
$(H_{C, P})_{\vareep}$ and $\pants(C)_{\varee} $ intersect only at one point,
that the intersection point is in the interior of $(H_{C, P})_{\vareep}$ and of $\pants(C)_{\varee} $, 
and that the local intersection number is $1$. 
\end{itemize}
Then we have
\[
\begin{split}
\intf{[\stdvan(C)], [\stdvan(C)]}&=\intf{[\stdvan(C)_{\vareep}], [\stdvan(C)_{\varee}]}=\\
&=\sum_{P\in \Vertexes (C)} (-1 )+ \sum_{P\in \Vertexes (C)}  1 +(-2)=-2,
\end{split}
\]
where the first sum comes from the intersections of  $(H_{C, P})_{\vareep}$ and $H_{C, P}$,
the second sum comes from 
the intersections of  $(H_{C, P})_{\vareep}$ and $\pants(C)_{\varee} $,
and the third  term $(-2)$ 
comes from the intersection of $\pants(C)_{\vareep} $ and $\pants(C)_{\varee} $.
Thus assertion (5) of Theorem~\ref{thm:intnumbs} will be proved.
\subsection{Construction of the \texorpdfstring{$E$}{E}-displacement}\label{subsec:proof5E}
For each $P\in \Vertexes  (C)$,
we fix good local coordinates $(\xi_P, \eta_P)$, $(x_P, y_P)$, $(\tilxi_P, \mu_P)$, and $(\tilx_P, m_P)$ 
on $\AtC$, $\AtR$, $\YC$,  and  $\YR$,  
 given in Section~\ref{subsec:goodsystem}.
 We can assume, without loss of generality, 
 that  $C$ is located as Case~(1) in~\eqref{eq:Ccases}
so that there exist real numbers  $a_P, b_P$ with $a_P<b_P$ such that 
 $ \betash C$ is give  by 
$\tilx_P\ge 0$ and  $ a_P\le m_P\le b_P$  
in the chart of $(\tilx_P, m_P)$ on $\YR$.
To make the computation easy,
we choose $(\xi_P, \eta_P)$
in such a way that  we have
\begin{equation}\label{eq:aPbPfixed}
a_P=-1, \quad b_P=1.
\end{equation}
See Remark~\ref{rem:arbitrarypair}.
We choose a  sufficiently small real number $c$,
and choose a section $s_P\colon E_P\to \NNN_{P}$ of
a tubular neighborhood $ \NNN_{P}\to E_P$   
such that, in a neighborhood of  $J_{C, P}$ in $E_P$,
the section $s_P$ is written as
\[
(0, \mu)\mapsto (c, \mu)
\]
in terms of $(\tilxi_P, \mu_P)$.
Then $s_P$ is admissible with respect to $C$.
See Example~\ref{example:constantsection}.
Now we construct the $E$-displacement~\eqref{eq:E}
of $\stdvan(C)$ in $X$
associated with all these sections $s_P$.
\par
We denote by $\varGammaPtriangle $ 
the image by $\beta$ of the region
\[
\set{(\tilx_P, m_P)}{0\le \tilx_P\le c\vareep, \; -1\le m_P\le 1}
\]
in $\betash C$.
In terms of the coordinates $(x_P, y_P)$ of $\AtR$, we have  
\begin{equation}\label{eq:varGamma}
\varGammaPtriangle =\set{(x_P, y_P)}{0\le x_P\le c\vareep, \; -x_P\le y_P\le x_P} \;\;\subset\;\; C.
\end{equation}
\subsection{Construction of the \texorpdfstring{$C$}{C}-displacement}\label{subsec:proof5C}
As in~Section~\ref{subsec:Cdislpcements},
we introduce an inner-product on the $\RR$-vector space $T(\AtR)$.
In particular, we have a distance and angles on $\AtR$.
\par
Let $n$ be the size of  $\Vertexes (C)$.
We index the vertexes of $C$ by elements $\nu$ of the cyclic group $\ZZ/n\ZZ$
as
\[
\Vertexes (C)=\set{P_{\nu}}{\nu\in \ZZ/n\ZZ}
\]
in such a way that,
for each $\nu\in \ZZ/n\ZZ$,
the line segment $P_{\nu} P_{\nu+1}$ is an edge of $C$,
and that the cyclic sequence $P_0, P_1, \dots, P_{n-1}, P_n=P_0$
of vertexes 
goes around $C$ in the positive-direction with respect to the fixed orientation
$\sigma_{\AA}$ of $\AtR$.
Let $\ell_{\nu}(\RR)\in \Arr$
be the line defining the edge $P_{\nu} P_{\nu+1}$,
and let $\overrightarrow{P_{\nu}  P_{\nu+1} }\in T(\AtR)$ be the translation such that
\[
P_{\nu} +\overrightarrow {P_{\nu}  P_{\nu+1} }=P_{\nu+1}.
\]
Obviously we have $\overrightarrow {P_{\nu}  P_{\nu+1} }\in \Tl{\ell_{\nu} (\RR)}$.
We define a unit vector $\tau_{\nu} \in \Tl{\ell_{\nu} (\RR)}$ by
\[
\tau_{\nu}:=\overrightarrow {P_{\nu}  P_{\nu+1} }\,/\,|P_{\nu}  P_{\nu+1}|.
\]
Let $o\in T(\AtR)$ be the zero vector.
Then $\tau_{\nu+1}$ is obtained 
by rotating $\tau_{\nu}$ in a positive-direction 
by the angle 
\[
\theta_{\nu}:=\angle \tau_{\nu} o \tau_{\nu+1}=\pi-\angle P_{\nu} P_{\nu+1} P_{\nu+2},
\]
where the positive-direction is 
with respect to the orientation of $T(\AtR)$ induced by 
$\sigma_{\AA}$.
We denote by $\Theta_{\nu}$ the triangle $\tau_{\nu} o \tau_{\nu+1}$  in $T(\AtR)$.
Then the interiors of these $n$ triangles are disjoint, and,
 since the sum of  $\theta_{0}, \dots, \theta_{n-1}$
is equal to $2\pi$,
their union 
\[
\Theta:=\bigcup_{\nu \in \ZZ/n\ZZ} \Theta_{\nu}
\]
contains $o$ in its interior.
The cyclic sequence $\tau_0, \dots, \tau_{n-1}, \tau_{n}=\tau_{0}$ 
of the vertexes of the $n$-gon $\Theta$ goes in a positive-direction around $\Theta$.
See Figure~\ref{fig:CTheta}.
\begin{figure}
\begin{tikzpicture}[x=1cm,y=1cm]
\coordinate (t0) at   (-3,1);
\coordinate (t1) at   (-1.25,-2);
\coordinate (t2) at   (1,-1.25);
\coordinate (t3) at   (1,0.33);
\coordinate (t4) at  ($-1*(t0)+-1*(t1)+-1*(t2)+-1*(t3)$);
\coordinate (p0) at (0,0);
\coordinate (p1) at ($(p0)+(t0)$);
\coordinate (p2) at ($(p1)+(t1)$);
\coordinate (p3) at ($(p2)+(t2)$);
\coordinate (p4) at ($(p3)+(t3)$);
\coordinate (m0) at ($1/2*(p0)+1/2*(p1)$);
\coordinate (m1) at ($1/2*(p1)+1/2*(p2)$);
\coordinate (m2) at ($1/2*(p2)+1/2*(p3)$);
\coordinate (m3) at ($1/2*(p3)+1/2*(p4)$);
\coordinate (m4) at ($1/2*(p4)+1/2*(p0)$);
\draw [-{Stealth[scale=1.5]}] (m4)--(p0)--(m0);
\draw [-{Stealth[scale=1.5]}](m0)--(p1)--(m1);
\draw [-{Stealth[scale=1.5]}] (m1)--(p2)--(m2);
\draw [-{Stealth[scale=1.5]}] (m2)--(p3)--(m3);
\draw [-{Stealth[scale=1.5]}] (m3)--(p4)--(m4);
\node at (p0) [right] {$P_0$};
\node at (p1) [above] {$P_1$};
\node at (p2) [left]  {$P_2$};
\node at (p3) [below] {$P_3$};
\node at (p4) [below]  {$P_4$};
\end{tikzpicture}
\qquad
\begin{tikzpicture}[x=2cm,y=2cm]
\coordinate (O) at (0,0);
\coordinate (t0) at   (-0.9486832980, 0.3162277660);
\coordinate (t1) at   (-0.5299989400, -0.8479983040);
\coordinate (t2) at   (0.6246950477, -0.7808688096);
\coordinate (t3) at  (0.9486832984, 0.3162277661);
\coordinate (t4) at  (0.7612432304, 0.6484664555);
\draw (t0)--(t1)--(t2)--(t3)--(t4)--(t0);
\draw [-{Stealth[scale=1.5]}] (O)--(t0);
\draw [-{Stealth[scale=1.5]}] (O)--(t1);
\draw [-{Stealth[scale=1.5]}] (O)--(t2);
\draw[-{Stealth[scale=1.5]}]  (O)--(t3);
\draw [-{Stealth[scale=1.5]}] (O)--(t4);
\node at (t0) [left] {$\tau_0$};
\node at (t1) [below] {$\tau_1$};
\node at (t2) [below]  {$\tau_2$};
\node at (t3) [right] {$\tau_3$};
\node at (t4) [right]  {$\tau_4$};
\draw (O) [very thin] circle (1);
\end{tikzpicture}
\caption{ $C$ and  $\Theta$ }\label{fig:CTheta}
\end{figure}
\par
We define a continuous function 
\[
\delta\colon C\to \Theta \subset T(\AtR)
\]
as follows.
Let  $M_{\nu}$ be the mid-point of the edge $P_{\nu} P_{\nu+1}$,
and let $M$ be the $n$-gon 
$M_0M_1 \dots M_{n-1}M_0$.
Then we have a  homeomorphism
\[
\delta_M\colon M\isom  \Theta
\]
with the following properties.
\begin{itemize}
\item For each $\nu\in \ZZ/n\ZZ$,
we have $\delta_M(M_{\nu})=\tau_{\nu}$.
Moreover,
the restriction of $\delta_M$ to $M_{\nu} M_{\nu+1}$
is an affine-linear isomorphism of line segments from 
$M_{\nu} M_{\nu+1}$ to
$\tau_{\nu} \tau_{\nu+1}$.
In particular, $\delta_M$ is orientation-preserving.
\item The inverse-image $\delta_M\inv (o)$ consists of a single point $Q_0$
in the interior of $M$,
$\delta_M$ is differentiable at the point $Q_0$,
and $Q_0$ is not a critical point of $\delta_M$.
\end{itemize}
For each $\nu\in \ZZ/n\ZZ$,
there exists a unique affine-linear isomorphism
\[
\delta_{\nu}\colon \triangle M_{\nu} P_{\nu+1} M_{\nu+1} \isom \Theta_{\nu}=\triangle \tau_{\nu} o\tau_{\nu+1}
\]
that maps $M_{\nu}$ to $\tau_{\nu}$, $P_{\nu+1}$ to $o$, and $M_{\nu+1}$ to $\tau_{\nu+1}$,
respectively.
In particular,  the map $\delta_{\nu}$ is orientation-\emph{reversing}.
Then the restrictions to the line segment $M_{\nu} M_{\nu+1}$ of $\delta_M$ 
and  of $\delta_{\nu}$  coincide.
Thus, gluing $\delta_M$ and $\delta_{\nu}$ 
along $M_{\nu} M_{\nu+1}$ for $\nu \in \ZZ/n\ZZ$, 
we obtain
a continuous map 
\[
\delta\sprime\colon C\to \Theta \subset T(\AtR).
\]
Note that  $\delta\sprime$ maps the edge $P_{\nu} P_{\nu+1}$
to the line-segment $o\tau_{\nu}\subset \Tl{\ell_{\nu}(\RR)}$.
Hence $\delta\sprime$ satisfies the $e$-condition for every edge of $C$.
\par
We assume that we have chosen $c\in \RR_{>0}$ and $\vareep$ 
in the construction of the $E$-displacement
so small that,
for each $P_{\nu+1}\in \Vertexes (C)$,
 the triangle $\varGamma_{P_{\nu+1}}$  defined by~\eqref{eq:varGamma} 
 is contained in  
$\triangle M_{\nu} P_{\nu+1} M_{\nu+1}$
and that the edge
\[
\gamma_{\nu+1}:=C\cap \{x_{P_{\nu+1}}=c\vareep\}
\]
of 
$\varGamma_{P_{\nu+1}}$
is disjoint from the edge  $M_{\nu}M_{\nu+1}$ of $\triangle M_{\nu} P_{\nu+1} M_{\nu+1}$.
 We choose a sufficiently small positive real number $\rho$
 so that, for each  $P_{\nu+1}\in \Vertexes (C)$, the closed region
 $U_{P_{\nu+1}, \rho}$ defined by~\eqref{eq:UPrho}
 is contained in 
$\varGamma_{P_{\nu+1}}$
 and 
 disjoint from the edge $\gamma_{\nu+1}\subset \varGamma_{P_{\nu+1}}$.
 (See Figure~\ref{fig:atPnu}.)
 We execute the $q_{\rho}$-modification to $\delta\sprime$
at each $P_{\nu+1}\in \Vertexes (C)$,
and obtain a continuous function 
\[
\delta\colon C\to \Theta \subset T(\AtR)
\]
that satisfies the $e$-condition for every edge of $C$ and 
regular at each vertex of $C$.
See Lemma~\ref{lem:qmodification}.
\begin{figure}
\begin{tikzpicture}[x=12mm, y=9mm]
\draw[thick](-1,-1)--(2.5,2.5);
\draw[thick](-1,1)--(2.8,-2.8);
\draw[thick](2,2)--(2.5,-2.5);
\draw[thick](1.3,1.3)--(1.3,-1.3);
\node at (-.2,0) [left] {$P_{\nu+1}$};
\node at (1.3,0) [right] {$\gamma_{\nu+1}$};
\filldraw [fill=gray, opacity=1] (0.7,-0.7)  arc (-45:45:0.989943) --(0,0)--cycle;
\filldraw [fill=gray, opacity=.3] (1.3,-1.3)--(1.3,1.3)--(0,0)--cycle;
\fill (0,0) circle (2pt);
\fill (2.5,-2.5) circle (2pt);
\fill (2,2) circle (2pt);
\node at (2-.2, 2+.1) [above] {$M_{\nu}$};
\node at (2.2,-2.5-.1) [below] {$M_{\nu+1}$};
\end{tikzpicture}
\caption{$\triangle M_{\nu} P_{\nu+1} M_{\nu+1}$, $\varGamma_{P_{\nu+1}}$,  and $U_{P_{\nu+1}, \rho}$}\label{fig:atPnu}
\end{figure}
Note  that,
since this  $q_{\rho}$-modification 
does not affect the restriction 
of $\delta\sprime$ to a small neighborhood of the line segment 
$\gamma_{\nu+1}$ in $ C$, 
the function $\delta$ restricted to a small neighborhood of $\gamma_{\nu+1}$
in $C$ is equal to the unique affine-linear map  
that maps  $M_{\nu}$,  $P_{\nu+1}$, $  M_{\nu+1}$ 
to  $\tau_{\nu}$, $o$, $\tau_{\nu+1}$,  respectively.
\subsection{Intersection of \texorpdfstring{$\pants(C)_{\vareep}$}{DeltaCeprime} and \texorpdfstring{$\pants(C)_{\varee}$}{DeltaCe}}\label{subsec:proof5intnumb1}
We prove that the $E$-displacement and the $C$-displacement thus constructed satisfy 
condition (EC1).
Recall that 
\[
C_{\varee}=\set{Q+\thei \delta(Q) \varee}{Q\in C}.
\]
Hence we have
\[
C\cap C_{\varee}:=\set{Q\in C}{\delta(Q)=0}=\{Q_0\}\;\sqcup\; \Vertexes  (C),
\]
where $\{Q_0\}=\delta_M\inv (o)$.
Let $Q_0\sprime\in \YR$ be the point such that $\beta(Q_0\sprime)=Q_0$.
We have 
\[
\betash C\cap (\betash C)_{\varee}=\{Q_0\sprime \}\;\;\sqcup\;\;  \bigsqcup_{P\in \Vertexes  (C)} J_{C, P}.
\]
Since $(\betash C)_{\vareep}\subset \betash C$ and $(\betash C)_{\vareep}\cap J_{C, P}=\emptyset$,
we obtain 
\[
(\betash C)_{\vareep} \cap (\betash C)_{\varee}=\{Q_0\sprime \}.
\]
Recall that $\pants(C)_{\vareep}=\phi\inv((\betash C)_{\vareep})$
and $\pants(C)_{\varee}=\phi\inv((\betash C)_{\varee})$,
each equipped with the standard orientation $\sigma_C$.
We put
\[
\phi_{\Delta, \vareep}:=\phi|\pants(C)_{\vareep}\colon \pants(C)_{\vareep} \to (\betash C)_{\vareep},
\;\;
\phi_{\Delta, \varee}:=\phi|\pants(C)_{\varee}\colon \pants(C)_{\varee} \to (\betash C)_{\varee}.
\]
Let $Q\sprime_{0, +}$ and $Q\sprime_{0, -}$ 
be the points of $X$ such that $\phi\inv(Q_0\sprime)=\{Q\sprime_{0, +}, Q\sprime_{0, -}\}$.
We then have 
\[
\pants(C)_{\vareep}\cap \pants(C)_{\varee}=\{Q\sprime_{0, +}, Q\sprime_{0, -}\}.
\]
Interchanging $Q\sprime_{0, +}$ and $Q\sprime_{0, -}$ if necessary,
we can assume that  the restrictions  $\phi_{\Delta, \vareep}$ and $\phi_{\Delta, \varee}$
of $\phi$ to $\pants(C)_{\vareep}$ and to $\pants(C)_{\varee}$ 
are both orientation-preserving at $Q\sprime_{0, +}$,
and that  they are both orientation-reversing at $Q\sprime_{0, -}$.
In particular,
the local intersection number 
$(\pants(C)_{\vareep}, \pants(C)_{\varee})_{Q\sprime_{0, +}}$ of $\pants(C)_{\vareep}$ and $\pants(C)_{\varee}$
at $Q\sprime_{0, +}$
is equal to their local intersection number $(\pants(C)_{\vareep}, \pants(C)_{\varee})_{Q\sprime_{0, -}}$
 at $Q\sprime_{0, -}$.
Since $\beta\circ \phi$ is a local isomorphism locally around 
each of the points $Q\sprime_{0, \pm}$ of $X$, 
we see that
\[
(\pants(C)_{\vareep}, \pants(C)_{\varee})_{Q\sprime_{0, +}}=
(\pants(C)_{\vareep}, \pants(C)_{\varee})_{Q\sprime_{0, -}}=
(C, C_{\varee})_{Q_0},
\]
where $(C, C_{\varee})_{Q_0}$ is the local intersection number  of $C$ and $C_{\varee}$ at $Q_0$.
We show that 
$(C, C_{\varee})_{Q_0}=-1$.
\par 
We choose a  coordinate system $(\xi, \eta)$ of $\AtC$ that is good at $Q_0$.
(See Section~\ref{subsec:goodsystem}). 
We then put
\[
(x, y)=(\Real \xi, \Real \eta),
\quad 
(u, v)=(\Imag \xi, \Imag \eta).
\]
Then an ordered basis of the real tangent space at $Q_0$
of $\AtC$ oriented by the complex structure is 
\begin{equation}\label{eq:Derxuyv}
\Der{x}, \;\; \Der{u}, \;\; \Der{y},\;\; \Der{v}.
\end{equation}
We write  tangent vectors  at $Q_0$
in terms of this basis.
The tangent space at $Q_0$ of $C$ oriented by $\sigmaA$
has the ordered basis
\begin{equation*}\label{eq:a1a2}
\vect{a}_1:=(1,0,0,0), \;\; \vect{a}_2:=(0,0,1,0).
\end{equation*}
Recall that $C_{\varee}$ is given by the equations 
\[
u=\varee \delta_x(x, y), \quad v=\varee \delta_y(x, y), 
\]
where $\delta_x$ and $\delta_y$ are functions defined by~\eqref{eq:deltaxdeltay},
and that $C_{\varee}$ is 
oriented by $\sigmaA$
via the homeomorphism $\prR|C_{\varee}\colon C_{\varee}\cong C$.
Therefore 
the tangent space at $Q_0$ of $C_{\varee}$ with this orientation 
has the ordered basis
\begin{equation*}\label{eq:a3a4}
\vect{a}_3:=
\left(1,\,\varee\fDDer{\delta_x}{x} \,(Q_0),\, 0, \,\varee\fDDer{\delta_y}{x}\,  (Q_0)\right), \;
\vect{a}_4:=
\left(0,\,\varee\fDDer{\delta_x}{y} \,(Q_0),\, 1, \,\varee\fDDer{\delta_y}{y}\,  (Q_0)\right).
\end{equation*}
To prove $(C, C_{\varee})_{Q_0}=-1$,
it is enough to show $\det A<0$,
where  $A$ is the $4\times 4$ matrix 
whose $i$th row vector is $\vect{a}_i$ defined above for $i=1, \dots, 4$.
(See~\cite[Chapter~3]{GuilleminPollack1974}.)
Since $\delta$ is orientation-preserving at $Q_0$, its Jacobian  at $Q_0$ is $>0$:
\[
\fDDer{(\delta_x, \delta_y)}{(x, y)}\,  (Q_0) \;\;>\;\;0. 
\]
Therefore we have
\[
\det A=-\varee^2 \fDDer{(\delta_x, \delta_y)}{(x, y)}\,  (Q_0)<0.
\]
\subsection{Intersection of \texorpdfstring{$(H_{C, P})_{\vareep}$}{HCPeprime} and \texorpdfstring{$\pants(C)_{\varee} $}{DeltaCe}}
\label{subsec:proof5intnumb2}
Let $P$ be a vertex of $C$.
We investigate  the intersection of $(H_{C, P})_{\vareep}$ and $\pants(C)_{\varee} $,
and show that condition (EC2) is satisfied.
Recall that we have fixed good  local coordinate systems 
$(\xi_P, \eta_P)$, $(x_P, y_P)$, $(\tilxi_P, \mu_P)$,  $(\tilx_P, m_P)$
at $P$ (see Definition~\ref{def:goodsystems})
such that $a_P=-1$ and $ b_P=1$ hold (see~\eqref{eq:aPbPfixed}), 
and that the location of $C$ is Case~(1) of~\eqref{eq:Ccases}.
For simplicity,
we omit the subscript $P$ from these coordinates.
Let $\nu+1\in \ZZ/n\ZZ$ be the index of $P$.
By~\eqref{eq:aPbPfixed},
there exist positive real numbers 
$r, r\sprime, r\spprime, r\sppprime$ such that
\[
M_{\nu}=r(1,1), \;\;
M_{\nu+1}=r\sprime (1, -1),
\;\;
\tau_{\nu} =-r\spprime(e_x+e_y),
\;\;
\tau_{\nu+1} =r\sppprime(e_x-e_y).
\]
in terms of $(x, y)$, 
where  $e_x, e_y$ is the basis of $T(\AtR)$ given by~\eqref{eq:exey}.
Recall that,
in a small neighborhood $U_{\gamma}$ of 
the line-segment $\gamma_{\nu+1}=\{x=c\vareep\}\cap C$
in $C$, the  function $\delta$ is equal to the affine-linear map such that
$\delta(P)=o$, $\delta(M_{\nu})=\tau_{\nu}$, $\delta(M_{\nu+1})=\tau_{\nu+1}$.
Hence there exist real numbers $g, h$ 
satisfying
\begin{equation}\label{eq:gh}
-h<g<h
\end{equation}
such that, on $U_{\gamma}$,  we have
\begin{equation}\label{eq:deltagh}
\delta_x(x, y)=g x-h y,
\quad
\delta_y(x, y)=-h x+g y,
\end{equation}
where $\delta_x$ and $\delta_y$ are defined by~\eqref{eq:deltaxdeltay}.
Then,
by the formula~\eqref{eq:dspbcirc2},
the $C$-displacement $d\spsh_{\varee}\colon \betash C\to  (\betash C)_{\varee}$
associated with $\delta$ 
 is given
in a small neighborhood $\beta\inv(U_{\gamma})$ of $\beta\inv (\gamma_{\nu+1})$ in $\betash C$ by
\[
\begin{split}
&(\tilx, m)
\;\mapsto\;  \\
&\phantom{aa}(\tilxi, \mu)\;=\;\left(\,\tilx (1+\thei \varee (g-hm)),\;
\frac{m+\thei \varee (-h+g m)}{1+\thei \varee (g-hm)}\,\right)\;\in\;(\betash C)_{\varee}.
\end{split}
\]
Recall that the $E$-displacement
$(E_P)_{\vareep}$ is given by $\tilxi=c\vareep$
in a small neighborhood of $\beta\inv (\gamma_{\nu+1})$ in $\YC$.
Since $c\vareep\in \RR_{>0}$,
it follows that $(E_P)_{\vareep}$ and  $(\betash C)_{\varee}$ intersect only at one point,
which we write as
\[
R=d\spsh_{\varee}(R_0), 
\]
where $R_0$ is the point
\[
(\tilx, m)=(c\vareep, g/h).
\]
The coordinates of the point $R$  in terms of $(\tilxi, \mu)$ is 
\[
(\tilxi_R, \mu_R)=\left(c\vareep, \;\;\frac{g}{h}+\thei\varee\left(-h+\frac{g^2}{h}\right)\;\right).
\]
By~\eqref{eq:gh} and $\varee$ being very small,
the value $\mu_R$ of $\mu$ at $R$  
satisfies
\begin{equation}\label{eq:muR}
-1<\Real \mu_R<1,
\quad
\Imag \mu_R<0,
\quad
|\Imag \mu_R| \ll 1.
\end{equation}
More precisely,
we can make $|\Imag \mu_R| $ arbitrarily small by choosing a sufficiently small $\varee$.
Then $(D_P)_{\vareep}=\phi\inv ((E_P)_{\vareep})$
and $\pants(C)_{\varee}$ intersect at two points in $\phi\inv(R)$.
Let $\zeta$ be the function on $X$ defined by~\eqref{eq:zetasq}.
In the current situation,
we have
\[
\zeta^2=\frac{\mu+1}{\mu-1}.
\] 
We calculate the value of $\zeta$ at the points of $\phi\inv(R)$.
Note that, by~\eqref{eq:muR}, 
making $\varee$ smaller if necessary, 
we can assume that 
\[
\Real \frac{\mu_R+1}{\mu_R-1}<0, 
\quad 0<\Imag \frac{\mu_R+1}{\mu_R-1}\ll1.
\]
Hence 
one solution $\zeta_R$ of 
the equation  $\zeta^2=(\mu_R+1)/(\mu_R-1)$
satisfies
\begin{equation}\label{eq:zetaR}
0<\Real \zeta_R\ll1 ,  \quad 0<\Imag \zeta_R,
\end{equation}
and the other solution is $-\zeta_R$.
Let $\tilR_+$ and $\tilR_-$
be the points of $\phi\inv(R)$ such that   $\zeta(\tilR_+)=\zeta_R$
and $\zeta(\tilR_-)=-\zeta_R$.
\par
Recall from Table~\ref{table:hemispheres} that,
because we are in Case (1) of~\eqref{eq:Ccases},  the circle $S_{C, P}$ is given by $\Real\zeta=0$ on 
$D_P=\{\phi^*\tilxi=0\}$.
Hence one and only one of $\tilR_+$ or $ \tilR_-$ belongs to $(H_{C, P})_{\vareep}$.
In particular,
the set-theoretic intersection of $(H_{C, P})_{\vareep}$ and $(\pants(C))_{\varee}$ consists of a single point.
We will show that the local intersection number at this point is $1$.
\par
Suppose that $\tilR_+\in (H_{C, P})_{\vareep}$.
By~\eqref{eq:zetaR}, the hemisphere  $H_{C, P}$ is given by $\Real\zeta|D_P\ge 0$,
that is, we are in the second row of Table~\ref{table:functions}.
The orientation $\ori_H$ on $S_{C, P}=\bdr H_{C, P} $ given by the complex structure of $H_{C, P}$
is $\downarrow$.  
Hence the orientation $\ori_C$ on $S_{C, P}=\bdr \pants(C) $ given by 
the standard orientation on $\pants(C) $ is $\uparrow$.
Since
$\phi$ is given by 
\[
\mu=\frac{\zeta^2+1}{\zeta^2-1},
\]
and since $\Image \zeta_R>0$ by~\eqref{eq:zetaR},
the image by $\phi$ of the orientation $\ori_C$ of $S_{C, P}=\bdr \pants(C) $
near $\tilR_+$ is the $m$-increasing direction on $J_{C, P}$.
Indeed, when $\zeta$ moves from $0$ to $\infty \thei $ along $\thei \RR_{\ge 0}$,
then $\mu$ moves from $-1$ to $1$ along $\RR$.
Recall that the standard orientation of $\betash C$ induces 
the $m$-decreasing orientation on $J_{C, P}$.
(See Corollary~\ref{cor:downward} and  Figure~\ref{fig:betashC}.)
Therefore $\tilR_+$ is on the \emph{negative}-sheet of $(\pants(C))_{\varee}$.
\par 
Suppose that $\tilR_-\in (H_{C, P})_{\vareep}$.
By~\eqref{eq:zetaR}, we see that $H_{C, P}$ is given by $\Real\zeta\le 0$, 
and hence we are in the first row of Table~\ref{table:functions}.
The orientation $\ori_H$ on $S_{C, P}=\bdr H_{C, P} $  
is $\uparrow$.  
Hence the orientation $\ori_C$ on $S_{C, P}=\bdr \pants(C) $ 
is  $\downarrow$.
Since
 $-\Image \zeta_R<0$ by~\eqref{eq:zetaR},
the image by $\phi$ of the orientation $\ori_C$ of $S_{C, P}=\bdr \pants(C) $
near $\tilR_-$ is the $m$-increasing orientation on $J_{C, P}$.
Therefore $\tilR_-$ is also on the \emph{negative}-sheet of $(\pants(C))_{\varee}$.
\par
In any case,
the restriction of  $\beta\circ \phi$ to a neighborhood in $\pants(C)_{\varee}$ of
the intersection point $\tilR_+$ or $\tilR_-$ of $(H_{C, P})_{\vareep}$ and  $\pants(C)_{\varee}$
is an orientation-\emph{reversing} local  isomorphism to 
an open neighborhood of $\beta(R)$ in $C_{\varee}$.
Hence, to show that the local intersection number of
$(H_{C, P})_{\vareep}$ and $(\pants(C))_{\varee}$
 is $1$,
it is enough to prove that,
in $\AtC$,
the local intersection number at $\beta(R)$ of
\[
\beta((E_P)_{\vareep})=\{\xi=c\vareep\}
\]
with the orientation coming from the complex structure and the space $C_{\varee}$
with the orientation induced by $\sigmaA$ is $-1$.
Note that
$C_{\varee}$ is 
given locally around $\beta(R)$ by 
\begin{align*}
\xi&=x+\thei \varee \delta_x (x, y)=x+\thei \varee (gx-hy),\\
\eta&=y+\thei \varee \delta_y (x, y)=y+\thei \varee (-hx+gy).
\end{align*}
We use the ordered basis~\eqref{eq:Derxuyv} of the real tangent space of $\AtC$.
Then the   tangent space of $\beta((E_P)_{\vareep})$ 
oriented by the complex structure
has an ordered basis
\[
\vect{b}_1:=(0,0,1,0),
\quad
\vect{b}_2:=(0,0,0,1).
\]
On the other hand,   the oriented  tangent space of $C_{\varee}$ has an ordered basis 
\begin{eqnarray*}
\vect{b}_3&:=&\left(1, \varee\fDDer{\delta_x}{x}, 0,  \varee\fDDer{\delta_y}{x}\right)=(1,  \varee g, 0,  -\varee h ),\\
\vect{b}_4&:=&(0, \varee\fDDer{\delta_x}{y},1, \varee\fDDer{\delta_y}{y})=(0, - \varee h, 1,  \varee g ).
\end{eqnarray*}
Let   $B$ be the $4\times 4$ matrix 
whose $i$th row vector is $\vect{b}_i$ defined above for $i=1, \dots, 4$.
Since $h>0$ by~\eqref{eq:gh}, 
we have $\det B=-\varee  h<0$.
Therefore the local intersection number at $\beta(R)$ of
$\beta((E_P)_{\vareep})$ and $C_{\varee}$  is $-1$.
(See~\cite[Chapter 3]{GuilleminPollack1974}.)
Thus  (EC2) holds at $P$.
 \qed
\section{Examples}\label{sec:examples}
Using a smooth projective completion  $\tilX$ of $X$, 
we  check certain consequences of Theorem~\ref{thm:intnumbs}  in several  examples.
%
\subsection{Lattice}\label{subsec:lattice}
A \emph{lattice} is a free $\ZZ$-module $L$ of finite rank
with a \emph{non-degenerate} symmetric bilinear form $\intfnull\colon L\times L\to \ZZ$.
Let $L$ be a lattice.
The \emph{signature} of $L$ is the pair $(s_+, s_-)$
of numbers  of positive and negative eigenvalues of the Gram matrix of $L$.
A lattice $L$ is embedded naturally into its dual $L\dual:=\Hom(L, \ZZ)$.
The \emph{discriminant group} of $L$ is the finite abelian group 
\[
\discg (L):=L\dual/L.
\]
We say that $L$ is \emph{unimodular} if its discriminant group is trivial.
By definition, we have the following:
\begin{proposition}\label{prop:subquotient}
Let $L\sprime$ be a sublattice of a lattice $L$ with a finite index $m$.
Then $\disc(L)$ is a sub-quotient of $\disc(L\sprime)$, and 
$|\disc(L)|$ is equal to $|\disc(L\sprime)|/m^2$.
\qed
\end{proposition}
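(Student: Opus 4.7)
The plan is to exhibit and exploit the chain of inclusions
\[
L\sprime \;\subset\; L \;\subset\; L\dual \;\subset\; (L\sprime)\dual.
\]
Non-degeneracy of the form on $L$ gives the embedding $L \inj L\dual$ via $x \mapsto \intf{x, \cdot}$. Since $L\sprime$ has finite index in $L$, we have $L\sprime \tensor \QQ = L \tensor \QQ$; hence the restriction map $L\dual \to (L\sprime)\dual$ is injective, because a $\ZZ$-linear functional on $L$ is determined by its values on $L\sprime$.

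For the order statement, I would apply multiplicativity of indices along this chain:
\[
|\disc(L\sprime)| \;=\; [(L\sprime)\dual : L\sprime] \;=\; [(L\sprime)\dual : L\dual] \cdot [L\dual : L] \cdot [L : L\sprime].
\]
The middle factor equals $|\disc(L)|$ by definition, and the rightmost factor equals $m$. It remains to identify $[(L\sprime)\dual : L\dual]$ with $m$ as well. Using Smith normal form, one chooses a basis $e_1, \dots, e_n$ of $L$ such that $d_1 e_1, \dots, d_n e_n$ is a basis of $L\sprime$, with $d_i \in \ZZ_{>0}$ and $\prod d_i = m$. Writing $e_i^*$ for the dual basis of $L\dual$ and $f_i^*$ for the dual basis of $(L\sprime)\dual$, a direct computation gives $e_i^* = d_i f_i^*$ inside $(L\sprime)\dual$, so $(L\sprime)\dual / L\dual \cong \bigoplus_i \ZZ/d_i\ZZ$ has order $m$. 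Substituting yields $|\disc(L\sprime)| = m^2\, |\disc(L)|$, which is the asserted identity.

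For the sub-quotient statement, the same chain of inclusions produces the short exact sequence
\[
0 \;\to\; L/L\sprime \;\to\; L\dual/L\sprime \;\to\; L\dual/L \;\to\; 0.
\]
The right-hand term is $\disc(L)$, whereas $L\dual/L\sprime$ sits as a subgroup of $(L\sprime)\dual/L\sprime = \disc(L\sprime)$. This exhibits $\disc(L)$ as a quotient of a subgroup of $\disc(L\sprime)$, which is precisely what sub-quotient means.

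This is essentially a formal bookkeeping argument and I anticipate no genuine obstacle. The only points needing mild care are the two injectivity assertions that set up the chain (non-degeneracy for $L \inj L\dual$ and finite index for $L\dual \inj (L\sprime)\dual$) together with the Smith normal form computation of the outer index $[(L\sprime)\dual : L\dual]$; once the chain is in place, both conclusions follow formally from multiplicativity of indices and the first isomorphism theorem.
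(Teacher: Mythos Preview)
Your argument is correct. The paper itself does not give a proof at all: it introduces the proposition with ``By definition, we have the following'' and closes immediately with \qed, treating the result as a standard fact about lattices. Your write-up supplies exactly the routine details one would expect --- the chain $L\sprime\subset L\subset L\dual\subset (L\sprime)\dual$, the Smith-normal-form computation of the outer index, and the short exact sequence exhibiting $\disc(L)$ as a quotient of the subgroup $L\dual/L\sprime$ of $\disc(L\sprime)$ --- so there is nothing to compare against and nothing to fix.
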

The following is well known. 
See, for example,~\cite{theNikulin}.
\begin{proposition}\label{prop:discg}
Let $L$ be a sublattice of a unimodular lattice $M$.
Suppose that $M/L$ is torsion-free.
We put
\begin{equation}\label{eq:Lsperp}
L\sperp:=\set{x\in M}{\intf{x, y}=0\;\;\textrm{for all}\;\; y\in L}.
\end{equation}
Then we have $\discg (L) \cong \discg (L\sperp)$.
\qed
\end{proposition}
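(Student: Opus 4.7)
The plan is to identify $\discg(L)$ with a quotient of $M$ that is visibly symmetric in $L$ and $L\sperp$, and then run the argument twice. First I would consider the restriction-of-pairing homomorphism
\[
\rho\colon M\to L\dual,\qquad x\mapsto \bigl(y\mapsto \intf{x,y}\bigr),
\]
whose kernel is exactly $L\sperp$ by~\eqref{eq:Lsperp}. The key claim will be that $\rho$ is surjective: unimodularity of $M$ identifies $M$ with $M\dual$ via the pairing, while the hypothesis that $M/L$ is torsion-free makes $M/L$ a finitely generated free $\ZZ$-module, so the inclusion $L\hookrightarrow M$ splits as abelian groups, giving surjectivity of the restriction $M\dual\to L\dual$. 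Since the composite $L\hookrightarrow M\xrightarrow{\rho}L\dual$ is the canonical embedding induced by the form on $L$, passing to quotients yields an isomorphism
\[
\overline{\rho}\colon M/(L+L\sperp)\;\isom\;L\dual/L\;=\;\discg(L).
\]

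Next I would run the same construction with $L$ and $L\sperp$ exchanged. This requires verifying that $(L\sperp)\sperp=L$ and that $M/L\sperp$ is torsion-free. The second fact is immediate from the previous paragraph, which gives $M/L\sperp\cong L\dual$, a free $\ZZ$-module. For the first, the form on $L\otimes\QQ$ is non-degenerate, so $L$ and $(L\sperp)\sperp$ have the same $\QQ$-rank; hence $(L\sperp)\sperp/L$ is torsion, and as a subgroup of the torsion-free group $M/L$ it must be trivial. The previous construction applied to $L\sperp$ then produces an isomorphism $M/(L\sperp+L)\;\isom\;\discg(L\sperp)$, and combining this with $\overline{\rho}$ gives the desired $\discg(L)\cong\discg(L\sperp)$.

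The argument is essentially formal once the bookkeeping is in place; the only step that uses both hypotheses simultaneously is the surjectivity of $\rho$, and the only subtlety is the identification $(L\sperp)\sperp=L$ needed to make the symmetric argument apply. I expect these two points to be where the care is required.
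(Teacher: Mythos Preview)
Your argument is correct. The paper does not actually prove this proposition: it records it as well known and defers to Nikulin~\cite{theNikulin}, with the \qed placed immediately after the statement. So there is no ``paper's own proof'' to compare against, and what you have written is a complete, self-contained verification of the cited fact.

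A couple of minor remarks on exposition. In the step establishing $(L\sperp)\sperp=L$, you invoke non-degeneracy of the form on $L\otimes\QQ$; this is legitimate (it is part of what ``sublattice'' means here, since $\discg(L)$ would otherwise be undefined), but in fact the equality of $\QQ$-ranks already follows from non-degeneracy of $M\otimes\QQ$ alone, via $\dim V\sperp=\dim M-\dim V$ for any subspace. Either justification is fine. You might also note in passing that the form on $L\sperp$ is itself non-degenerate (needed for $\discg(L\sperp)$ to make sense); this follows because over $\QQ$ one has $M=L\oplus L\sperp$, so any element of $L\sperp$ orthogonal to all of $L\sperp$ is orthogonal to all of $M$ and hence zero. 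These are small clarifications; the structure of your proof---identifying both discriminant groups with the common quotient $M/(L+L\sperp)$---is the standard one and is exactly what one would expect to find behind the citation.
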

Combining Propositions~\ref{prop:subquotient}~and~\ref{prop:discg}, we obtain the following:
\begin{corollary}\label{cor:subquot}
Let $L$ be a sublattice of a unimodular lattice $M$.
Then $\discg (L\sperp)$ is a sub-quotient of  $\discg (L)$.
\qed
\end{corollary}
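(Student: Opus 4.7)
The plan is to reduce the corollary to the primitive case (when $M/L$ is torsion-free) that is already handled by Proposition~\ref{prop:discg}, by passing to the saturation of $L$ in $M$. Define
\[
\wt{L}\;:=\;\set{x\in M}{nx\in L\;\;\textrm{for some}\;\;n\in \ZZ_{>0}}.
\]
I would first verify that $\wt{L}$ is a sublattice of $M$ (non-degenerate, since it has the same rational span as $L$ and the form on $L$ is non-degenerate), that $L\subset \wt{L}$ has finite index, and that $M/\wt{L}$ is torsion-free by construction.

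Next I would check the key identity $\wt{L}\sperp=L\sperp$ inside $M$. The inclusion $\wt{L}\sperp\subset L\sperp$ is immediate from $L\subset \wt{L}$. For the reverse inclusion, take $x\in L\sperp$ and $y\in \wt{L}$; choose $n\ge 1$ with $ny\in L$. Then $n\,\intf{x, y}=\intf{x, ny}=0$, and since the form on $M$ takes values in $\ZZ$, we conclude $\intf{x, y}=0$.

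With these two preliminaries in hand, Proposition~\ref{prop:discg} applied to the primitive pair $\wt{L}\subset M$ yields
\[
\discg(L\sperp)\;=\;\discg(\wt{L}\sperp)\;\cong\;\discg(\wt{L}),
\]
while Proposition~\ref{prop:subquotient} applied to the finite-index inclusion $L\subset \wt{L}$ gives that $\discg(\wt{L})$ is a sub-quotient of $\discg(L)$. Composing these two statements shows that $\discg(L\sperp)$ is a sub-quotient of $\discg(L)$, as desired. There is no serious obstacle here: the argument is purely formal, and the only step requiring a moment's care is the equality $\wt{L}\sperp=L\sperp$, which rests on the torsion-freeness of $\ZZ$.
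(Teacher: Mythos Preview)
Your argument is correct and is precisely the intended way to combine Propositions~\ref{prop:subquotient} and~\ref{prop:discg}: pass to the saturation $\wt{L}$ of $L$ in $M$, observe that $\wt{L}\sperp=L\sperp$, apply Proposition~\ref{prop:discg} to the primitive inclusion $\wt{L}\subset M$, and then Proposition~\ref{prop:subquotient} to the finite-index inclusion $L\subset\wt{L}$. The paper leaves this combination implicit, and you have spelled it out accurately.
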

Let $\Arr$ be a nodal real line arrangement,
and let $\intfnull$ be the intersection form on $H_2(X; \ZZ)$.
We put 
\begin{equation}\label{eq:Ker}
\Ker \,\intfnull:=\set{x\in H_2(X; \ZZ)}{\intf{x, y}=0 \;\;\textrm{for any}\;\; y\in H_2(X; \ZZ)}.
\end{equation}
Then $\intfnull$ yields a natural structure of the lattice on 
\[
\barH(X):=H_2(X; \ZZ)/\Ker \intfnull,
\]
and we can calculate the  Gram matrix of $\barH(X)$ by Theorem~\ref{thm:intnumbs}.
In particular, we can calculate the signature and the discriminant group of $\barH(X)$ by Theorem~\ref{thm:intnumbs}.
\subsection{A projective completion of \texorpdfstring{$X$}{X}}\label{subsec:completion}
Let $\PtC$ be the complex projective plane containing $\AtC$
as an affine part.
We put
\[
\ellinf:=\PtC\setminus \AtC.
\]
For $\ell_i(\CC)\in \ArrC$,
we denote by $\tilell_i(\CC)$ the projective completion of $\ell_i(\CC)$ in $\PtC$,
and put
\[
\tilArrC:=
\begin{cases}
\set{\tilell_i(\CC)}{\ell_i(\CC)\in \ArrC}\;\cup\;\{\ellinf\} & \textrm{if $|\Arr|$ is odd}, \\
\set{\tilell_i(\CC)}{\ell_i(\CC)\in \ArrC} & \textrm{if $|\Arr|$ is even.}
\end{cases}
\]
Let $\tilBC$ be the union of the complex projective lines in $\tilArrC$.
Then $\deg \tilBC$ is even.
We consider the morphisms 
\begin{equation}\label{eq:rhopiproj}
\tilX\maprightsp{\tilrho} \tilW \maprightsp{\tilpi} \PtC,
\end{equation}
where  $\tilpi\colon \tilW\to \PtC$ is the double covering whose branch locus is equal to $\tilBC$,
and $\tilrho\colon \tilX\to \tilW$ is the minimal desingularization.
We put
\[
\Lambda_{\infty}:=\tilrho\inv (\tilpi\inv(\ellinf))\;\;\subset\;\; \tilX.
\]
Then we have $X=\tilX\setminus \Lambda_{\infty}$, and 
the inclusion $\iota\colon X\inj \tilX$ induces  natural homomorphisms
\[
\iota_{*}\;\colon\; H_2(X; \ZZ)\to H_2(\tilX; \ZZ) \to H_2(\tilX; \ZZ)_{\free}:=H_2(\tilX; \ZZ)/\mathrm{torsion},
\]
which preserves the intersection form.
Since $\tilX$ is smooth and compact,
the intersection form makes the free $\ZZ$-module $H_2(\tilX; \ZZ)_{\free}$ a unimodular lattice.
Let 
\[
\Hinf\subset H_2(\tilX; \ZZ)_{\free}
\]
 be the submodule generated by 
the classes of  irreducible components of $\Lambda_{\infty}$.
Then the image of $\iota_{*}$
is equal to the orthogonal complement $\Hinf\sperp$ of $\Hinf$ in $H_2(\tilX; \ZZ)_{\free}$ defined by~\eqref{eq:Lsperp}.
Suppose that $\Hinf$ is a sublattice of $H_2(\tilX; \ZZ)$,
that is, the intersection form restricted to  $\Hinf$ is non-degenerate.
Then the intersection form restricted to the image $\Hinf$ of $\iota_{*}$ is also non-degenerate, and hence 
the kernel of $\iota_{*}$ is  equal to $\Ker \, \intfnull$
defined by~\eqref{eq:Ker}.
Therefore the two lattices $\barH(X)$ and $\Hinf\sperp$  are isomorphic:
\begin{equation}\label{eq:HH}
\barH(X)\;\; \cong\;\;  \Hinf\sperp.
\end{equation}
\subsection{The lattices \texorpdfstring{$H_2(\tilX; \ZZ)$}{H2Xtilde} and \texorpdfstring{$\Hinf\sperp$}{Hinftyperp}}
In this section, 
we consider  arrangements $\Arr$ with the following property:
\begin{equation}
\label{eq:parallelassumption}
\parbox{11.5cm}{\;\;
For any   $\lambda_i(\RR)\in \Arr$, at most one line in  $\Arr\setminus\{\lambda_i(\RR)\}$ is  parallel to $\lambda_i(\RR)$.}
\end{equation}
We put $\nopslines:=|\Arr|$, and assume $\nopslines\ge 3$. We then put 
\[
\tilN:=|\tilArrC|=\begin{cases}
\nopslines+1 & \textrm{if $\nopslines$ is odd, }\\
\nopslines & \textrm{if $\nopslines$ is even. }
\end{cases}
\]
Suppose that $\Arr$ has exactly $p$ parallel pairs, where $2p\le \nopslines$.
Then we have
\begin{equation}\label{eq:sizees}
|\Chamb|=\frac{(\nopslines-1)(\nopslines-2)}{2}-p,
\quad 
|\SingBC|=\frac{\nopslines(\nopslines-1)}{2}-p.
\end{equation}
\begin{proposition}\label{prop:Hinf}
Suppose that~\eqref{eq:parallelassumption} holds and that $\nopslines\ge 3$.
Let $p$ be the number of parallel pairs in $\Arr$.
\begin{enumerate}[{\rm (1)}]
\item
The homology group $H_2(\tilX; \ZZ)$ is torsion-free, and 
the  unimodular lattice 
$H_2(\tilX; \ZZ)$  is of rank $\tilN^2-3\tilN +4$ with signature 
\begin{equation*}\label{eq:unimod}
\left(\frac{1}{\,4\,}\tilN^2-\frac{3}{\,2\,}\tilN +3, \; \frac{3}{\,4\,}\tilN^2-\frac{3}{\,2\,}\tilN +1\right).
\end{equation*}
\item
The submodule $\Hinf$  of $H_2(\tilX; \ZZ)$ is  a sublattice of rank
\[
\rinf :=\rank \Hinf=\begin{cases}
 1+\nopslines+2p & \textrm{if $\nopslines$ is odd}, \\
 1+p & \textrm{if $\nopslines$ is even and $\nopslines\ne 2p$}, \\
2+p & \textrm{if $\nopslines=2p$},  \\
\end{cases}
\]
with the signature $(1, \rinf-1)$.
The discriminant group  is given by 
\[
\disc (\Hinf)\cong
\begin{cases}
(\ZZ/2\ZZ)^{\nopslines-1} & \textrm{if $\nopslines$ is odd}, \\
(\ZZ/2\ZZ)^{p+1} & \textrm{if $\nopslines$ is even and $\nopslines\ne 2p$}, \\
(\ZZ/2\ZZ)^{p-1} \times (\ZZ/2(p-1)\ZZ)& \textrm{if $\nopslines=2p$}. \\
\end{cases}
\]
\end{enumerate}
\end{proposition}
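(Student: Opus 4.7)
The plan is to compute the topological and Hodge-theoretic invariants of $\tilX$ via the standard double-cover formulas, then to read off the lattice invariants of $\Hinf$ from the explicit intersection theory of the components of $\Lambda_{\infty}$.

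For part (1), I begin by enumerating the singularities of $\tilBC$: when $\nopslines$ is even, $\tilBC$ is nodal with $\binom{\tilN}{2}$ ordinary nodes; when $\nopslines$ is odd, $\tilBC$ additionally acquires exactly $p$ triple points at infinity, each arising from the concurrence of $\ellinf$ with a parallel pair, so that $\nu_2+3\nu_3=\binom{\tilN}{2}$ with $\nu_3=p$. A direct count gives $e(\tilBC)=2\tilN-\nu_2-2\nu_3$, hence $e(\tilW)=6-e(\tilBC)$ by the double-cover formula. Each node produces an $A_1$-singularity of $\tilW$ whose minimal resolution is a single $(-2)$-curve (Euler contribution $+1$), and each triple point produces a singularity whose minimal resolution is a $D_4$-configuration of four $(-2)$-curves (Euler contribution $+4$), giving $e(\tilX)=\tilN^2-3\tilN+6$ in both parity cases. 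Since minimal resolutions of $A_1$ and $D_4$ are crepant, $\chi(\OOO_{\tilX})=1+\chi(\OOO_{\PtC}(-\tilN/2))=(\tilN^2-6\tilN+16)/8$, and Noether's formula yields $K_{\tilX}^2=12\chi(\OOO_{\tilX})-e(\tilX)=(\tilN-6)^2/2$.

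Simple-connectedness of $\tilX$ combined with Poincar\'e duality then gives torsion-freeness and unimodularity of $H_2(\tilX;\ZZ)$. I would argue $\pi_1(\tilX)=1$ by deformation: by Brieskorn--Tyurina simultaneous resolution for rational double points, $\tilX$ is diffeomorphic to a smooth double cover of $\PtC$ branched along a smooth plane curve of degree $\tilN$, and the latter is simply connected by a Zariski--Lefschetz-type argument (using that $\pi_1(\PtC\setminus B)\cong\ZZ/\tilN$ for smooth $B$ of degree $\tilN$). Hence $\rank H_2(\tilX;\ZZ)=e(\tilX)-2=\tilN^2-3\tilN+4$, and the Hirzebruch signature formula $\sigma(\tilX)=(K_{\tilX}^2-2e(\tilX))/3$ produces the stated signature after elementary algebra.

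For part (2), I enumerate the components of $\Lambda_{\infty}$ case-by-case. When $\nopslines$ is odd, $\ellinf\in\tilArrC$ belongs to the branch locus so $\tilpi\inv(\ellinf)$ is a single $\PP^1$ passing through $\nopslines-2p$ $A_1$'s and $p$ $D_4$'s, yielding $\rinf=1+\nopslines+2p$. When $\nopslines$ is even with $\nopslines\ne 2p$, $\tilpi\inv(\ellinf)$ is a connected double cover of $\PP^1$ branched at the $\nopslines-2p$ unpaired points with $p$ nodes at the parallel-pair points, giving $\rinf=1+p$. When $\nopslines=2p$, $\tilpi\inv(\ellinf)$ is a trivial unramified double cover consisting of two $\PP^1$'s meeting at the $p$ $A_1$'s, which splits after resolution into $\rinf=2+p$ components. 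The Gram matrix is then determined by: every $(-2)$-exceptional squares to $-2$; a smooth curve passing through an $A_1$ transversely meets its $(-2)$-exceptional in one point while a curve with a node at the $A_1$ meets it in two; within a $D_4$-configuration the strict transform of $\tilpi\inv(\ellinf)$ meets only the lateral component corresponding to $\ellinf$; and self-intersections of $L$ (or $L_{\pm}$) are fixed by the identity $(\pi^*\ellinf)^2=2$ together with the orthogonality $\pi^*\ellinf\cdot E=0$ on each exceptional $E$, which determines the expansion $\pi^*\ellinf=2L+(\text{exc.})$ in case (a), $L+\sum e_i$ in case (b), or $L_{+}+L_{-}+\sum e_i$ in case (c). Non-degeneracy of the resulting Gram matrix combined with the positive-square class $\pi^*\ellinf\in\Hinf$ and the Hodge index theorem on $NS(\tilX)_{\RR}$ forces the signature to be $(1,\rinf-1)$, and explicit Smith normal form reductions on the Gram matrix produce the stated discriminant groups.

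The main technical obstacle is the bookkeeping in case (a): at each $D_4$-configuration the pullback formula $\tilrho^*(\tilpi\inv(\ellinf))=L+f_k+g_{k,1}+\tfrac{1}{2}(g_{k,2}+g_{k,3})$ involves $\QQ$-divisor coefficients, and verifying that the resulting glue vectors between $L$ and the $p$ copies of the $D_4$-discriminant group combine to yield exactly $(\ZZ/2)^{\nopslines-1}$, rather than a larger subquotient, requires careful explicit Smith normal form computation. A secondary delicate point is the simple-connectedness argument in the triple-point case, where one needs that simultaneous resolution applies to the $D_4$-degeneration as well as to the $A_1$-degenerations, for which one cites Tyurina's extension of Brieskorn's theorem to arbitrary rational double points.
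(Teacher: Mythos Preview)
Your argument is correct. For part~(1) it is essentially the paper's: both invoke simultaneous resolution of rational double points to identify $\tilX$ diffeomorphically with the double plane branched along a smooth curve of degree~$\tilN$, after which the invariants follow from standard formulas (you compute via $e(\tilX)$, Noether, and the Hirzebruch signature formula; the paper quotes $b_2$ and $p_g$ of the smooth model and writes the signature as $(1+2p_g,\, b_2-1-2p_g)$).

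For part~(2) your component enumeration and intersection data are correct, and the Smith-normal-form route would succeed, but the paper bypasses precisely the computation you flag as the ``main technical obstacle'' in case~(a). Instead of working directly with the Gram matrix on $\Rinf$ and the exceptional curves, the paper introduces the auxiliary sublattice $\Hinf\sprime\subset\Hinf$ generated by the \emph{total transform} $\LLLinf=(\tilpi\circ\tilrho)^{*}\ellinf$ together with all exceptional classes. Since $\intf{\LLLinf,\LLLinf}=2$ and $\LLLinf$ is orthogonal to every exceptional curve, $\Hinf\sprime$ is the orthogonal sum $\langle 2\rangle\oplus A_1^{|\PPPinfa|}\oplus D_4^{|\PPPinfd|}$, with $\discg(\Hinf\sprime)\cong(\ZZ/2\ZZ)^{1+|\PPPinfa|+2|\PPPinfd|}$ read off immediately. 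In case~(a) the multiplicity of $\Rinf$ in $\LLLinf$ is~$2$, so $[\Hinf:\Hinf\sprime]=2$; hence $\discg(\Hinf)$ is a subquotient of order $2^{\nopslines-1}$ of the elementary abelian $2$-group $(\ZZ/2\ZZ)^{\nopslines+1}$, forcing $\discg(\Hinf)\cong(\ZZ/2\ZZ)^{\nopslines-1}$ with no further computation. In case~(b) one has $\Hinf\sprime=\Hinf$ outright. The $\QQ$-divisor coefficients you anticipate never appear, because $\LLLinf$, unlike the Weil-divisor pullback $\tilrho^{*}(\tilpi\inv\ellinf)$, is automatically Cartier and integral. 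Only in case~(c), where $\Rinf$ is reducible and $\Hinf\sprime$ has strictly smaller rank than $\Hinf$, does the paper fall back on a direct Gram-matrix computation, which it leaves to the reader.
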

By~Theorem~\ref{thm:basis},  we have
\[
\rank H_2(X; \ZZ)=|\Chamb|+|\SingBC|=(\nopslines-1)^2-2p.
\]
Suppose that $\nopslines$ is odd $\ge 3$.
By Proposition~\ref{prop:Hinf}, we have 
\[
\rank H_2(\tilX; \ZZ)-\rank \Hinf=\tilN^2-3\tilN+4-(1+N+2p).
\]
Since these two are equal, 
the equality~\eqref{eq:HH} implies  $\Ker \,\intfnull=0$. 
Therefore we obtain the following:
\begin{corollary}
Suppose that~\eqref{eq:parallelassumption} holds and that  $\nopslines$ is odd $\ge 3$.
Then  the intersection form $\intfnull$ on $H_2(X; \ZZ)$ is non-degenerate.
\qed
\end{corollary}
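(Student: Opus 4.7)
The plan is a direct rank comparison, exploiting the setup already assembled in Section~\ref{subsec:completion}. By Theorem~\ref{thm:basis}, $H_2(X;\ZZ)$ is a free $\ZZ$-module of rank $|\Chamb|+|\SingBC|$, which by~\eqref{eq:sizees} equals $(\nopslines-1)^2-2p$. On the other side, Proposition~\ref{prop:Hinf} tells me that $\Hinf$ is a sublattice of the unimodular lattice $H_2(\tilX;\ZZ)$, so the intersection form is non-degenerate on $\Hinf$, and the discussion ending with~\eqref{eq:HH} applies. Thus I may invoke the identification $\barH(X)\cong \Hinf\sperp$ from~\eqref{eq:HH}.

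Next I compute $\rank\Hinf\sperp=\rank H_2(\tilX;\ZZ)-\rank\Hinf$ from Proposition~\ref{prop:Hinf}, substituting $\tilN=\nopslines+1$ because $\nopslines$ is odd. This is a short arithmetic check: $(\nopslines+1)^2-3(\nopslines+1)+4-(1+\nopslines+2p)$ simplifies to $(\nopslines-1)^2-2p$. The two ranks therefore agree, and this is the only numerical content of the argument.

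Finally I draw the conclusion from the rank equality together with freeness. The natural homomorphism $\iota_*\colon H_2(X;\ZZ)\to H_2(\tilX;\ZZ)_{\free}$ has image exactly $\Hinf\sperp$ (as stated in Section~\ref{subsec:completion}), and since the non-degeneracy of $\intfnull$ on $\Hinf$ guarantees $\Ker\,\iota_*=\Ker\,\intfnull$, the matching ranks force $\iota_*$ restricted to its image $\Hinf\sperp$ to be a surjection between two free $\ZZ$-modules of the same rank. A surjection of free $\ZZ$-modules of equal finite rank is injective, so $\Ker\,\intfnull=0$, which is exactly the non-degeneracy claim.

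There is no real obstacle: the substantive work is already packaged in Theorem~\ref{thm:basis} (which gives freeness and the basis for rank counting) and in Proposition~\ref{prop:Hinf} (which gives the rank and signature of $\Hinf\subset H_2(\tilX;\ZZ)$ and the necessary sublattice property). The proof of the corollary is then a one-line arithmetic verification that the ranks agree, combined with the standard fact that an $\iota_*$ with trivial mod-torsion kernel is actually injective on a free module.
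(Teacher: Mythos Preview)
Your proposal is correct and follows essentially the same route as the paper: compute $\rank H_2(X;\ZZ)=(\nopslines-1)^2-2p$ from Theorem~\ref{thm:basis} and~\eqref{eq:sizees}, compute $\rank H_2(\tilX;\ZZ)-\rank\Hinf$ from Proposition~\ref{prop:Hinf} with $\tilN=\nopslines+1$, check the two agree, and conclude $\Ker\,\intfnull=0$ via~\eqref{eq:HH}. Your added justification that a surjection of free $\ZZ$-modules of equal finite rank is injective is just an explicit unpacking of the paper's final clause.
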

\begin{proof}[Proof of Proposition~\ref{prop:Hinf}]
Let $\PPPinf$ denote the set of 
singular points of $\tilBC$ located on $\ellinf$.
By assumption~\eqref{eq:parallelassumption}, 
it follows that $\PPPinf$ consists of simple singular points of type $a_1$ (ordinary double points)
and/or  of type $d_4$ (ordinary triple points).
Let 
\[
\PPPinf=\PPPinfa\sqcup \PPPinfd
\]
be the decomposition of $\PPPinf$ according to the types.
We have the following.
\begin{itemize}
\item If $\nopslines$ is odd, then $|\PPPinfa|=\nopslines-2p$ and $|\PPPinfd|=p$.  
\item If $\nopslines$ is even, then $|\PPPinfa|=p$ and $|\PPPinfd|=0$.  
\end{itemize}
In particular, the branch curve  $\tilBC$ 
of the double covering $\tilpi\colon \tilW\to \PtC$ 
has only simple singularities,  and hence $\tilW$ has only rational double points 
as its singularities.
By the theory of simultaneous resolution of rational double points of algebraic surfaces,
we see that $\tilX$ is diffeomorphic to the double cover $\tilX\sprime$ of $\PtC$ branching along 
a \emph{smooth} projective plane curve of degree $\tilN$.
Since $H_2(\tilX\sprime; \ZZ)$ is torsion-free, so is $H_2(\tilX; \ZZ)$.
We can calculate the second Betti number $b_2$ and 
the geometric genus $p_g$ of $\tilX\sprime$ easily.
Note that the signature of $H_2(\tilX\sprime; \ZZ)$ is
\[
(1+2p_g, b_2-1-2p_g).
\]
Therefore we obtain a proof of assertion (1).
\par
For $P\in \PPPinfa$,
let $a(P)$ denote the 
smooth rational curve on $\tilX$ that is contracted to $P$.
We have $\intf{a(P), a(P)}=-2$.
For $Q\in \PPPinfd$,
there exist four smooth rational curves 
\[
d_1(Q), \;\; d_2(Q),\;\; d_3(Q),\;\; d_4(Q)
\]
on $\tilX$ that are contracted to $Q$.
These curves have the self-intersection  $(-2)$,
and form the Coxeter-Dynkin diagram  of type $D_4$ as the dual graph.
Let $\Rinf$ denote  the reduced part of the strict transform of $\ellinf$ in $\tilX$.
Note that  $\Rinf$ is irreducible if and only if $\nopslines\ne 2p$.
Then $\Hinf$ is generated by 
\begin{itemize}
\item[(a)]  the classes of $a(P)$, where $P$ runs though $\PPPinfa$,
\item[(d)] the classes of $d_1(Q), \dots, d_4(Q)$, where   $Q$ runs though $\PPPinfd$,
and 
\item[(R)]  the classes of irreducible components of $\Rinf$.
\end{itemize}
Let $\Hinf\sprime$ be the submodule of $\Hinf$ generated by the classes  in (a) and (d) above, and 
\begin{itemize}
\item[(L)]  the class of the total transform $\LLLinf$ of $\ellinf$ in $\tilX$.
\end{itemize}
Since we have $\intf{\LLLinf, \LLLinf}=2$ and $\LLLinf$ is orthogonal to the classes of exceptional curves in (a) and (d) above, 
it follows  that $\Hinf\sprime$ is a lattice with 
\[
\rank \Hinf\sprime=1+|\PPPinfa|+4|\PPPinfd|,
\quad
\disc \Hinf\sprime\cong (\ZZ/2\ZZ)^{1+|\PPPinfa|+2|\PPPinfd|}.
\]
\par
Suppose that $\nopslines$ is odd.
Then $\Rinf$ is irreducible
and the  multiplicity of $\Rinf$ in $\LLLinf$ is $2$.
Hence $ \Hinf\sprime$ is a submodule of index $2$ in $ \Hinf$.
Therefore $ \Hinf$ is a lattice,
and the rank and the discriminant group of $ \Hinf$ can be derived from those of $ \Hinf\sprime$.
\par
Suppose that $\nopslines$ is even and $2p<\nopslines$.
Then $\Rinf$ is irreducible
and the  multiplicity of $\Rinf$ in $\LLLinf$ is $1$.
Hence we have $ \Hinf\sprime= \Hinf$.
\par
Suppose that $\nopslines=2p$.
Then $\Rinf$ is a disjoint union of two smooth rational curves
$\Rinfa$ and $\Rinfb$.
For $P\in \PPPinfa$,
we have $\intf{a(P), \Rinfa}=\intf{a(P), \Rinfb}=1$.
Using 
\[
\LLLinf=\Rinfa+\Rinfb+\sum_{P\in \PPPinfa} a(P)
\]
and  $\intf{\LLLinf, \LLLinf}=2$,  we obtain 
\[
\intf{\Rinfa, \Rinfa}=\intf{\Rinfb, \Rinfb}=1-p.
\]
By these formulas, we see that $\Hinf$ is a lattice of rank $2+|\PPPinfa|$.
From the Gram matrix of $\Hinf$, we can compute the discriminant group.
The computation is left to the reader.
%
\par
Since $\Hinf$ is contained in the Hodge part $H^{1,1}(\tilX)$ of $H^{2}(\tilX; \CC)$ and $\LLLinf\in \Hinf$ is an ample class,
we see that the signature of $\Hinf$ is $(1, \rinf-1)$.
\end{proof}
\subsection{Experiments}
For fixed $\nopslines\ge 3$ and $p\le \nopslines/2 $,
we randomly generate a nodal real line arrangement $\Arr$ satisfying~\eqref{eq:parallelassumption}, 
and  calculate the signature and the discriminant group of $ \barH(X)$ 
topologically  by Theorem~\ref{thm:intnumbs}.
We compare the result with  the signature  and  the discriminant group of $\Hinf$
computed by Proposition~\ref{prop:Hinf}. 
We see that the equality of signatures holds for $\barH(X)$ and $\Hinf\sperp$, as is expected from~\eqref{eq:HH}, 
and that $ \disc(\barH(X))$ is a sub-quotient of $ \disc(\Hinf)$, as is expected by~\eqref{eq:HH} and  Corollary~\ref{cor:subquot}.
\begin{remark}
In fact, 
for all examples we computed, we obtain an isomorphism $ \disc(\barH(X))\cong \disc(\Hinf)$,
and hence, in these cases,  the sublattice $\Hinf$ is primitive in $H_2(\tilX; \ZZ)$.
\end{remark}
\begin{example}
Suppose that $\nopslines=6$ and $p=0$.
We have 
\[
|\Chamb|=10,
\quad
|\SingBC|=15, 
\]
and hence $H_2(X; \ZZ)$ is of rank $25$.
On the other hand, the unimodular lattice $H_2(\tilX; \ZZ)$ is of rank $22$ with signature $(3, 19)$, and 
the sublattice  $H_{\infty}$ is of rank $1$ with signature $(1, 0)$
and   $\disc (H_{\infty})\cong \ZZ/2\ZZ$.
Hence $H_{\infty}\sperp$ 
is of rank $21$ with signature $(2, 19)$.
\par
For randomly generated arrangements satisfying~\eqref{eq:parallelassumption}
with $\nopslines=6$ and $p=0$, we checked that 
$\barH(X)$ is of rank $21$ with signature $(2, 19)$
and $\disc (\barH(X))\cong \ZZ/2\ZZ$.
Remark that 
there exist several combinatorial structures of nodal arrangements of six real lines with no parallel pairs.
For example, 
the numbers of $n$-gons in $\Chamb$ can vary as follows:
\[
\begin{array}{c|cccc} 
n & 3 & 4 & 5 & 6  \\ 
 \hline
 & 4 & 4 & 2 & 0 \\ 
 & 4 & 5 & 1 & 0 \\ 
 & 4 & 5 & 0 & 1 \\ 
 & 4 & 6 & 0 & 0 \\ 
 & 5 & 3 & 2 & 0 \\ 
 & 5 & 4 & 1 & 0 
 \end{array}
\qquad
\begin{array}{c|cccc} 
n & 3 & 4 & 5 & 6  \\ 
 \hline
 & 5 & 4 & 0 & 1 \\ 
 & 6 & 2 & 2 & 0 \\ 
 & 6 & 3 & 1 & 0 \\ 
 & 6 & 3 & 0 & 1 \\ 
 & 7 & 0 & 3 & 0\rlap{\;\;.} \\
 &   &    &    &
\end{array}
\]
\end{example}
\begin{example}
Suppose that $\nopslines=6$ and $p=3$.
We have 
\[
|\Chamb|=7,
\quad
|\SingBC|=12, 
\]
and hence $H_2(X; \ZZ)$ is of rank $19$.
On the other hand,
the unimodular lattice $H_2(\tilX; \ZZ)$ is of rank $22$ with signature $(3, 19)$, and 
 the lattice $H_{\infty}$ is of rank $5$ with signature $(1, 4)$
and   $\disc (H_{\infty})\cong ( \ZZ/2 \ZZ)^2\times ( \ZZ/4 \ZZ)$.
Hence $H_{\infty}\sperp$ 
is of rank $17$ with signature $(2, 15)$.
\par
For randomly generated such arrangements, we checked that 
$\barH(X)$ is of rank $17$ with signature $(2, 15)$
and  $\disc (\barH(X))\cong ( \ZZ/2 \ZZ)^2\times ( \ZZ/4 \ZZ)$,
regardless of combinatorial structures of $\Chamb$.
\end{example}
\begin{example}
Suppose that $\nopslines=24$ and $p=10$.
We have 
\[
|\Chamb|=243,
\quad
|\SingBC|=266, 
\]
and hence $H_2(X; \ZZ)$ is of rank $509$.
On the other hand,
the unimodular lattice $H_2(\tilX; \ZZ)$ is of rank $508$ with signature $(111, 397)$, and 
 the sublattice $H_{\infty}$ is of rank $11$ with signature $(1, 10)$
and  $\disc(H_{\infty})\cong ( \ZZ/2 \ZZ)^{11}$.
Hence $H_{\infty}\sperp$ 
is of rank $497$ with signature $(110, 387)$.
For randomly generated such arrangements, we obtained expected signature $(110, 387)$,
and  $\disc(\barH(X))\cong ( \ZZ/2 \ZZ)^{11}$.
\end{example}
\subsection*{Acknowledgment}
We thank Professor Masahiko Yoshinaga for his valuable discussions.
We also thank Professor Eugenii Shustin for introducing us to~\cite{Shustin1990}, 
and the referees for their many helpful comments and suggestions on the first version of this paper.
%
%

\end{document}